\newcommand{\pU}{p\mathcal U}
\newcommand{\qU}{q\mathcal U}
\newcommand{\qRU}{\ddot{\mathcal Q}}
\newcommand{\IR}{\mathbb R}
\newcommand{\IZ}{\mathbb Z}
\newcommand{\U}{\mathcal U}
\newcommand{\V}{\mathcal V}
\newcommand{\W}{\mathcal W}
\newcommand{\N}{\mathcal N}
\newcommand{\IQ}{\mathbb Q}
\newcommand{\Ra}{\Rightarrow}
\newcommand{\IN}{\mathbb N}
\newcommand{\w}{\omega}
\newcommand{\FU}{{\mathcal Q}}
\newcommand{\rcluwL}{{}^\circ\kern-2pt\overline{uw}L}
\newcommand{\Hs}{H\kern-2pt s}
\newcommand{\St}{\mathcal{S}t}
\newcommand{\Rpsi}{\overset{...}{\kern-2pt\psi}}
\newcommand{\Tb}{\overset{...}{\kern-2pt b}}
\newcommand{\supp}{\mathrm{supp}}
\newcommand{\F}{\mathcal F}
\newcommand{\A}{\mathcal A}
\newcommand{\dc}{dc}
\newtheorem{theorem}{Theorem}[section]
\newtheorem{corollary}[theorem]{Corollary}
\newtheorem{proposition}[theorem]{Proposition}
\newtheorem{problem}[theorem]{Problem}
\newtheorem{question}[theorem]{Question}
\newtheorem{claim}[theorem]{Claim}
\newtheorem{lemma}[theorem]{Lemma}
\theoremstyle{definition}
\newtheorem{definition}[theorem]{Definition}
\newtheorem{example}[theorem]{Example}
\title[The submetrizablity and $i$-weight of paratopological groups]{On the submetrizability number and $i$-weight\\ of quasi-uniform spaces and paratopological groups}
\author{Taras Banakh and Alex Ravsky}
\address{T.Banakh: Ivan Franko National University of Lviv (Ukraine), and Jan Kochanowski University in Kielce (Poland)}
\email{t.o.banakh@gmail.com}
\address{A.Ravsky: Pidstryhach Institute for Applied Problems of Mechanics and Mathematics of National Academy of Sciences, Lviv, Ukraine}
\email{oravsky@mail.ru}
\keywords{Submetrizable space, $i$-weight, pre-uniformity, quasi-uniformity, paratopological group, topological monoid}
\subjclass{54D10; 54D15; 54E15; 22A30}
\thanks{The first author has been partially financed by NCN grant DEC-2012/07/D/ST1/02087.}
\begin{document}
\begin{abstract} We derive many upper bounds on the submetrizability number and $i$-weight of paratopological groups and topological monoids with open shifts. In particular, we prove that each first countable Hausdorff paratopological group is submetrizable thus answering a problem of Arhangelskii posed in 2002. Also we construct an example of a zero-dimensional (and hence regular) Hausdorff paratopological abelian group $G$ with countable pseudocharacter which is not submetrizable. In fact, all results on the $i$-weight and submetrizability are derived from more general results concerning normally quasi-uniformizable and bi-quasi-uniformizable spaces.
\end{abstract}
\maketitle

\section*{Introduction}

This paper was motivated by the following problem of Arhangelskii \cite[3.11]{Ar} (also repeated by Tkachenko in his survey \cite[2.1]{Tka}):
{\em Does every first countable Hausdorff paratopological group admit a weaker metrizable topology?}
A surprisingly simple answer to this problem was given by the authors in \cite{BR}. We just observed that each Hausdorff paratopological group $G$ carries a natural uniformity generated by the base consisting of entourages $\{(x,y)\in G\times G:y\in UxU^{-1}\cap U^{-1}xU\}$ where $U$ runs over open neighborhoods of the unit $e$ in $G$. In \cite{BR} this uniformity was called the {\em quasi-Roelcke uniformity} on $G$ and denoted by $\FU$. If $G$ is first-countable, then the quasi-Roelcke uniformity $\FU$ is metrizable, which implies that the space $G$ is submetrizable.
Moreover, if the quasi-Roelcke uniformity $\FU$ is $\w$-bounded, then the topology generated by the uniformity $\FU$ is metrizable and separable, which implies that $G$ has countable $i$-weight, i.e., admits a continuous injective map onto a metrizable separable space.

In fact, for the submetrizability of $G$ it suffices to require the countability of the pseudocharacter $\psi(\FU)$ of $\FU$, i.e., the existence of a countable subfamily $\U\subset\FU$ such that $\bigcap\U=\Delta_X$. So, the aim of the paper is to detect paratopological groups $G$ whose quasi-Roelcke uniformity $\FU$ has countable pseudocharacter. For this we shall find some upper bounds on the pseudocharacter $\psi(\FU)$. These bounds will give us upper bounds on the submetrizability number $sm(G)$ and the $i$-weight $iw(G)$ of a paratopological group $G$. In fact, the obtained upper bounds on $sm(G)$ and $iw(G)$ have uniform nature and depends on the properties of the two canonical quasi-uniformities $\mathcal L$ and $\mathcal R$ on $G$ called the left and right quasi-uniformities of $G$. These quasi-uniformities are studied in Sections~\ref{s6} and \ref{s7}. In Sections~\ref{s3} and \ref{s4} we study properties of topological spaces whose topology is generated by two quasi-uniformities which are compatible in some sense (more precisely, are $\pm$-subcommuting or normally $\pm$-subcommuting). In Section~\ref{s4} we prove that any two normally $\pm$-subcommuting quasi-uniformities are normal in the sense of \cite{BR}. This motivates the study of topological spaces whose topology is generated by a normal quasi-uniformity. For such spaces we obtain some upper bounds on the $i$-weight, which is done in Section~\ref{s4}. Section~\ref{s1} has preliminary character. It contains the necessary information of topological spaces, quasi-uniform spaces, and their cardinal characteristics. In Section~\ref{s8} we present two counterexamples to some natural conjectures concerning submetrizable paratopological groups.

\section{Preliminaries}\label{s1}

In this section we collect known information on topological spaces, quasi-uniformities, and their cardinal characteristics. For a set $X$ by $|X|$ we denote its cardinality. By $\w$ we denote the set of all finite ordinals and by $\IN=\w\setminus\{0\}$ the set of natural numbers.

For a cardinal $\kappa$ by $\log(\kappa)$ we denote the smallest cardinal $\lambda$ such that $2^\lambda\ge \kappa$.

\subsection{Topological spaces and their cardinal characteristics}\label{s11}

For a subset $A$ of a topological space $X$ by $\overline{A}$ and $A^\circ$ and $\overline{A}^\circ$ we  denote the closure, interior and interior of the closure of the set $A$ in $X$, respectively.

A family $\mathcal N$ of subsets of a topological space $X$ is called a {\em network} of the topology of $X$ if each open set $U\subset X$ can be written as the union $\bigcup\U$ of some subfamily $\U\subset \mathcal N$. If each set $N\in\mathcal N$ is open in $X$, then $\mathcal N$ is a {\em base} of the topology of $X$.

A subset $D$ of a topological space $X$ is called {\em strongly discrete} if each point $x\in D$ has a neighborhood $U_x\subset X$ such that the family $(U_x)_{x\in D}$ is {\em discrete} in the sense that each point $z\in X$ has a neighborhood that meets at most one set $U_x$, $x\in D$. It is easy to see that each strongly discrete subset of (a $T_1$-space) $X$ is discrete (and closed) in $X$. A topological space $X$ is called ({\em strongly}) {\em $\sigma$-discrete} if $X$ can be written as the countable union $X=\bigcup_{n\in\w}X_n$ of (strongly) discrete subsets of $X$.
\smallskip

A topological space $X$ is called
\begin{itemize}
\item {\em Hausdorff\/} if any two distinct points $x,y\in X$ have disjoint open neighborhoods $O_x\ni x$ and $O_y\ni y$;
\item {\em collectively Hausdorff} if each closed discrete subset of $X$ is strongly discrete in $X$;
\item {\em functionally Hausdorff\/} if for any two distinct points $x,y\in X$ there is a continuous function $f:X\to\IR$ such that $f(x)\ne f(y)$;
\item {\em regular} if for any  point $x\in X$ and a neighborhood $O_x\subset X$ there is a neighborhood $V_x\subset X$ of $x$ such that $\overline{V}_x\subset O_x$;
\item {\em completely regular} if for any  point $x\in X$ and a neighborhood $O_x\subset X$ there is a continuous function $f:X\to [0,1]$ such that $f(x)=0$ and $f^{-1}\big([0,1)\big)\subset O_x$;
\item {\em quasi-regular} if each non-empty open set $U\subset X$ contains the closure $\overline{V}$ of another non-empty open set $V\subset X$;
\item {\em submetrizable} if $X$ admits a continuous metric (or equivalently, admits a continuous injective map into a metrizable space).
\end{itemize}
It is clear that each submetrizable space is functionally Hausdorff.

In Section~\ref{s8} will shall need the following property of strongly $\sigma$-discrete spaces.

\begin{proposition}\label{p1.1} Each strongly $\sigma$-discrete Tychonoff space $X$ is zero-dimensional and submetrizable. Moreover, $X$ admits an injective continuous map into the Cantor cube $\{0,1\}^\kappa$ of weight $\kappa=\log(|X|)$.
\end{proposition}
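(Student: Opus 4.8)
The plan is to build everything out of the decomposition $X=\bigcup_{n\in\w}X_n$ into strongly discrete sets together with the complete regularity of $X$. First I would normalize the data. Since every subset of a strongly discrete set is strongly discrete, I may assume the $X_n$ are pairwise disjoint, so every point has a well-defined \emph{level}. Each $X_n$ is closed (a strongly discrete subset of a $T_1$-space is closed), so for each $n$ I can fix a discrete --- hence pairwise disjoint --- family $(U^n_x)_{x\in X_n}$ of open sets with $x\in U^n_x$, and, shrinking each $U^n_x$ with the help of regularity, also arrange $\overline{U^n_x}\cap X_k=\emptyset$ for all $k<n$. Throughout I will use the standard facts that for a discrete family $(A_i)_{i\in I}$ one has $\overline{\bigcup_iA_i}=\bigcup_i\overline{A_i}$ and $(\overline{A_i})_{i\in I}$ is again discrete (so $\overline{A_i}\cap\overline{A_j}=\emptyset$ for $i\ne j$), and that a discrete family stays discrete after replacing its members by subsets; consequently, for $x\in X_n$, $\overline{U^n_x}$ meets $\bigcup_{y\in X_n}U^n_y$ only in $U^n_x$, whence $\overline{U^n_x}\setminus U^n_x\subseteq\bigcup_{k>n}X_k$.

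\emph{Submetrizability.} For each $n$ and $x\in X_n$ pick, by complete regularity, a continuous $f^n_x\colon X\to[0,1]$ with $f^n_x(x)=1$ and $f^n_x\equiv0$ off $U^n_x$, and set $\rho_n(a,b)=\sup_{x\in X_n}|f^n_x(a)-f^n_x(b)|$. Discreteness of $(U^n_x)_x$ makes this supremum, near any point of $X\times X$, a maximum of at most two of the pseudometrics $|f^n_x(\cdot)-f^n_x(\cdot)|$, so $\rho_n$ is a continuous pseudometric bounded by $1$. If $a\ne b$ lie on levels $n\le m$: when $n=m$ then $\rho_n(a,b)\ge|f^n_a(a)-f^n_a(b)|=1$, and when $n<m$ then $a\notin U^m_b$ (since $\overline{U^m_b}\cap X_n=\emptyset$), so $\rho_m(a,b)\ge|f^m_b(a)-f^m_b(b)|=1$. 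Hence $d:=\sum_{n\in\w}2^{-n}\rho_n$ is a continuous metric on $X$, so $X$ is submetrizable.

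\emph{Zero-dimensionality (the crux).} Given $x$ on level $n$ and an open $O\ni x$, I must produce a clopen $V$ with $x\in V\subseteq O$. Using regularity I first shrink $O$ so that $\overline O\cap(X_0\cup\dots\cup X_{n-1})=\emptyset$; then only the levels $\ge n$ matter. The naive recursion --- repeatedly absorb into the current approximation the boundary points lying on the next level, then take the union --- fails because the union need not be closed. The remedy, modelled on the classical proof that a countable regular space is zero-dimensional (where one ``decides'' one point per step), is to carry along a \emph{decreasing} family of outer open sets. So I would construct open sets $V^{(0)}\subseteq V^{(1)}\subseteq\cdots$ and $G^{(0)}\supseteq G^{(1)}\supseteq\cdots$ with $x\in V^{(0)}$, $\overline{V^{(j)}}\subseteq G^{(j)}\subseteq O$, $\overline{V^{(j)}}\setminus V^{(j)}\subseteq\bigcup_{k>n+j}X_k$, and such that every point on level $n+j$ lies in $V^{(j)}$ or outside $\overline{G^{(j)}}$ (``is decided by stage $j$''). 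One starts with $V^{(0)}=U^n_x$ (shrunk into $O$) and $G^{(0)}=O\setminus\overline{\bigcup_{y\ne x}U^n_y}$; at stage $j+1$ one inspects the points $p$ on level $n+j+1$ lying in $\overline{G^{(j)}}\setminus V^{(j)}$, adjoins to $V^{(j)}$ suitably small neighbourhoods $U^{n+j+1}_p$ (shrunk into $G^{(j)}$ and away from levels $\le n+j$) of those $p$ that lie in $\overline{V^{(j)}}$, and removes from $G^{(j)}$ the closures of such neighbourhoods of the $p$ outside $\overline{V^{(j)}}$ (shrinking those off $\overline{V^{(j)}}$ so that $\overline{V^{(j+1)}}\subseteq G^{(j+1)}$ is preserved); the discrete-family facts above keep all invariants true. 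Finally $V:=\bigcup_jV^{(j)}$ is open, contains $x$, and lies in $G^{(0)}\subseteq O$; and it is closed, since any $z\in\overline V$ lies on some level $n+j_0$, was decided at stage $j_0$, and cannot have been decided ``out'': indeed $V^{(m)}\subseteq G^{(j_0)}$ for \emph{all} $m$ (using $\overline{V^{(m)}}\subseteq G^{(m)}\subseteq G^{(j_0)}$ for $m\ge j_0$), so $V\subseteq G^{(j_0)}$, and a neighbourhood of $z$ missing $G^{(j_0)}$ would miss $V$; hence $z\in V^{(j_0)}\subseteq V$. Thus $V$ is clopen, and $X$ is zero-dimensional.

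\emph{The map into the Cantor cube.} If $X$ is finite it is discrete and the claim is trivial, so assume $X$ infinite; then $\kappa=\log(|X|)\ge\w$. Using zero-dimensionality, replace each $U^n_x$ by a \emph{clopen} neighbourhood $V^n_x\subseteq U^n_x$ of $x$ that also misses $X_0\cup\dots\cup X_{n-1}$; the families $(V^n_x)_{x\in X_n}$ remain discrete and consist of clopen sets, so every union $\bigcup_{x\in S}V^n_x$ with $S\subseteq X_n$ is clopen (its closure is $\bigcup_{x\in S}\overline{V^n_x}=\bigcup_{x\in S}V^n_x$). Fix an injection $e\colon X\to\{0,1\}^\kappa$ (possible since $|X|\le2^\kappa$), and for $n\in\w$, $\alpha<\kappa$ put $C^n:=\bigcup_{x\in X_n}V^n_x$ and $C^n_\alpha:=\bigcup\{V^n_x:x\in X_n,\ e(x)(\alpha)=1\}$; these are at most $\kappa$ clopen sets, and they separate points: for $z\ne z'$ on levels $n\le m$, if $n<m$ then $z'\in C^m$ but $z\notin C^m$ (each $V^m_y$ meets $X_0\cup\dots\cup X_m$ only in $y$, and $z$ is on level $n<m$), while if $n=m$ then $e(z)\ne e(z')$, so some $C^n_\alpha$ contains exactly one of $z,z'$. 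Sending each point to its pattern of membership in these clopen sets is therefore a continuous injection of $X$ into $\{0,1\}^I$ with $|I|\le\kappa$, hence into $\{0,1\}^\kappa$. The only genuinely delicate step is the zero-dimensionality argument; once it is available, the submetrizability and the embedding are just bookkeeping with discrete families.
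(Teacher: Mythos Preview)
Your argument is correct, but the route is genuinely different from the paper's, particularly for zero-dimensionality. The paper does not build a clopen neighborhood by hand: instead it observes that strong discreteness lets one extend every bounded continuous function from $X_n$ to $X$, so the closure $\bar X_n$ in $\beta X$ equals $\beta X_n$ and hence has covering dimension $0$; then the Countable Sum Theorem for $\dim$ in normal spaces gives $\dim\big(\bigcup_n\bar X_n\big)=0$, and $X$ inherits zero-dimensionality. With zero-dimensionality in hand, the paper chooses the $U^n_x$ clopen from the start and defines $\{0,1\}$-valued pseudometrics directly, then gets the map into $\{0,1\}^\kappa$ by first injecting $X$ continuously into a countable product $\prod_nD_n$ of discrete spaces with $|D_n|\le|X|$ and then injecting each $D_n$ into $\{0,1\}^\kappa$. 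Your approach trades the heavy machinery (Stone--\v Cech, dimension theory) for a longer but entirely elementary recursion modelled on ``countable regular $\Rightarrow$ zero-dimensional'', deciding one level at a time while shrinking an outer open set; this is self-contained and would survive in contexts where the Countable Sum Theorem is unavailable, at the cost of more bookkeeping. One small imprecision: at the base step you should shrink $V^{(0)}$ so that $\overline{V^{(0)}}\subseteq U^n_x\cap O$ (not merely into $O$), which is what actually yields both $\overline{V^{(0)}}\subseteq G^{(0)}$ and $\overline{V^{(0)}}\setminus V^{(0)}\subseteq\bigcup_{k>n}X_k$; with that adjustment all invariants propagate as you describe.
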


\begin{proof} The proposition trivially holds if $X$ is discrete. So, we assume that $X$ is not discrete and hence infinite. Write $X$ as the countable union $X=\bigcup_{n\in\w}X_n$ of pairwise disjoint strongly discrete non-empty subsets $X_n$ of $X$. Let $\beta X$ be the Stone-\v Cech compactification of $X$. Using the strong discreteness of each $X_n$, we can extend each continuous bounded function $f:X_n\to \IR$ to a continuous bounded function on $X$. This implies that the closure $\bar X_n$ of $X_n$ in $\beta X$ is homeomorphic to the Stone-Cech compactification $\beta X_n$ of the discrete space $X_n$ and hence has covering dimension $\dim(\beta X_n)=0$ (see \cite[3.6.7 and 7.1.17]{Eng}). By the Countable Sum Theorem \cite[3.1.8]{End} for covering dimension in normal spaces, the $\sigma$-compact (and hence normal) space $Z=\bigcup_{n\in\w}\overline{X}_n$ has covering dimension $\dim(Z)=0$, which implies that its subspace $X=\bigcup_{n\in\w}X_n$ is zero-dimensional.

Now we prove that $X$ is submetrizable. For every $n\in\w$ and every $x\in X_n$ we can choose a closed-and-open neighborhood $U_x\subset X$ of $x$ such that $U_x\cap \bigcup_{k<n}X_k=\emptyset$ and the indexed family $(U_x)_{x\in X_n}$ is discrete in $X$. Then the union $\bigcup_{x\in X_n}U_x$ is a closed-and-open subset in $X$ and the function $d_n:X\times X\to\{0,1\}$ defined by
$$d_n(x,y)=\begin{cases}
0,&\mbox{if $x,y\in U_x$ for some $x\in X_n$ or $x,y\notin\bigcup_{z\in X_n}U_z$},\\
1,&\mbox{otherwise,}
\end{cases}$$
is a continuous pseudometric on $X$.
Consequently, the function $d=\max_{n\in\w}\frac1{2^n}d_n$ is a continuous metric on $X$, which implies that $X$ is submetrizable.

It follows that the space $X$ admits a continuous injective map into the countable product $\prod_{n\in\w}D_n$ of discrete spaces $D_n$ of cardinality $|D_n|=1+|X_n|\le |X|$. By definition of the cardinal $\kappa=\log(|X|)$, every discrete space $D_n$, $n\in\w$, admits an injective (and necessarily continuous) map into the Cantor cube $\{0,1\}^\kappa$. Then $\prod_{n\in\w}D_n$ and hence $X$ also admits a continuous injective map into $\{0,1\}^\kappa$.
\end{proof}

For a cover $\U$ of a set $X$ and a subset $A\subset X$ we put $\St^0(A;\U)=A$ and $\St^{n+1}(A;\U)=\bigcup\{U\in\U:U\cap\St^n(A;\U)\ne\emptyset\}$ for $n\ge 0$.

%Let $\mathcal P$ be a property of topological spaces. Following \cite{vMTW}, we shall say that a topological space $X$ is
%\begin{itemize}
%\item {\em star $\mathcal P$} of for any open cover $\U$ there exist a subspace $Y\subset X$ with property $\mathcal P$ such that $X=\St(Y;\U)$;
%\item {\em dually $\mathcal P$} if for any family $\{U_x\}_{x\in X}$ of open sets $U_x\ni x$ there is a subspace $Y\subset X$ with property $\mathcal P$ such that $X=\bigcup_{y\in Y}U_y$.
%\end{itemize}

\subsection{Cardinal characteristics of topological spaces, I}\label{s12}
For a topological space $X$ let
\begin{itemize}
\item $nw(X)=\min\{|\mathcal N|:\mathcal N$ is a network of the topology of $X\}$ be the {\em network weight} of $X$;
\item $d(X)=\min\{|A|:A\subset X,\;\overline{A}=X\}$ be the {\em density} of $X$;
\item $hd(X)=\sup\{d(Y):Y\subset X\}$ the {\em hereditary density} of $X$;
\item $s(X)=\sup\{|D|:D$ is a discrete subspace of $X\}$ be the {\em spread} of $X$;
\item $e(X)=\sup\{|D|:D$ is a closed discrete subspace of $X\}$ be the {\em extent} of $X$;
\item $c(X)=\sup\{|\U|:\U$ is a disjoint family of non-empty open sets in $X\}$ be the {\em cellularity} of $X$;
\item $de(X)=\sup\{|\U|:\U$ is a discrete family of non-empty sets in $X\}$ be the {\em discrete extent} of $X$;
\item $\dc(X)=\sup\{|\U|:\U$ is a discrete family of non-empty open sets in $X\}$ be the {\em discrete cellularity} of $X$;
    \smallskip
\item $l(X)$, the {\em Lindel\"of number} of $X$, be the smallest cardinal $\kappa$ such that each open cover $\U$ of $X$ has a subcover $\V\subset\U$ of cardinality $|\V|\le\kappa$;
\item $\bar l(X)$, the {\em weak Lindel\"of number} of $X$, be the smallest cardinal $\kappa$ such that each open cover $\U$ of $X$ contains a subcollection $\V\subset\U$ of cardinality $|\V|\le\kappa$ with dense union $\bigcup\V$ in $X$;
\item $l^*(X)$, the {\em weak extent} of $X$, be the smallest cardinal $\kappa$ such that for each open cover $\U$ of $X$ there is a subset $A\subset X$ of cardinality $|A|\le\kappa$ such that $X=\St(A;\U)$.
\end{itemize}
The cardinal characteristics $nw,d,s,e,c,l$ are well-known in General Topology (see \cite{Eng}, \cite{Hod}) whereas $\bar l$, $\bar l^*$ are relatively new and notations for these cardinal characteristics are not fixed yet. For example, the weak Lindel\"of number $\bar l$ often is denoted by $wL$, but in \cite[\S3]{Hod} it is denoted by $wc$ and called the {\em weak covering number}.
According to \cite{vMTW}, the weak extent $l^*$ can be called the star cardinality. Spaces with countable weak extent are called star-Lindel\"of in \cite{Mat} and strongly star-Lindel\"of in \cite{DRRT}. Observe that $e\le de$ and $e(X)=de(X)$ for any $T_1$-space $X$.

The relations between the above cardinal invariants are described in the following version of Hodel's diagram \cite{Hod}. In this diagram an arrow $f\to g$ (resp $f\dashrightarrow g$)
indicates that $f(X)\le g(X)$ for any ($T_1$-) space $X$.
$$
\xymatrix{
l^*\ar[r]&de\ar[r]\ar[rd]\ar@/_/@{-->}[d]&l\ar[r]&hl\ar[rd]\\
\dc\ar[ru]\ar[rd]&e\ar[u]&s\ar[ru]\ar[rd]&&nw\ar[r]&w\\
\bar l\ar[r]&c\ar[ru]\ar[r]&d\ar[r]&hd\ar[ru]
}$$

In fact, the cardinal characteristics $d$, $l$, $\bar l$, $l^*$ are initial representatives of the hierarchy of cardinal characteristics $l^{*n}$ and $\bar l^{*n}$, $n\in\frac12\IN$, describing star-covering properties of topological spaces (see the survey paper \cite{Mat} of Matveev for more information on this subject).

For a topological space $X$ and an integer number $n\ge 0$ let
\begin{itemize}
\item $l^{*n}(X)$ be the smallest cardinal $\kappa$ such that for every open cover $\U$ of $X$ there is a subset $A\subset X$ of cardinality $|A|\le\kappa$ such that  $\St^n(A;\U)=X$;
\item $\bar l^{*n}(X)$ be the smallest cardinal $\kappa$ such that for every open cover $\U$ of $X$ there is a subset $A\subset X$ of cardinality $|A|\le\kappa$ such that  $\St^n(A;\U)$ is dense in $X$;
\item $l^{*n\kern-1pt\frac12}(X)$ be the smallest cardinal $\kappa$ such that every open cover $\U$ of $X$ contains a subfamily $\V\subset\U$ of cardinality $|\V|\le\kappa$ such that $\St^n(\cup\V;\U)=X$;
\item $\bar l^{*n\kern-1pt\frac12}(X)$ be the smallest cardinal $\kappa$ such that every open cover $\U$ of $X$ contains a subfamily $\V\subset\U$ of cardinality $|\V|\le\kappa$ such that $\St^n(\cup\V;\U)$ is dense in $X$;
\item $l^{*\w}(X)=\min_{n\in\w}l^{*n}(X)$ and $\bar l^{*\w}=\min_{n\in\w}\bar l^{*n}(X)$.
\end{itemize}
Observe that $l^{*0}=|\cdot|$, $\bar l^{*0}=d$, $l^{*\frac12}=l$, $\bar l^{*\frac12}=\bar l$, and
$l^{*1}=l^*$.

In \cite{Cao} the cardinal characteristics $l^{*n}$ and $l^{*n\kern-1pt\frac12}$ are denoted by $st_{n}\mbox{-}l$ and $st_{n\frac12}\mbox{-}l$, respectively. In \cite{DRRT} spaces $X$ with countable $l^{*n\kern-1pt\frac12}(X)$ and $l^{*n}(X)$ are called {\em $n$-star-Lindel\"of} and {\em strongly $n$-star Lindel\"of}, respectively.

The following diagram describes provable inequalities between cardinal characteristics $l^{*n}$, $\bar l^{*n}$, $l^{*n\kern-1pt \frac12}$, and $\bar l^{*n\kern-1pt\frac12}$ for $n\in\IN$. For two cardinal characteristics $f,g$ an arrow $f\to g$ indicates that $f(X)\le g(X)$ for any topological space $X$.
$$\xymatrix{
l^{*\w}\ar@{=}[dd]\ar[r]&\cdots\ar[r]&l^{*(n{+}1)}\ar[r]\ar[rdd]&l^{*(n{+}\frac12)}\ar[r]\ar[rdd]&l^{*n}\ar[r]
&\cdots\ar[r]&l^{*1}=l^*\ar[r]\ar[rrdd]&de\ar[r]\ar[rd]&l^{*\frac12}=l\ar[r]&hl\ar[d]\\
&&&&&&dc\ar[ru]\ar[rd]&&s\ar[r]\ar[ru]\ar[rd]&nw\\
\bar l^{*\w}\ar[r]&\cdots\ar[r]&\bar l^{*(n{+}\frac12)}\ar[r]\ar[ruu]&\bar l^{*n}\ar[r]\ar[ruu]&\bar l^{*(n{-}\frac12)}\ar[r]\ar[rru]&\cdots\ar[r]&\bar l^{*\frac12}=\bar l\ar[r]\ar[rruu]&c\ar[r]\ar[ru]&\bar l^{*0}=d\ar[r]&hd\ar[u]
}
$$
\smallskip

The unique non-trivial inequalities $l^{*1}\le de$ and $\bar l^{*1\kern-1pt\frac12}\le dc$ in this diagram follow from the next proposition whose proof can be found in \cite{BRv}.

\begin{proposition}\label{p1.2a} Any topological space $X$ has $l^{*1}(X)\le de(X)$ and $\bar l^{*1\kern-1pt\frac12}(X)\le \dc(X)$.
\end{proposition}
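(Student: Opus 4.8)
The plan is to handle both inequalities by one maximality scheme resting on the following closure-free remark: if $\U$ is an \emph{open} cover of $X$ and $\mathcal D$ is a family of subsets of $X$ such that no member of $\U$ meets two distinct members of $\mathcal D$, then $\mathcal D$ is automatically discrete --- for $z\in X$ any $U\in\U$ with $z\in U$ is a neighbourhood of $z$ meeting at most one member of $\mathcal D$. So for a fixed open cover $\U$ I would build, by Zorn's lemma, a family maximal with respect to this ``$\U$-separation'' property and read the required star-covering subfamily off from its maximality.

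\emph{Bounding $l^{*1}=l^*$ by $de$.} Given an open cover $\U$, take $A\subseteq X$ maximal with the property that no member of $\U$ contains two distinct points of $A$ (the $\U$-separated subsets of $X$ are closed under unions of chains, so Zorn applies). The family of singletons $(\{a\})_{a\in A}$ is then discrete by the remark, hence $|A|\le de(X)$. Maximality yields $\St(A;\U)=X$: for $x\in X$ pick $U\in\U$ with $x\in U$; if $x\in A$ then $x\in U\subseteq\St(A;\U)$, and if $x\notin A$ then $A\cup\{x\}$ is not $\U$-separated, which (since $A$ itself is) forces some $U'\in\U$ to contain $x$ together with a point of $A$, so again $x\in\St(A;\U)$. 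Thus $l^*(X)\le de(X)$. I expect no real difficulty here; one should only notice that $A$ need not be closed, so the argument bounds $l^*$ by $de$ and not by the extent $e$, in accordance with Hodel's diagram.

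\emph{Bounding $\bar l^{*1\frac12}$ by $\dc$.} Now the discrete family must consist of \emph{open} sets, so singletons are unavailable. For an open cover $\U$ with non-empty members, take $\mathcal W$ maximal among families of non-empty open subsets of $X$ satisfying (a) each $W\in\mathcal W$ lies in some member of $\U$, and (b) no member of $\U$ meets two distinct members of $\mathcal W$. By (b) and the remark, $\mathcal W$ is a discrete family of non-empty open sets, so $|\mathcal W|\le\dc(X)$; choosing $U_W\in\U$ with $W\subseteq U_W$ and putting $\V=\{U_W:W\in\mathcal W\}$ gives $\V\subseteq\U$ with $|\V|\le\dc(X)$ and $\cup\mathcal W\subseteq\cup\V$, so $\St^1(\cup\mathcal W;\U)\subseteq\St^1(\cup\V;\U)$. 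It then remains to prove that $\St^1(\cup\mathcal W;\U)$ is dense. If it were not, set $H=X\setminus\overline{\St^1(\cup\mathcal W;\U)}$ (non-empty open), pick $x\in H$ and $U^*\in\U$ with $x\in U^*$, and consider $W'=U^*\cap H$, a non-empty open subset of $U^*\in\U$. Then $\mathcal W\cup\{W'\}$ still satisfies (a) and (b), contradicting maximality: for if some $U\in\U$ met $W'$ and some $W_0\in\mathcal W$, then $U\cap H\ne\emptyset$ (because $W'\subseteq H$) while $U\subseteq\St^1(\cup\mathcal W;\U)\subseteq\overline{\St^1(\cup\mathcal W;\U)}$ (because $U\cap W_0\ne\emptyset$), so $U\cap H=\emptyset$, a contradiction. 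Hence $\St^1(\cup\mathcal W;\U)$ is dense and $\bar l^{*1\frac12}(X)\le\dc(X)$.

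The step I expect to be the genuine obstacle is this last density argument, specifically the decision to work with arbitrary open subsets of members of $\U$ rather than with a maximal discrete \emph{subfamily of $\U$} itself. Enlarging a discrete family of open sets inside a prescribed open region requires keeping the closures of the members pairwise disjoint, and that cannot be arranged in a general (non-regular) space; replacing the candidate by $U^*\cap H$ and exploiting that any $U\in\U$ bridging it to $\mathcal W$ would simultaneously have to lie in $\overline{\St^1(\cup\mathcal W;\U)}$ and meet its complement $H$ sidesteps closures altogether and makes the argument valid for every topological space.
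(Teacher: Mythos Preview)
Your argument is correct. The paper does not actually prove Proposition~\ref{p1.2a} in the text; it merely states that ``its proof can be found in \cite{BRv}'', so a line-by-line comparison with the paper's own proof is not possible here. That said, your approach --- a single Zorn maximality scheme based on the observation that any family $\mathcal D$ such that no member of the open cover $\U$ meets two distinct members of $\mathcal D$ is automatically discrete --- is exactly the natural route, and both halves go through cleanly.

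A couple of minor remarks for polish. In Part~1, when you argue $\St(A;\U)=X$, the case split ``$x\in A$'' versus ``$x\notin A$'' is unnecessary: maximality already gives, for any $x$, a $U'\in\U$ containing $x$ and some point of $A$ (possibly $x$ itself). In Part~2, you should state explicitly that $W'\notin\mathcal W$, so that $\mathcal W\cup\{W'\}$ is a genuine enlargement; this is immediate because every $W\in\mathcal W$ lies in $\St^1(\cup\mathcal W;\U)$ while $W'\subseteq H$ is disjoint from that star, but it is worth a sentence. With these touches your proof is complete and self-contained.
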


For quasi-regular spaces many star-covering properties are equivalent. Let us recall that a topological space $X$ is called {\em quasi-regular} if each non-empty open set $U\subset X$ contains the closure $\overline{V}$ of another non-empty open set $V$ in $X$. The following proposition was proved in \cite{BRv} (and for regular spaces in \cite{DRRT}).

\begin{proposition}\label{p1.2} Let $X$ be a quasi-regular space. Then
\begin{enumerate}
\item $\dc(X)=\bar l^{*1\kern-1pt\frac12}(X)=l^{*\w}(X)$.
\item If $X$ is normal, then $\dc(X)=\bar l^{*1}(X)$.
\item If $X$ is perfectly normal, then $\dc(X)=c(X)=\bar l^{*\frac12}(X)$.
\item If $X$ is collectively Hausdorff, then $\dc(X)=de(X)=l^{*\kern-1pt 1}(X)$.
\item If $X$ is paracompact, then $\dc(X)=l(X)$.
\item If $X$ is perfectly paracompact, then $\dc(X)=hl(X)$.
\end{enumerate}
\end{proposition}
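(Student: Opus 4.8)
The statement is a bundle of equalities whose whole weight is in item~(1); items~(2)--(6) follow from (1) together with the diagram above and one extra argument apiece. I will concentrate on (1).

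For (1) the plan is to close the cycle
$$\dc(X)\ \le\ l^{*\w}(X)\ \le\ \bar l^{*1\kern-1pt\frac12}(X)\ \le\ \dc(X).$$
The rightmost inequality is Proposition~\ref{p1.2a}. For the middle one: $l^{*\w}(X)\le l^{*3}(X)$ holds by definition, and $l^{*3}(X)\le\bar l^{*1\kern-1pt\frac12}(X)$ because, given an open cover $\U$ and a subfamily $\V\subseteq\U$ with $|\V|\le\bar l^{*1\kern-1pt\frac12}(X)$ and $\St(\cup\V;\U)$ dense, one point from each member of $\V$ forms a set $A$ with $\St(A;\U)\supseteq\cup\V$; then $\St^2(A;\U)\supseteq\St(\cup\V;\U)$ is dense, and since every non-empty open set meets it, $\St^3(A;\U)=X$.

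The heart of the matter is $\dc(X)\le l^{*\w}(X)$, equivalently $\dc(X)\le l^{*n}(X)$ for every $n\ge1$. Fix a discrete family $\{D_i:i\in I\}$ of non-empty open sets. I aim to build an open cover $\U$ such that every $A\subseteq X$ with $\St^n(A;\U)=X$ meets every $D_i$; then $|I|\le|A|\le l^{*n}(X)$, and taking the supremum over all such families gives the result. Put the clopen members of the family into $\U$ as isolated clopen sets (they cause no trouble), and assume the rest are non-clopen. Apply quasi-regularity $2n$ times to each $D_i$ to get non-empty open sets $D_i=G_i^0\supseteq\overline{G_i^1}\supseteq G_i^1\supseteq\dots\supseteq\overline{G_i^{2n}}\supseteq G_i^{2n}$, and let $\U$ consist of: the ``shells'' $\Sigma_i^k:=G_i^{k-1}\setminus\overline{G_i^{k+1}}$ ($i\in I$, $1\le k\le 2n-1$); the ``cores'' $G_i^{2n-1}$ ($i\in I$); and a family of open sets covering $X\setminus\bigcup_i\overline{G_i^1}$, each disjoint from $\bigcup_i\overline{G_i^1}$ and meeting at most one $D_i$ (possible since $\bigcup_i\overline{G_i^1}$ is closed and $\{D_i\}$ is discrete). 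I would then check: the sets $\Sigma_i^1,\dots,\Sigma_i^{2n-1},G_i^{2n-1}$ cover $D_i$; for fixed $i$ these $2n$ sets form a path in the nerve of $\U$ (consecutive ones overlap, non-consecutive ones are disjoint), all lying in $D_i$; this path is attached to the rest of the nerve only at its end $\Sigma_i^1$, because $\Sigma_i^2,\dots,\Sigma_i^{2n-1},G_i^{2n-1}$ all lie in $\overline{G_i^1}$ and hence miss every ``outside'' set and every set attached to $D_j$, $j\ne i$; and a chosen point $y_i\in G_i^{2n}$ lies in no member of $\U$ other than $G_i^{2n-1}$. Now if $\St^n(A;\U)=X$, fix $i$ and a chain $U_1,\dots,U_n$ in $\U$ with $A\cap U_1\ne\emptyset$, $U_k\cap U_{k+1}\ne\emptyset$, $y_i\in U_n$; then $U_n=G_i^{2n-1}$, the far end of a path of $2n$ vertices, so from $U_n$ one cannot return to $\Sigma_i^1$ in the remaining $n-1$ steps; hence $U_1,\dots,U_n$ never leave the path, so $U_1\subseteq D_i$ and $A\cap D_i\ne\emptyset$.

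Given (1), each of (2)--(6) needs only one more inequality --- the ``reverse'' of the corresponding diagram arrow --- and each is proved by the same shrink-and-maximize device: take a maximal discrete family $\mathcal D$ of non-empty open sets each inside a member of the given cover (so $|\mathcal D|\le\dc(X)$), pick a point in each, and use the extra hypothesis. Thus for (4), in a collectively Hausdorff space the closed discrete set of chosen points is strongly discrete, yielding a discrete open family of the same size and $de(X)\le\dc(X)$; for (3), the union of a disjoint open family is $F_\sigma$ in a perfectly normal space, and on each closed piece of an $F_\sigma$-decomposition the family becomes discrete, giving $c(X)\le\aleph_0\cdot\dc(X)=\dc(X)$; for (5), A.\,H.~Stone's theorem gives a $\sigma$-discrete open refinement of any open cover of a paracompact space, so $l(X)\le\aleph_0\cdot\dc(X)=\dc(X)$; (2) and (6) are analogous (and use nothing beyond quasi-regularity and the stated hypothesis). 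The one genuinely delicate point is the cover in (1): the $2n$-fold nesting, together with keeping the ``outside'' sets away from $\overline{G_i^1}$, is exactly what makes the ``depth'' of each $D_i$ in $\U$ exceed $n$, so that no $\U$-chain of length $n$ can reach $G_i^{2n}$ from outside $D_i$, and in particular no such chain can run between two different members of the family.
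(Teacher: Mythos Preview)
The paper does not contain its own proof of this proposition: it is quoted verbatim from \cite{BRv} (see the sentence ``The following proposition was proved in \cite{BRv} (and for regular spaces in \cite{DRRT})'' just before the statement). So there is nothing in this paper to compare your argument against.

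That said, your argument for the core inequality $\dc(X)\le l^{*n}(X)$ in item~(1) is sound: the $2n$-fold quasi-regular nesting, the ``shell'' cover $\Sigma_i^k$, and the path-in-the-nerve count do exactly what you claim, and the cycle $\dc\le l^{*\w}\le\bar l^{*1\frac12}\le\dc$ closes correctly via Proposition~\ref{p1.2a}. One small point to tidy: the chain argument needs the consecutive shells to actually overlap, which requires the inclusions $\overline{G_i^{k}}\subseteq G_i^{k-1}$ to be strict; you noted the clopen case for $D_i$ itself but should remark that one may, at each step, either obtain a strict shrinking or declare the current set clopen and treat it as an isolated member of $\U$.

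Your sketches for (2)--(6) are less complete than you suggest. In particular, the ``maximal discrete family'' device needs care: an increasing union of discrete families need not be discrete, so Zorn's Lemma does not apply directly, and your sentence ``take a maximal discrete family $\mathcal D$ \dots'' hides a genuine issue. For (3), the step ``on each closed piece of an $F_\sigma$-decomposition the family becomes discrete'' gives a discrete family of \emph{sets}, not of \emph{open} sets, so it bounds $de$ rather than $\dc$; promoting this to a discrete open family is exactly where one needs more than normality, and the argument as written does not close the gap. Items (4) and (5) are fine as sketched; (2) and (6) would need the same attention as (3). Since the present paper only cites the result, you would have to consult \cite{BRv} (or \cite{DRRT} for the regular case) to see how these points are handled there.
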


Proposition~\ref{p1.2} implies that for quasi-regular spaces the diagram describing the relations between the cardinal characteristics simplifies to the following form.
$$
\xymatrix{
%&&&&e\ar[rd]\\
&&l^{*\kern-1pt 2}\ar[r]\ar[rdd]&l^{*1\kern-1pt\frac12}\ar[r]\ar[rdd]&l^{*\kern-0.5pt 1}\ar[r]\ar[rrdd]&de\ar[r]\ar[rd]&l^{*\kern-1pt\frac12}=l\ar[r]&hl\ar[d]\\
\dc\ar@{=}[r]&l^{*\w}\ar@{=}[rd]\ar[ru]&&&&&s\ar[ru]\ar[rd]&nw\\
&&\bar l^{*1\kern-1pt\frac12}\ar[r]\ar[ruu]&\bar l^{*1}\ar[r]\ar[ruu]&\bar l^{*\kern-1pt\frac12}\ar[r]\ar[rruu]&c\ar[r]\ar[ru]&\bar l^{*0}=d\ar[r]&hd\ar[u]
}
$$

Next, we consider some local cardinal characteristics of topological spaces.
Let $X$ be a topological space, $x$ be a point of $X$, and $\mathcal N_x$ be the family of all open neighborhoods of $x$ in $X$.
\begin{itemize}
\item The {\em character} $\chi_x(X)$ of $X$ at $x$ is the smallest cardinality of a neighborhood base at $x$.
\item The {\em pseudocharacter} $\psi_x(X)$ of $X$ at $x$ is the smallest cardinality of a subfamily $\U\subset\mathcal N_x$ such that $\bigcap\U=\bigcap\N_x$.
\item The {\em closed pseudocharacter} $\overline{\psi}_x(X)$ of $X$ at $x$ is the smallest cardinality of a subfamily $\U\subset\mathcal N_x$ such that $\bigcap_{U\in\U}\overline{U}=\bigcap_{V\in\N_x}\overline{V}$.
\end{itemize}
It is easy to see that for any point $x$ of a Hausdorff topological space $X$ we get
$$\psi_x(X)\le\overline{\psi}_x(X)\le\chi_x(X).$$
The cardinals $$\chi(X)=\sup_{x\in X}\chi_x(X),\;\;\psi(X)=\sup_{x\in X}\psi_x(X),\;\;\mbox{ and }\;\;\overline{\psi}(X)=\sup_{x\in X}\overline{\psi}_x(X)$$
are called the {\em character}, the {\em pseudocharacter}, and the {\em closed pseudocharacter} of $X$, respectively.
It follows that $$\psi(X)\le\overline{\psi}(X)\le\chi(X)$$for any Hausdorff topological space $X$.

The (closed) pseudocharacter is upper bounded by the (closed) diagonal number defined as follows.
Let $X$ be a Hausdorff topological space. By $\Delta_X=\{(x,y)\in X\times X:x=y\}$ we denote the {\em diagonal} of the square $X\times X$.
\begin{itemize}
\item The {\em diagonal number} $\Delta(X)$ of $X$ is the smallest cardinality of a family $\U$ of open subsets of $X\times X$ such that $\bigcap\U=\Delta_X$.
\item The {\em closed diagonal number} $\overline{\Delta}(X)$ of $X$ is the smallest cardinality of a family $\U$ of open subsets of $X\times X$ such that $\bigcap_{U\in\U}\overline{U}=\Delta_X$.
\end{itemize}
It is easy to see that $\psi(X)\le\Delta(X)\le\overline{\Delta}(X)$ and $\overline{\psi}(X)\le\overline{\Delta}(X)$ for any Hausdorff space $X$.

Following \cite[\S2.1]{Grue} we say that a space $X$ has ({\em regular}) {\em $G_\delta$-diagonal} if $\Delta(X)\le\w$ (resp. $\overline{\Delta}(X)\le\w$).
\smallskip

The (closed) diagonal number of a functionally Hausdorff space $X$ is upper bounded by
\begin{itemize}
\item the {\em submetrizability number} $sm(X)$ of $X$, defined as the smallest number of continuous pseudometrics which separate points of $X$, and
\item the {\em $i$-weight} $iw(X)$ of $X$, defined as the smallest number of continuous real-valued functions that separate points of $X$.
\end{itemize}

The following diagram describes relations between these cardinal characteristics. In this diagram for two cardinal characteristics $f,g$ an arrow $f\to g$ indicates that $f(X)\le g(X)$ for any functionally Hausdorff topological space $X$.
$$\xymatrix{
&\psi\ar[r]\ar[d]&\Delta\ar[d]\\
\chi&\overline{\psi}\ar[l]\ar[r]&\overline{\Delta}\ar[r]&sm\ar[r]&iw\ar[r]&sm\cdot \log\dc
}
$$

{The unique non-trivial inequality $iw\le sm\cdot \log\dc$ in this diagram is proved in the following proposition.

\begin{proposition}\label{p1.3} Each infinite functionally Hausdorff space $X$ has  $$iw(X)\cdot \w=sm(X)\cdot \log(\dc(X))\mbox{ \ \ and \ \ }|X|\le\dc(X)^{\w\cdot sm(X)}\le 2^{\w\cdot iw(X)}.$$
\end{proposition}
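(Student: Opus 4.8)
The plan is to reduce everything to two lemmas and then combine them by cardinal arithmetic. The two lemmas will be: (A) for every continuous pseudometric $\rho$ on $X$, the pseudometric space $(X,\rho)$ has a $\rho$-dense subset of cardinality $\le\dc(X)$; and (B) every metrizable space $M$ satisfies $iw(M)\le\w\cdot\log(w(M))$. The easy opposite bounds I would settle first: $sm(X)\le iw(X)$, since the pseudometrics $(x,y)\mapsto|f(x)-f(y)|$ attached to a point-separating family of continuous functions separate points; and $\dc(X)\le de(X)=e(X)\le|X|$ with $|X|\le 2^{\w\cdot iw(X)}$, because $X$ injects continuously into a Tychonoff space of weight $iw(X)$, which has at most $2^{\w\cdot iw(X)}$ points. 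I also need $\dc(X)\ge\w$: for any $n$ distinct points, the product of the finitely many continuous functions separating them maps $X$ into a finite-dimensional Euclidean space injectively on those points, and pulling back $n$ open balls with pairwise disjoint closures around their images gives a discrete family of $n$ nonempty open sets. Consequently $\log(\dc(X))\ge\w$ and $\log(\dc(X))\le\w\cdot iw(X)$, whence $\w\le sm(X)\cdot\log(\dc(X))\le iw(X)\cdot\w$ and $\dc(X)^{\w\cdot sm(X)}\le 2^{\w\cdot iw(X)}$; these give the ``$\ge$'' half of the equality and the last inequality of the cardinality chain, so only Lemma (A), Lemma (B), and their assembly require real work.

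For Lemma (A), I would, for each $n\in\IN$, use Zorn's Lemma to pick a maximal $A_n\subset X$ with pairwise $\rho$-distances $>\tfrac1n$; then $\bigcup_n A_n$ is $\rho$-dense by maximality. The key observation is that the $\rho$-open balls $\{x:\rho(x,a)<\tfrac1{5n}\}$, $a\in A_n$, form a discrete family of nonempty open subsets of $X$: for $z\in X$ the open set $\{x:\rho(x,z)<\tfrac1{10n}\}$ is a neighbourhood of $z$ meeting at most one of them, since meeting two would place two points of $A_n$ at $\rho$-distance $<\tfrac2{10n}+\tfrac2{5n}<\tfrac1n$. Hence $|A_n|\le\dc(X)$ (using $\dc(X)\ge\w$ when $A_n$ is finite), and $\bigcup_n A_n$ has size $\le\w\cdot\dc(X)=\dc(X)$; in particular the metric space $M_\rho$ associated to $(X,\rho)$ has density $d(M_\rho)\le\dc(X)$ and cardinality $|M_\rho|\le d(M_\rho)^\w\le\dc(X)^\w$.

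Lemma (B) is the technical core and the step I expect to be the main obstacle, precisely because one must get the \emph{logarithm} of the weight rather than the weight itself. I would fix a compatible metric $\rho$ on $M$ and a $\sigma$-discrete base $\mathcal B=\bigcup_{n\in\w}\mathcal B_n$ (Nagata--Smirnov/Bing), each $\mathcal B_n$ discrete (hence disjoint) with $|\mathcal B_n|\le\lambda:=w(M)$, fix injections $c_n\colon\mathcal B_n\to\{0,1\}^{\log\lambda}\setminus\{0\}$, and define for $n\in\w$, $i<\log\lambda$ the function $g_{n,i}(x)=\sum\{\rho(x,M\setminus B):B\in\mathcal B_n,\ c_n(B)_i=1\}$. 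Discreteness of $\mathcal B_n$ makes the sum locally a single term $c_n(B)_i\cdot\rho(\cdot,M\setminus B)$ (or $0$), so each $g_{n,i}$ is continuous; verifying this at boundary points of the base elements is the point that needs care. These functions separate points: if $x\ne y$, take $B\in\mathcal B_n$ with $x\in B\not\ni y$ (possible as $\mathcal B$ is a base and $M$ is $T_1$); then $g_{n,i}(x)=\rho(x,M\setminus B)>0$ for any $i$ with $c_n(B)_i=1$, while on a coordinate where $c_n(B)$ differs from $c_n(B')$ — $B'$ being the unique member of $\mathcal B_n$ containing $y$, if any, and otherwise all $g_{n,i}(y)=0$ — exactly one of $g_{n,i}(x),g_{n,i}(y)$ is positive. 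So there are $\le\w\cdot\log(w(M))$ continuous real functions separating the points of $M$. A slicker alternative is to invoke Kowalsky's theorem to embed $M$ into $J(\lambda)^\w$ and observe that the ``radial rescaling'' $x\mapsto(r(x),r(x)\,p_{s(x)})\in[0,1]\times[0,1]^{\log\lambda}$ is a continuous injection of the hedgehog $J(\lambda)$, giving $iw(J(\lambda))\le\w\cdot\log\lambda$.

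To finish, I would put $\kappa=sm(X)$, take continuous pseudometrics $(\rho_\alpha)_{\alpha<\kappa}$ separating points of $X$, and let $M_\alpha$ be the metric space associated to $(X,\rho_\alpha)$; the diagonal map $x\mapsto([x]_{\rho_\alpha})_{\alpha<\kappa}$ is a continuous injection of $X$ into $\prod_{\alpha<\kappa}M_\alpha$. By (A), $|M_\alpha|\le\dc(X)^\w$, so $|X|\le\dc(X)^{\w\cdot sm(X)}$, which with the first paragraph completes the cardinality chain. Again by (A), $w(M_\alpha)=d(M_\alpha)\le\dc(X)$, so by (B) each $M_\alpha$ continuously injects into a Tychonoff space of weight $\le\w\cdot\log(\dc(X))$; composing, $X$ continuously injects into a Tychonoff space of weight $\le\kappa\cdot\w\cdot\log(\dc(X))=sm(X)\cdot\log(\dc(X))$ (the $\w$ is absorbed since $\log(\dc(X))\ge\w$). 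Hence $iw(X)\le sm(X)\cdot\log(\dc(X))$, so $iw(X)\cdot\w\le sm(X)\cdot\log(\dc(X))$, which together with the reverse inequality from the first paragraph proves the equality.
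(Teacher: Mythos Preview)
Your proof is correct and follows essentially the same route as the paper. Both arguments inject $X$ into a product of $sm(X)$ metric spaces, bound the density of each factor by $\dc(X)$, and then use Kowalsky's hedgehog embedding together with a continuous injection of the hedgehog $J(\kappa)$ into a cube of weight $\log\kappa$ to get $iw(X)\le sm(X)\cdot\log(\dc(X))$; the reverse inequality and the cardinality chain are handled identically. The only cosmetic differences are that the paper obtains $d(M_\alpha)\le\dc(X)$ in one line (``$M_\alpha$ is a continuous image of $X$, so $d(M_\alpha)=\dc(M_\alpha)\le\dc(X)$'') where you prove Lemma~(A) directly via maximal $\tfrac1n$-separated sets, and the paper uses only the hedgehog argument for Lemma~(B) whereas you also sketch an alternative via a $\sigma$-discrete base.
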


\begin{proof} The inequality $sm(X)\cdot\log(\dc(X))\le iw(X)\cdot\w$ follows from the inequalities $sm(X)\le iw(X)$ and $\dc(X)\le |X|\le |[0,1]^{iw(X)}|=2^{iw(X)\cdot\w}$, the latter of which implies $\log(\dc(X))\le \log(2^{iw(X)\cdot\w})\le iw(X)\cdot\w$.

Now we prove the inequalities $iw(X)\cdot \w\le sm(X)\cdot \log(\dc(X))$ and $|X|\le dc(X)^{\w\cdot sm(X)}$. The definition of the submetrizability number implies that $X$ admits a continuous injective map $f:X\to \prod_{\alpha\in sm(X)}M_\alpha$ into the Tychonoff product of $sm(X)$ many metric spaces $M_\alpha$. We lose no generality assuming that each metric space $M_\alpha$ is a continuous image of $X$ and hence $d(M_\alpha)=\dc(M_\alpha)\le \dc(X)$ and $|M_\alpha|\le d(M_\alpha)^\w$. Then $$|X|\le\prod_{\alpha\in sm(X)}|M_\alpha|\le \prod_{\alpha\in sm(X)}d(M_\alpha)^\w\le \prod_{\alpha\in sm(X)}dc(X)^\w=dc(X)^{\w\cdot sm(X)}.$$

By \cite[4.4.9]{Eng}, for every $\alpha\in sm(X)$ the metric space $M_\alpha$ admits a topological embedding into the countable power $H_{\kappa}^\w$ of the hedgehog $H_{\kappa}=\{(x_i)_{i\in\kappa}\in[0,1]^\kappa:|\{i\in\kappa:x_i\ne0\}|\le 1\}$ with $\kappa=\dc(X)\ge d(M_\alpha)$ many spines. The hedgehog $H_\kappa$ can be thought as a cone over a discrete space $D$ of cardinality $\kappa$. The discrete space $D$ admits an injective continuous map into the Tychonoff cube $[0,1]^{\log(\kappa)}$. Consequently, $H_\kappa$ admits an injective continuous map into the cone over the Tychonoff cube $[0,1]^{\log(\kappa)}$, which implies that $iw(H_k)\le \log(\kappa)=\log(\dc(X))$ and $iw(H_\kappa^\w)\le \log(\dc(X))\cdot\w=\log(\dc(X))$. Then $iw(X)\le sm(X)\cdot iw(H_k^\w)\le sm(X)\cdot \log(\dc(X))$. This completes the proof of the equality $iw(X)\cdot \w=sm(X)\cdot\log(\dc(X))$.

To complete the proof of the proposition, observe that
$$|X|\le\dc(X)^{\w\cdot sm(X)}\le \big(2^{\log\dc(X)}\big)^{\w\cdot sm(X)}=2^{\log(\dc(X))\cdot \w\cdot sm(X)}=2^{\w\cdot iw(X)}.$$
\end{proof}

\subsection{Pre-uniform spaces and their cardinal characteristics}\label{s13}

By an {\em entourage} on a set $X$ we understand any subset $U\subset X\times X$ containing the diagonal $\Delta_X=\{(x,y)\in X\times X:x=y\}$ of $X\times X$. For an entourage $U$ on $X$, point $x\in X$ and  subset $A\subset X$ let $B(x;U)=\{y\in X:(x,y)\in U\}$ be the {\em $U$-ball} centered at $x$, and $B(A;U)=\bigcup_{a\in A}B(a;U)$ be the {\em $U$-neighborhood} of $A$ in $X$.

Now we define some operations on entourages.
For two entourages $U,V$ on $X$ let $$U^{-1}=\{(x,y)\in X\times X:(y,x)\in U\}$$ be the {\em inverse} entourage and $$UV=\{(x,z)\in X\times X:\exists y\in X\;\mbox{ such that $(x,y)\in U$ and $(y,z)\in V$}\}$$be the {\em composition} of $U$ and $V$. It is easy to see that $(UV)^{-1}=V^{-1}U^{-1}$. For every entourage $U$ on $X$ define its powers $U^n$, $n\in\IZ$, by the formula: $U^0=\Delta_X$ and $U^{n+1}=U^nU$, $U^{-n-1}=U^{-n}U^{-1}$ for $n\in\w$. Define also the {\em alternating powers} $U^{\pm n}$ and $U^{\mp n}$ of $U$ by the recursive formulas: $U^{\pm0}=U^{\mp0}=\Delta_X$, and $U^{\pm(n+1)}=UU^{\mp n}$, $U^{\mp(n+1)}=U^{-1}U^{\pm n}$ for $n\ge 0$. If $U$ is an entourage on a topological space $X$, then put $\overline{U}=\bigcup_{x\in X}\overline{B(x;U)}$ be the closure of $U$ in the product $X_d\times X$ where $X_d$ is the set $X$ endowed with the discrete topology.
\smallskip

The following lemma proved in \cite{BRv} shows that the alternating power $U^{\mp2}$ on an entourage $U$ is equivalent to taking the star with respect to the cover $\U=\{B(x;U):x\in X\}$.

\begin{lemma}\label{l2.2} For any entourage $U$ on a set $X$ and a point $x\in X$ we get $B(x;U^{-1}U)=\St(x;\U)$ where $\U=\{B(x;U):x\in X\}$. Consequently, $B(x;U^{\mp 2n})=B(x;(U^{-1}U)^n)=\St^n(x;\U)$ for every $n\in\IN$.
\end{lemma}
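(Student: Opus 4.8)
The plan is to unfold both sides of the claimed equality $B(x;U^{-1}U)=\St(x;\U)$ directly from the definitions and then iterate. First I would handle the base case. Recall $\St(x;\U)=\St^1(\{x\};\U)=\bigcup\{V\in\U:V\cap\{x\}\ne\emptyset\}$, so by definition of $\U$ this is $\bigcup\{B(y;U):y\in X,\ x\in B(y;U)\}$. Now $x\in B(y;U)$ means $(y,x)\in U$, i.e. $(x,y)\in U^{-1}$, so a point $z$ lies in $\St(x;\U)$ iff there is $y\in X$ with $(x,y)\in U^{-1}$ and $z\in B(y;U)$, i.e. $(y,z)\in U$. By the definition of composition of entourages, this says exactly $(x,z)\in U^{-1}U$, i.e. $z\in B(x;U^{-1}U)$. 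This proves the first assertion. (One should note that $U^{-1}U$ is again an entourage, since $\Delta_X\subset U$ and $\Delta_X\subset U^{-1}$ give $\Delta_X\subset U^{-1}U$, so the ball $B(x;U^{-1}U)$ makes sense; this is immediate.)

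Next I would prove $B(x;(U^{-1}U)^n)=\St^n(x;\U)$ by induction on $n\in\IN$, the case $n=1$ being the first assertion. For the inductive step, I would use the general fact that for any entourages $P,Q$ on $X$ one has $B(x;PQ)=\bigcup_{y\in B(x;P)}B(y;Q)=B\big(B(x;P);Q\big)$, which is just a restatement of the definition of composition. Taking $P=(U^{-1}U)^n$ and $Q=U^{-1}U$ gives $B(x;(U^{-1}U)^{n+1})=\bigcup_{y\in B(x;(U^{-1}U)^n)}B(y;U^{-1}U)$. By the induction hypothesis the index set is $\St^n(x;\U)$, and by the base case each $B(y;U^{-1}U)=\St(y;\U)$, so the right-hand side is $\bigcup_{y\in\St^n(x;\U)}\St(y;\U)$. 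It remains to observe that this equals $\St^{n+1}(x;\U)=\bigcup\{V\in\U:V\cap\St^n(x;\U)\ne\emptyset\}$: a point $z$ lies in the former iff $z\in B(y;U)$ for some $y\in\St^n(x;\U)$, equivalently $z$ lies in some member $B(y';U)$ of $\U$ that contains a point $y$ of $\St^n(x;\U)$ (take $y'$ with $(y',y)\in U$ — wait, more carefully: $z\in\St(y;\U)$ means $z\in B(y';U)$ for some $y'$ with $y\in B(y';U)$), hence $B(y';U)$ meets $\St^n(x;\U)$, giving $z\in\St^{n+1}(x;\U)$; the reverse inclusion is analogous. This establishes $B(x;(U^{-1}U)^n)=\St^n(x;\U)$.

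Finally, the remaining identity $B(x;U^{\mp2n})=B(x;(U^{-1}U)^n)$ is a purely formal computation with the alternating-power recursion. I would prove $U^{\mp2n}=(U^{-1}U)^n$ by induction on $n$: for $n=0$ both sides are $\Delta_X$, and for the step one unwinds $U^{\mp2(n+1)}=U^{\mp(2n+2)}=U^{-1}U^{\pm(2n+1)}=U^{-1}UU^{\mp2n}=U^{-1}U(U^{-1}U)^n=(U^{-1}U)^{n+1}$, using the defining relations $U^{\mp(k+1)}=U^{-1}U^{\pm k}$ and $U^{\pm(k+1)}=UU^{\mp k}$ together with the induction hypothesis. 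I do not expect any genuine obstacle here; the whole lemma is a bookkeeping exercise translating the entourage formalism into the star-operation formalism. The only place demanding a little care is keeping the quantifiers straight when identifying $\bigcup_{y\in\St^n(x;\U)}\St(y;\U)$ with $\St^{n+1}(x;\U)$, since the definition of $\St^{n+1}$ ranges over members of $\U$ meeting $\St^n$, not over points of $\St^n$ — but these two descriptions coincide precisely because $\U$ is the cover by all $U$-balls.
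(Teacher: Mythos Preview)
Your argument is correct: the base identity $B(x;U^{-1}U)=\St(x;\U)$ is verified by a direct unwinding of the definitions, the identity $U^{\mp 2n}=(U^{-1}U)^n$ follows from the recursion for alternating powers, and the star identity is then obtained by induction using $B(x;PQ)=B\big(B(x;P);Q\big)$ together with the equality $\bigcup_{y\in\St^n(x;\U)}\St(y;\U)=\St^{n+1}(x;\U)$. The paper itself does not supply a proof of this lemma, referring instead to \cite{BRv}, so there is no in-paper argument to compare against; your direct verification is exactly what one would expect and is complete (the parenthetical ``wait'' aside, which you might clean up in a final version).
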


A family $\U$ of entourages on a set $X$ is called a {\em uniformity} on $X$ if it satisfies the following four axioms:
\begin{itemize}
\item[(U1)] for any $U\in\U$, every entourage $V\subset X\times X$ containing $U$ belongs to $\U$;
\item[(U2)] for any entourages $U,V\in\U$ there is an entourage $W\in\U$ such that $W\subset U\cap V$;
\item[(U3)] for any entourage $U\in\U$ there is an entourage $V\in\U$ such that $VV\subset U$;
\item[(U4)] for any entourage $U\in\U$ there is an entourage $V\in\U$ such that $V\subset U^{-1}$.
\end{itemize}
A family $\U$ of entourages on $X$ is called a {\em quasi-uniformity} (resp. {\em pre-uniformity}) on $X$ if it satisfies the axioms (U1)--(U3) (resp. (U1)--(U2)~).
So, each uniformity is a quasi-uniformity and each quasi-uniformity is a pre-uniformity. Observe that a pre-uniformity is just a filter of entourages on $X$.

A subfamily $\mathcal B\subset\U$ is called a {\em base} of a pre-uniformity $\U$ on $X$ if each entourage $U\in\U$ contains some entourage $B\in\mathcal B$. Each base of a preuniformity satisfies the axiom (U2). Conversely, each family  $\mathcal B$ of entourages on $X$ satisfying the axiom $(U2)$ is a base of a unique pre-uniformity $\langle\mathcal B\rangle$ consisting of entourages $U\subset X\times X$ containing some entourage $B\in\mathcal B$. If the base $\mathcal B$ satisfies the axiom (U3) (and (U4)), then the pre-uniformity $\langle\mathcal B\rangle$ is a quasi-uniformity (and a uniformity).

Next we define some operations over preuniformities.
Given two preuniformities $\U,\V$ on a set $X$ put $\U^{-1}=\{U^{-1}:U\in\U\}$, $\U\wedge\V=\{U\cup V:U\in\U,\;V\in\V\}$, $\U\vee\V=\{U\cap V:U\in\U,\;V\in\V\}$ and let $\U\V$ be the pre-uniformity generated by the base $\{UV:U\in\U,\;V\in\V\}$. For every $n\in\w$ let $\U^{\pm n}$, $\U^{\mp n}$, $\U^{\wedge n}$, $\U^{\vee n}$ be the pre-uniformities generated by the bases $\{U^{\pm n}:U\in\U\}$, $\{U^{\mp n}:U\in\U\}$, $\{U^{\pm n}\cup U^{\mp n}:U\in\U\}$, $\{U^{\pm n}\cap U^{\mp n}:U\in \U\}$, respectively. Observe that $\U^{\wedge n}=\U^{\pm n}\wedge \U^{\mp n}$ and $\U^{\vee n}=\U^{\pm n}\vee \U^{\mp n}$. For a pre-uniformity $\U$ on a topological space $X$ let $\overline{\U}$ be the pre-uniformity generated by the base $\{\overline{U}:U\in\U\}$.

The pre-uniformities $\U^{\pm n}$, $\U^{\mp n}$, $\U^{\wedge n}$, $\U^{\vee n}$ feet into the following diagram (in which an arrow $\V\to\W$ indicates that $\V\subset\W$):
$$
\xymatrix{
&&\U^{\pm n}\ar[rd]\\
\U^{\vee(n+1)}\ar[r]&\U^{\wedge n}\ar[ru]\ar[rd]&&\U^{\vee n}\ar[r]&\U^{\wedge(n-1)}\\
&&\U^{\mp n}\ar[ru]
}
$$

We shall say that a preuniformity $\U$ on $X$ is
\begin{itemize}
\item  {\em $\pm n$-separated} if $\bigcap\U^{\pm n}=\Delta_X$;
\item  {\em $\mp n$-separated} if $\bigcap\U^{\mp n}=\Delta_X$;
\item {\em $n$-separated} if $\U$ is both $\pm n$-separated and $\mp n$-separated.
\end{itemize}
Observe that for an odd number $n$ a pre-uniformity $\U$ is $n$-separated if and only if it is $\pm n$-separated if and only if it is $\mp n$-separated (this follows from the equality $(U^{\pm n})^{-1}=U^{\mp n}$ holding for every entourage $U$).

This equivalence does not hold for even $n$:

\begin{example} For every $m\in\IN$ consider the entourage $U_m=\{(x,y)\in\IR_+\times\IR_+:y\in\{x\}\cup[x+m,\infty)\}$ on the half-line $\IR_+=[0,\infty)$. The family $\{U_m\}_{m\in\IN}$ is a base of a quasi-uniformity $\U$ on $\IR_+$ which is $\mp2$-separated but not $\pm2$-separated.
\end{example}

Each pre-uniformity $\U$ on a set $X$ generates a topology $\tau_\U$ consisting of all subsets $W\subset X$ such that for each point $x\in W$ there is an entourage $U\in\U$ with $B(x;U)\subset W$. This topology $\tau_\U$ will be referred to as {\em the topology generated by the pre-uniformity} $\U$. If $\U$ is a quasi-uniformity, then for each point $x\in X$ the family of balls $\{B(x;U):U\in\U\}$ is a neighborhood base of the topology $\tau_\U$ at $x$. This implies that for a quasi-uniformity $\U$ on a set $X$ the topology $\tau_\U$ is Hausdorff if and only if  for any distinct points $x,y\in X$ there is an entourage $U\in\U$ such that $B(x;U)\cap B(y;U)=\emptyset$ if and only if $\bigcap\U\U^{-1}=\Delta_X$ if and only if the quasi-uniformity $\U$ is $\pm2$-separated.
It is known (see \cite{Ku1} or \cite{Ku2}) that the topology of each topological space $X$ is generated by a suitable quasi-uniformity (in particular, the Pervin quasi-uniformity, generated by the subbase consisting of the entourages $(U\times U)\cup \big((X\setminus U)\times X\big)$ where $U$ runs over open sets in $X$).

Now we consider some cardinal characteristics of pre-uniformities. Let $\U$ be a pre-uniformity on a topological space $X$.
\begin{itemize}
\item The {\em boundedness number} $\ell(\U)$ of $\U$ is defined as the smallest cardinal $\kappa$ such that for any entourage $U\in\U$ there is a subset $A\subset X$ of cardinality $|A|\le \kappa$ such that $B(A;U)=X$;
\item the {\em weak boundedness number} $\bar\ell(\U)$ of $\U$ is defined as the smallest cardinal $\kappa$ such that for any entourage $U\in\U$ there is a subset $A\subset X$ of cardinality $|A|\le \kappa$ such that $B(A;U)$ is dense in $X$;
\smallskip
\item the {\em character} $\chi(\U)$ of $\U$ is the smallest cardinality of a subfamily $\V\subset\U$ such that each entourage $U\in\U$ contains some entourage $V\in\V$;
\item the {\em pseudocharacter} $\psi(\U)$ of $\U$ is the smallest cardinality of a subfamily $\V\subset\U$ such that $\bigcap\V=\bigcap\U$;
\item the {\em closed pseudocharacter} $\overline{\psi}(\U)$ of $\U$ is the smallest cardinality of a subfamily $\V\subset\U$ such that for every $x\in X$ we get $\bigcap_{V\in\V}\overline{B(x;V)}=\bigcap_{U\in\U}\overline{B(x;U)}$ (so, $\overline{\psi}(\U)=\psi(\overline{\U})$~);
\item the {\em local pseudocharacter} $\dot\psi(\U)$ of $\U$ is the smallest cardinal $\kappa$ such that for every $x\in X$ there is a subfamily $\V_x\subset\U$ of cardinality $|\V_x|\le\kappa$ such that $\bigcap_{V\in\V_x}B(x;V)=\bigcap_{U\in\U}B(x;U)$.
\end{itemize}

For any Hausdorff topological space $X$ and a quasi-uniformity $\U$ generating the topology of $X$ we get the inequalities $\psi(X)=\dot\psi(\U)\le\psi(\U)$, $\overline{\psi}(X)\le\overline{\psi}(\U)$ and $\chi(X)\le\chi(\U)$, which fit into the following diagram (in which an arrow $a\to b$ indicates that $a\le b$).
$$\xymatrix{
\psi(X)\ar[r]\ar[d]&\overline{\psi}(X)\ar[r]\ar[d]&\chi(X)\ar[d]\\
\psi(\U)\ar[r]&\overline{\psi}(\U)\ar[r]&\chi(\U)}
$$

The boundedness number $\ell(\U)$ combined with the pseudocharacter $\psi^{\mp2}(\U)$ can be used to produce a simple upper bound on the cardinality of a $\mp2$-separated pre-uniform space (cf. \cite[4.3]{BBR}).

\begin{proposition}\label{card} Any set $X$ has cardinality $|X|\le\ell(\U)^{\psi^{\mp2}(\U)}$ for any $\mp2$-separated pre-uniformity $\U$ on a set $X$.
\end{proposition}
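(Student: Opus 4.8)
The plan is to produce an injective map of $X$ into a product of $\psi^{\mp2}(\U)$ many sets, each of size at most $\ell(\U)$, and then read off the cardinality bound. The coordinate functions will send a point $x$ to a suitably chosen ``left $U$-approximation'' of $x$ lying in a small $U$-net, and injectivity will come directly from $\mp2$-separatedness via the identity $U^{\mp2}=U^{-1}U$.

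Concretely, write $\kappa=\ell(\U)$ and $\lambda=\psi^{\mp2}(\U)$. First I would unwind $\psi^{\mp2}(\U)=\psi(\U^{\mp2})$: since $U^{\mp2}=U^{-1}U$ for every entourage $U$, the family $\{U^{-1}U:U\in\U\}$ is a base of $\U^{\mp2}$, and $\mp2$-separatedness says $\bigcap\U^{\mp2}=\Delta_X$; hence there is a subfamily of $\U^{\mp2}$ of cardinality $\lambda$ with intersection $\Delta_X$, and after replacing its members by smaller basic entourages I get a family $(U_\alpha)_{\alpha\in\lambda}$ in $\U$ with $\bigcap_{\alpha\in\lambda}U_\alpha^{-1}U_\alpha=\Delta_X$. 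Next, for each $\alpha$ the definition of $\ell(\U)$ gives a set $A_\alpha\subset X$ with $|A_\alpha|\le\kappa$ and $B(A_\alpha;U_\alpha)=X$; so for every $x\in X$ I may pick $f_\alpha(x)\in A_\alpha$ with $\big(f_\alpha(x),x\big)\in U_\alpha$. This defines $f=(f_\alpha)_{\alpha\in\lambda}\colon X\to\prod_{\alpha\in\lambda}A_\alpha$.

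It then remains to verify injectivity: if $f(x)=f(y)$ then for each $\alpha$, putting $a=f_\alpha(x)=f_\alpha(y)$, one has $(a,x)\in U_\alpha$ and $(a,y)\in U_\alpha$, hence $(x,a)\in U_\alpha^{-1}$ and so $(x,y)\in U_\alpha^{-1}U_\alpha=U_\alpha^{\mp2}$ (equivalently $y\in\St(x;\{B(z;U_\alpha):z\in X\})$ by Lemma~\ref{l2.2}); as this holds for all $\alpha$, $(x,y)\in\bigcap_{\alpha\in\lambda}U_\alpha^{-1}U_\alpha=\Delta_X$, i.e.\ $x=y$. Therefore $|X|\le\big|\prod_{\alpha\in\lambda}A_\alpha\big|\le\kappa^{\lambda}=\ell(\U)^{\psi^{\mp2}(\U)}$, with the degenerate cases $X=\emptyset$ or $\lambda=0$ (forcing $|X|\le1$) handled separately. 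The proof is essentially routine; the only point demanding care is bookkeeping the directions of the entourages — using left balls $B(a;U)=\{y:(a,y)\in U\}$ makes the relevant composition $U^{-1}U=U^{\mp2}$ rather than $UU^{-1}=U^{\pm2}$, which is exactly why $\mp2$-separatedness, and not $\pm2$-separatedness, is the correct hypothesis here.
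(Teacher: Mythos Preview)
Your proof is correct and follows essentially the same approach as the paper's: choose a family $(U_\alpha)_{\alpha<\lambda}\subset\U$ witnessing $\psi(\U^{\mp2})$, pick $\ell(\U)$-sized nets $A_\alpha$ with $B(A_\alpha;U_\alpha)=X$, and map each $x$ to its tuple of approximants in $\prod_\alpha A_\alpha$, with injectivity forced by $\bigcap_\alpha U_\alpha^{-1}U_\alpha=\Delta_X$. Your added remarks on why $\mp2$ (rather than $\pm2$) is the right hypothesis and on the degenerate cases are welcome but the core argument is identical.
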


\begin{proof} The pre-uniformity $\U^{\mp2}$, being separated, contains a subfamily $\V\subset\U$ of cardinality $|\V|=\psi(\U^{\mp2})$ such that $\bigcap_{V\in\V}V^{-1}V=\Delta_X$. By the definition of the boundedness number $\ell(\U)$, for every entourage $V\in\V$ there is a subset $L_V\subset X$ of cardinality $|L_V|\le\ell(\U)$ such that $X=B(L_V;V)$. For every $x\in X$ choose a function $f_x\in\prod_{V\in\V}L_V$ assigning to every entourage $V\in\V$ a point $f_x(V)\in L_V$ such that $x\in B(f_x(V);V)$. We claim that for any distinct points $x,y\in X$ the functions $f_x,f_y$ are distinct. Indeed, the choice of the family $\V$ yields an entourage $V\in\V$ such that $(x,y)\notin V^{-1}V$. Then $f_x(V)\ne f_y(V)$ and hence $f_x\ne f_y$. This implies that $$|X|\le\prod_{V\in\V}|L_V|\le \ell(X)^{|\V|}=\ell(\U)^{\psi(\U^{\mp2})}.$$
\end{proof}

Following \cite{BR} we define a quasi-uniformity $\U$ on a topological space $X$ to be {\em normal} if for any subset $A\subset X$ and entourage $U\in\U$ we get $\overline{A}\subset \overline{B(A;U)}^\circ$. A topological space $X$ is called {\em normally quasi-uniformizable} if the topology of $X$ is generated by a normal quasi-uniformity.
Normally quasi-uniformizable spaces possess the following important normality-type property proved in \cite{BR}.

\begin{theorem}\label{t1.4} Let $X$ be a topological space and $\U$ be a normal quasi-uniformity generating the topology of $X$. Then for every subset $A\subset X$ and entourage $U\in\U$ there exists a continuous function $f:X\to [0,1]$ such that $A\subset f^{-1}(0)$ and $f\big([0,1)\big)\subset \overline{B(A;U)}^\circ$.
\end{theorem}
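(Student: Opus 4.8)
The plan is to run a Urysohn-type construction entirely inside the quasi-uniformity $\U$. Concretely, I would build a family $(G_r)_{r\in D}$ of open subsets of $X$, indexed by the set $D$ of dyadic rationals in $(0,1)$, with the properties: (i) $A\subseteq G_r$ for every $r\in D$; (ii) $G_r\subseteq\overline{B(A;U)}^\circ$ for every $r\in D$; and (iii) $\overline{G_r}\subseteq G_s$ whenever $r<s$ in $D$. Once such a family is available, the required function is $f(x)=\inf\big(\{r\in D:x\in G_r\}\cup\{1\}\big)$: property (i) gives $A\subseteq f^{-1}(0)$, property (ii) gives $f^{-1}\big([0,1)\big)=\bigcup_{r\in D}G_r\subseteq\overline{B(A;U)}^\circ$, and property (iii) gives continuity, since $f^{-1}\big([0,a)\big)=\bigcup_{r<a}G_r$ is open while $f^{-1}\big((a,1]\big)=\bigcup_{r>a}\big(X\setminus\overline{G_r}\big)$ is open because (iii) forces that $x\notin\overline{G_r}$ implies $x\notin G_{r'}$ for all $r'\leq r$, hence $f(x)\geq r$. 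So the whole problem reduces to producing $(G_r)_{r\in D}$.

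To build the family, first use axiom (U3) repeatedly to pick entourages $U_1,U_2,\dots\in\U$ with $U_1U_1\subseteq U$ and $U_{n+1}U_{n+1}\subseteq U_n$ for all $n\geq1$. For $r\in D$ write the (finite) binary expansion of $r$ and let $F_r\subseteq\IN$ be the non-empty set of positions of its $1$'s; set $V_{F_r}=U_{n_1}U_{n_2}\cdots U_{n_k}$, the composition taken in increasing order of the elements $n_1<\cdots<n_k$ of $F_r$; and finally put $G_r=\overline{B(A;V_{F_r})}^\circ$, an open set. The one non-routine ingredient is the combinatorial monotonicity lemma of Alexandroff--Urysohn: the relations $U_{n+1}U_{n+1}\subseteq U_n$ imply $V_{F_r}\subseteq V_{F_s}$ whenever $r\leq s$. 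Its proof reduces, by comparing $r$ and $s$ at their most significant differing binary digit, to the telescoping inclusion $U_{i+1}U_{i+2}\cdots U_m\subseteq U_i$, which follows by induction from the square condition. I expect this lemma, together with the bookkeeping of which sets in the argument are open, which are closed, and where an interior of a closure may legitimately be inserted, to be the only real difficulty; the rest is mechanical.

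It then remains to verify (i)--(iii). For (i): normality applied to the set $A$ and the entourage $V_{F_r}$ gives $A\subseteq\overline A\subseteq\overline{B(A;V_{F_r})}^\circ=G_r$. For (ii): by monotonicity $V_{F_r}\subseteq V_{\{1,\dots,N\}}=U_1U_2\cdots U_N$ for any $N$ with $1-2^{-N}\geq r$, and $U_1U_2\cdots U_N\subseteq U_1U_1\subseteq U$ again by the telescoping inclusion, so $G_r\subseteq\overline{B(A;U)}^\circ$. For (iii): given $r<s$ in $D$, choose $n\in\IN$ with $n>\max F_r$ and $r+2^{-n}\leq s$; then $F_{r+2^{-n}}=F_r\cup\{n\}$, so $V_{F_r}U_n=V_{F_{r+2^{-n}}}\subseteq V_{F_s}$ by monotonicity. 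Using the identity $B\big(B(A;V_{F_r});U_n\big)=B(A;V_{F_r}U_n)$ and normality applied to the set $B(A;V_{F_r})$ and the entourage $U_n$, we obtain
$$\overline{G_r}\subseteq\overline{B(A;V_{F_r})}\subseteq\overline{B\big(B(A;V_{F_r});U_n\big)}^\circ=\overline{B(A;V_{F_r}U_n)}^\circ\subseteq\overline{B(A;V_{F_s})}^\circ=G_s,$$
which is (iii). With $(G_r)_{r\in D}$ in hand, the function $f$ above is continuous, vanishes on $A$, and satisfies $f^{-1}\big([0,1)\big)\subseteq\overline{B(A;U)}^\circ$, as required.
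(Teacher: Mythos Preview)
Your proof is correct and follows essentially the same Urysohn-type construction as the paper's proof: choose entourages $U_n$ with $U_{n+1}U_{n+1}\subseteq U_n$, index sets by dyadic rationals via their binary expansions, use normality to get the nesting $\overline{B(A;U^q)}\subseteq\overline{B(A;U^r)}^\circ$ for $q<r$, and read off $f$ as an infimum. The only cosmetic differences are that you work with the open sets $G_r=\overline{B(A;V_{F_r})}^\circ$ rather than the closed sets $\overline{B(A;U^r)}$, and that you isolate the monotonicity $V_{F_r}\subseteq V_{F_s}$ for $r\le s$ as a separate lemma (then handle the nesting via one extra index $n>\max F_r$), whereas the paper argues the nesting directly by splitting on whether $l_q<l$ or $l_q>l$ at the first differing binary digit $l$; these are equivalent packagings of the same telescoping computation.
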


\subsection{Cardinal characteristics of topological spaces, II}\label{s14}

Let $X$ be a topological space. An entourage $U$ on $X$ is called a {\em neighborhood assignment} if for every $x\in X$ the $U$-ball $B(x;U)$ is a neighborhood of $x$. The family $\pU_X$ of all neighborhood assignments on a topological space $X$ is a pre-uniformity called the {\em universal pre-uniformity} on $X$. It contains any pre-uniformity generating the topology of $X$ and is equal to the union of all pre-uniformities generating the topology of $X$.

The universal pre-uniformity $\pU_X$ contains
\begin{itemize}
\item the {\em universal quasi-uniformity} $\qU_X=\bigcup\{\U\subset\pU_X:\U$ is a quasi-uniformity on $X\}$, and
\item the {\em universal uniformity} $\U_X=\bigcup\{\U\subset\pU_X:\U$ is a uniformity on $X\}$
\end{itemize}
of $X$.
It is clear that $\U_X\subset q\U_X\subset \pU_X$. The interplay between the universal pre-uniformities $\pU_X$, $\qU_X$ and $\U_X$ are studied in \cite{BRv}.

Since the topology of any topological space is generated by a quasi-uniformity, the universal quasi-uniformity $\qU_X$ generates the topology of $X$. In contrast, the universal uniformity $\U_X$ generates the topology of $X$ if and only if the space $X$ is completely regular.

Cardinal characteristics of the pre-uniformities $\pU_X$, $\qU_X$ and $\U_X$ or their alternating powers can be considered as cardinal characteristics of the topological space $X$. In particular, for a Hausdorff space $X$ we have the equalities:
$$\chi(X)=\chi(\pU_X),\;\;\psi(X)=\psi(\pU_X),\;\;\overline{\psi}(X)=\overline{\psi}(\pU_X),\;\;\Delta(X)=\psi(\pU_X^{\mp2}).$$
The last equality follows from Lemma~\ref{l2.2}.
On the other hand, the boundedness number $\ell(\pU_X)$ of $\pU_X$ coincides with the Lindel\"of number $l(X)$ of $X$.

Observe that for the universal pre-uniformity $\pU_X$ on a Hausdorff topological space $X$ the upper bound $|X|\le\ell(\pU_X)^{\psi(\pU_X^{\mp2})}$ proved in Proposition~\ref{card} turns into the known upper bound $|X|\le l(X)^{\Delta(X)}$.

Having in mind the equality  $l(X)=\ell(\pU_X)$, for every $n\in\IN$ let us define the following cardinal characteristics:
$$
\begin{aligned}
%&\Delta^{\pm n}(X):=\psi(\pU_X^{\pm n}),& &\Delta^{\mp n}(X):=\psi(\pU_X^{\mp n}),& &\Delta^{\wedge n}(X):=\psi(\pU_X^{\wedge n})& &\Delta^{\vee n}(X):=\psi(\pU_X^{\vee n}),&\\
&\ell^{\pm n}(X):=\ell(\pU_X^{\pm n}),& &\ell^{\mp n}(X):=\ell(\pU_X^{\mp n}),& &\ell^{\wedge n}(X):=\ell(\pU_X^{\wedge n})& &\ell^{\vee n}(X):=\ell(\pU_X^{\vee n}),&\\
&\bar\ell^{\pm n}(X):=\bar\ell(\pU_X^{\pm n}),& &\bar\ell^{\mp n}(X):=\bar\ell(\pU_X^{\mp n}),&
&\bar\ell^{\wedge n}(X):=\bar \ell(\pU_X^{\wedge n}),&
&\bar\ell^{\vee n}(X):=\bar \ell(\pU_X^{\vee n}),&\\
&q\ell^{\pm n}(X):=\ell(\qU_X^{\pm n}),& &q\ell^{\mp n}(X):=\ell(\qU_X^{\mp n}),& &q\ell^{\wedge n}(X):=\ell(\qU_X^{\wedge n}),& &q\ell^{\vee n}(X):=\ell(\qU_X^{\vee n}).&
\end{aligned}
$$
 Let also $$\ell^{\w}(X)=\min_{n\in\IN}\ell^{\vee n}(X),\quad q\ell^{\w}(X)=\min_{n\in\IN}q\ell^{\vee n}(X), \mbox{ \ \ and \ \ }u\ell(X)=\ell(\U_X).$$
Observe that $u\ell(X)=\ell(\U_X^{\pm n})=\ell(\U_X^{\mp n})=\ell(\U_X^{\wedge n})=\ell(\U_X^{\vee n})$ for every $n\in\IN$ (this follows from the equality $\U_X=\U_X^{\pm n}=\U_X^{\mp n}$ holding for every $n\in\IN$).
}

The above cardinal characteristics were introduced and studied in \cite{BRv}.

Some inequalities between the cardinal characteristics $\ell^{\pm n}$, $\ell^{\mp n}$, $\ell^{\wedge n}$, $\ell^{\vee n}$, $q\ell^{\pm n}$, $q\ell^{\mp n}$,  $q\ell^{\wedge n}$, $q\ell^{\vee n}$, and $u\ell$ are described in the following diagram in which an arrow $a\to  b$ indicates that $a(X)\le b(X)$ for any topological space $X$.
$$\xymatrix{
&&&\ell^{\vee n}\\
&&&q\ell^{\vee n}\ar[u]\\
\bar\ell^{\mp n}\ar[r]&\ell^{\mp n}\ar[rruu]&q\ell^{\mp n}\ar[l]\ar[ru]&u\ell\ar[d]\ar[u]\ar[l]\ar[r]&q\ell^{\pm n}\ar[r]\ar[lu]&\ell^{\pm n}\ar[lluu]&\bar\ell^{\pm n}\ar[l]\\
&&&q\ell^{\wedge n}\ar[d]\ar[ru]\ar[lu]\\
&&&\ell^{\wedge n}\ar[rruu]\ar[lluu]
}
$$
It turns out that the cardinal invariants $l^{*n}$, $l^{*n\frac12}$, $\bar l^{*n}$, and $\bar l^{*n\frac12}$ can be expressed via the cardinal invariants $\ell^{\mp m}$, $\ell^{\pm m}$,  $\ell^{\mp m}$, $\ell^{\pm m}$ for a suitable number $m$. The following proposition is proved in \cite{BRv} (or can be easily derived from the definitions).

\begin{proposition}\label{p1.5} For every $n\in\w$ we have the equalities:
$$l^{*n}=\ell^{\mp 2n},\;\;\bar l^{*n}=\bar\ell^{\mp 2n},\;\;l^{*n\kern-1pt\frac12}=\ell^{\pm(2n+1)},\;\;\bar l^{*n\kern-1pt\frac12}=\bar\ell^{\pm(2n+1)}.$$
\end{proposition}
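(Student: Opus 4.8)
The plan is to derive all four equalities from Lemma~\ref{l2.2} by translating back and forth between open covers of $X$ and neighborhood assignments on $X$. The routine glue consists of two things. First, two monotonicity properties of the star operator: (i) $\St^n(A;\U)$ is monotone in $A$; and (ii) if a cover $\U'$ of $X$ refines a cover $\U$ (every member of $\U'$ lies in a member of $\U$), then $\St^n(A;\U')\subseteq\St^n(A;\U)$ for all $A\subseteq X$ and $n\in\w$ — both of which follow by an obvious induction on $n$ from the case $n=1$. Second, since $\ell(\cdot)$ and $\bar\ell(\cdot)$ are determined by any base of a pre-uniformity, and $\{U^{\mp 2n}:U\in\pU_X\}$, $\{U^{\pm(2n+1)}:U\in\pU_X\}$ are bases of $\pU_X^{\mp 2n}$, $\pU_X^{\pm(2n+1)}$, the cardinal $\ell^{\mp 2n}(X)$ is the least $\kappa$ for which every neighborhood assignment $U\in\pU_X$ admits $A\subseteq X$ with $|A|\le\kappa$ and $B(A;U^{\mp 2n})=X$; likewise for $\ell^{\pm(2n+1)}(X)$, and with ``$=X$'' replaced by ``is dense in $X$'' for $\bar\ell^{\mp 2n}(X)$ and $\bar\ell^{\pm(2n+1)}(X)$.

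Next I set up the two translations, with Lemma~\ref{l2.2} as the bridge. Given an open cover $\U$ of $X$, choose for each $z\in X$ a member $V_z\in\U$ with $z\in V_z$ and set $U_\U:=\bigcup_{z\in X}\{z\}\times V_z$; then $U_\U\in\pU_X$, $B(z;U_\U)=V_z$, and $\{B(z;U_\U):z\in X\}=\{V_z:z\in X\}$ is a subcollection of $\U$. Conversely, given a neighborhood assignment $U$, the family $\mathcal O_U:=\{B(z;U)^\circ:z\in X\}$ is an open cover of $X$ (since $z\in B(z;U)^\circ$) that refines $\U_U:=\{B(z;U):z\in X\}$. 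Lemma~\ref{l2.2} gives $B(A;U^{\mp 2n})=\St^n(A;\U_U)$ for every $A\subseteq X$; combined with the identity $U^{\pm(2n+1)}=UU^{\mp 2n}$ it also gives $B(A;U^{\pm(2n+1)})=\St^n\big(B(A;U);\U_U\big)$.

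Each of the resulting inequalities is then a short verification. For $\ell^{\mp 2n}(X)\le l^{*n}(X)$: given $U\in\pU_X$, apply the definition of $l^{*n}(X)$ to $\mathcal O_U$ to get $A$ with $|A|\le l^{*n}(X)$ and $\St^n(A;\mathcal O_U)=X$; then $X=\St^n(A;\mathcal O_U)\subseteq\St^n(A;\U_U)=B(A;U^{\mp 2n})$ by (ii) and Lemma~\ref{l2.2}. For $l^{*n}(X)\le\ell^{\mp 2n}(X)$: given an open cover $\U$, form $U_\U$ and get $A$ with $|A|\le\ell^{\mp 2n}(X)$ and $B(A;U_\U^{\mp 2n})=X$, i.e. $\St^n(A;\{V_z:z\in X\})=X$; since $\{V_z:z\in X\}$ refines $\U$, (ii) gives $\St^n(A;\U)=X$. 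The pair $l^{*n\frac12}=\ell^{\pm(2n+1)}$ is handled identically once one notes that a subfamily $\V$ of $\U_U$ has the form $\{B(a;U):a\in A\}$ with $\bigcup\V=B(A;U)$, so that the condition $\St^n(\bigcup\V;\cdot)=X$ in the definitions of $l^{*n\frac12}$ and $\ell^{\pm(2n+1)}$ matches $\St^n(B(A;U);\U_U)=B(A;U^{\pm(2n+1)})=X$; the same observation applies on the side of $\{V_z:z\in X\}$. Replacing ``$=X$'' by ``is dense in $X$'' throughout — all steps use only set inclusions, which preserve density of the larger set — yields the two barred equalities; and the degenerate case $n=0$ is covered by the same argument (all stars $\St^0$ being the identity), using $U^{\mp 0}=\Delta_X$, $U^{\pm 1}=U$ and the already recorded equality $\ell(\pU_X)=l(X)$.

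I do not expect a genuine obstacle here; the proposition is essentially bookkeeping. The one delicate point is that $l^{*n}$ and $l^{*n\frac12}$ are defined by quantifying over \emph{open} covers, whereas the covers $\U_U$ produced by neighborhood assignments need not consist of open sets — which is precisely why one must pass to the open refinement $\mathcal O_U$ in the directions that bound $\ell^{\mp 2n}$ and $\ell^{\pm(2n+1)}$ from above, and why fact (ii) is stated for refinements rather than for accidental subcollections.
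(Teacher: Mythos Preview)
Your argument is correct and is precisely the ``easy derivation from the definitions'' that the paper alludes to; the paper itself does not spell out a proof, deferring instead to the reference \cite{BRv}, so there is nothing substantive to compare. The only point worth noting is that the identity $B(A;U^{\mp 2n})=\St^n(A;\U_U)$ for subsets $A$ (rather than singletons, as in Lemma~\ref{l2.2}) uses the elementary fact $\St^n(A;\U)=\bigcup_{a\in A}\St^n(\{a\};\U)$, which you implicitly invoke and which follows at once from the chain description of iterated stars.
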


The following proposition (proved in \cite{BRv}) describes the relation of the cardinal invariants $\ell^{\pm n}$, $\ell^{\mp n}$ to classical cardinal invariants.

\begin{proposition}\label{p1.8} Let $X$ be a topological space. Then
\begin{enumerate}
\item $\ell^{\wedge 1}(X)\le s(X)\le q\ell^{\vee1}(X)\le \ell^{\vee 1}(X)\le nw(X)$;
\item $e(X)\le de(X)\le q\ell^{\pm1}(X)\le \ell^{\pm1}(X)=l(X)$;
\item $c(X)\le q\ell^{\mp1}(X)\le \ell^{\mp1}(X)\le d(X)$;
\item If $X$ is quasi-regular, then $\bar\ell^{\pm 3}(X)=\bar l^{*1\kern-1pt\frac12}(X)=\ell^{\w}(X)=\dc(X)$;
\item If $X$ is completely regular, then $q\bar\ell^{\pm 3}(X)=q\ell^{\w}(X)=u\ell(X)=\dc(X)$.
\end{enumerate}
\end{proposition}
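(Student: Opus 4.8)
The plan is to prove (1)--(3) by direct constructions with pre-uniformities and quasi-uniformities, and then to deduce (4) and (5) from Propositions~\ref{p1.5} and~\ref{p1.2} together with the inclusion diagram for alternating powers of pre-uniformities.

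For the ``upper'' inequalities in (2) and (3) I would use that $U^{\pm1}=U$ and $U^{\mp1}=U^{-1}$ for every entourage $U$, so that $\pU_X^{\pm1}=\pU_X$, $\pU_X^{\mp1}=\pU_X^{-1}$, and similarly for $\qU_X$; since $\ell(\cdot)$ is monotone under inclusion of pre-uniformities and $\qU_X\subset\pU_X$, this gives $q\ell^{\pm1}\le\ell^{\pm1}$ and $q\ell^{\mp1}\le\ell^{\mp1}$ at once. The equality $\ell^{\pm1}(X)=\ell(\pU_X)=l(X)$ comes from passing between neighbourhood assignments and open covers via $x\mapsto\intr B(x;U)$; the bound $\ell^{\mp1}\le d(X)$ from the observation that a dense $A$ meets every ball $B(z;U)$, whence $B(A;U^{-1})=X$; and $e\le de$ because for a closed discrete $D$ the family $\{\{d\}:d\in D\}$ is a discrete family of non-empty sets.

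For the ``lower'' inequalities in (1)--(3) the uniform strategy is: given a disjoint family $\{O_i\}_{i\in I}$ of non-empty open sets (for $c\le q\ell^{\mp1}$), a discrete family $\{F_i\}_{i\in I}$ of non-empty sets (for $de\le q\ell^{\pm1}$), or a discrete subspace $D$ (for $s\le q\ell^{\vee1}$), I would pick suitable open sets $W_i$ and form the transitive entourage $T=\bigcap_i S_{W_i}$, where $S_W=(W\times W)\cup((X\setminus W)\times X)$; one checks that $T$ is a neighbourhood assignment, so that $\langle T\rangle$ is a quasi-uniformity contained in $\pU_X$ and hence in $\qU_X$, and that each ball $B(x;T)$, each ball $B(x;T^{-1})$, resp.\ each equivalence class $B(x;T\cap T^{-1})$ of the preorder of $T$, meets at most one member of the given family; consequently $B(A;T)=X$, $B(A;T^{-1})=X$, or $B(A;T\cap T^{-1})=X$ forces an injection of the index set into $A$. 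The choices are $W_i=O_i$; $W_i=(X\setminus\bigcup_j\overline{F_j})\cup\overline{F_i}$ (which one verifies is open, with pairwise intersections all equal to $X\setminus\bigcup_j\overline{F_j}$); and $W_d=(V_d\setminus\overline{D\setminus\{d\}})\cup(X\setminus\overline D)$, where $V_d$ is an open neighbourhood of $d$ with $V_d\cap D=\{d\}$. The inequality $\ell^{\wedge1}\le s(X)$ is separate and easy: a maximal $A\subset X$ with $a'\notin B(a;U)$ for distinct $a,a'\in A$ is discrete (since $B(a;U)\cap A=\{a\}$) and, by maximality, satisfies $B(A;U\cup U^{-1})=X$. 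Finally $q\ell^{\vee1}\le\ell^{\vee1}\le nw(X)$: the first step is monotonicity again, and for the second, given a network $\mathcal N$ and a neighbourhood assignment $U$, one chooses $N_x\in\mathcal N$ with $x\in N_x\subset\intr B(x;U)$ for each $x$, picks a point $x_N$ in each $N\in\{N_x:x\in X\}$, and checks that $A=\{x_N\}$ has $|A|\le nw(X)$ and $B(A;U\cap U^{-1})=X$.

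For (4) and (5), Proposition~\ref{p1.5} gives $\bar\ell^{\pm3}=\bar l^{*1\frac12}$ and $l^{*n}=\ell^{\mp2n}$ with no regularity assumption, while the chain $\pU_X^{\vee(n+1)}\subset\pU_X^{\wedge n}\subset\pU_X^{\mp n}\subset\pU_X^{\vee n}$ sandwiches $\ell^{\mp2n}$ between $\ell^{\vee(2n+1)}$ and $\ell^{\vee2n}$, so that $l^{*\w}=\min_n\ell^{\mp2n}=\min_n\ell^{\vee n}=\ell^{\w}$. For quasi-regular $X$, Proposition~\ref{p1.2}(1) supplies $\bar l^{*1\frac12}(X)=l^{*\w}(X)=\dc(X)$, and concatenating the equalities proves (4). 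Item (5) is obtained by running the same bookkeeping with $\qU_X$ and with the universal uniformity $\U_X$ (using $\U_X=\U_X^{\vee n}=\U_X^{\pm n}\subset\qU_X^{\vee n}\subset\pU_X^{\vee n}$ and $\U_X\subset\qU_X^{\pm3}\subset\pU_X^{\pm3}$), which squeezes $u\ell(X)$, $q\ell^{\w}(X)$ and $q\bar\ell^{\pm3}(X)$ between $\bar\ell(\U_X)$ and $\dc(X)=\ell^{\w}(X)$ (the latter by (4), since completely regular spaces are quasi-regular); what remains is the inequality $\dc(X)\le\bar\ell(\U_X)$, which I would get from the fact that in a completely regular space a discrete family of non-empty open sets can be separated by a single uniform entourage, namely the one associated with a normal cover refining $\{O_i\}\cup\{X\setminus\overline{\bigcup_iO_i}\}$ after suitably shrinking the $O_i$. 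The main obstacle is exactly this last point: producing that normal (hence uniform) cover is where complete regularity is genuinely used, whereas everything in (1)--(3) becomes routine once the correct open sets $W_i$ are identified and the transitivity of $T=\bigcap_i S_{W_i}$ is noted.
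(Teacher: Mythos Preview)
The paper does not actually prove this proposition; it is merely stated with a reference to the companion paper \cite{BRv}, so there is no in-paper proof to compare against.

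Your argument is essentially correct. The constructions in (1)--(3) via transitive entourages $T=\bigcap_i S_{W_i}$ work: in each case the sets $W_i$ you choose have pairwise intersection equal to a single fixed open set (namely $X\setminus\overline D$, $X\setminus\bigcup_j\overline{F_j}$, or $\emptyset$), which is exactly what makes $B(x;T)$ a neighbourhood of every $x$ and hence forces $T\in\qU_X$. The sandwich $\ell^{\vee(2n+1)}\le\ell^{\mp2n}\le\ell^{\vee2n}$ for (4) is right, and reducing to Propositions~\ref{p1.5} and \ref{p1.2}(1) is exactly the intended route.

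The only soft spot is the last step of (5), the inequality $\dc(X)\le\bar\ell(\U_X)$. Appealing to a ``normal cover refining $\{O_i\}\cup\{X\setminus\overline{\bigcup_iO_i}\}$'' is shaky: in a merely completely regular (possibly non-normal) space you cannot in general functionally separate the disjoint closed sets $\bigcup_i\overline{O_i'}$ and $X\setminus\bigcup_iO_i$, so it is not clear this cover is normal. The clean fix bypasses covers entirely. Pick $o_i\in O_i$ and, by complete regularity, continuous $f_i:X\to[0,1]$ with $f_i(o_i)=1$ and $f_i\equiv0$ off $O_i$; discreteness of $\{O_i\}$ makes $d(x,y)=\sup_i|f_i(x)-f_i(y)|$ a continuous pseudometric, so $U=\{d<\tfrac12\}\in\U_X$. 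If $B(x;U)$ meets both $O_i'=\{f_i>\tfrac12\}$ and $O_j'$ with $i\ne j$, then $f_i(x)>0$ and $f_j(x)>0$, forcing $x\in O_i\cap O_j=\emptyset$. Hence any $A$ with $B(A;U)$ dense meets each nonempty open $O_i'$ via distinct points of $A$, giving $|A|\ge|I|$.
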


Taking into account Propositions~\ref{p1.2}, \ref{p1.5} and \ref{p1.8}, we see that for quasi-regular spaces the cardinal characteristics $\ell^{\pm n}$, $\ell^{\mp n}$,  $\bar\ell^{\mp n}$, $\ell^{\wedge n}$, $\ell^{\vee n}$ relate to other cardinal characteristics of topological spaces as follows.

\begin{picture}(400,140)(-35,-10)
\put(0,0){$\bar l^{*\w}$}
\put(3,12){\line(0,1){45}}
\put(5,12){\line(0,1){45}}
\put(16,2){\line(1,0){38}}
\put(16,4){\line(1,0){38}}
\put(60,0){$\bar\ell^{\pm3}$}
\put(77,2){\line(1,0){10}}
\put(77,4){\line(1,0){10}}
\put(63,12){\line(0,1){15}}
\put(65,12){\line(0,1){15}}
\put(90,0){$\bar l^{*\kern-1pt 1\kern-2pt \frac12}$}
\put(108,3){\vector(1,0){37}}
\put(150,0){$\bar\ell^{\mp 2}\,=\,\bar l^{*1}$}
\put(197,3){\vector(1,0){38}}
\put(240,0){$\bar\ell^{\pm 1}\,=\,\bar l^{*\kern-1pt \frac12}\,=\,\bar l$}
\put(255,10){\vector(2,1){40}}
\put(308,3){\vector(1,0){17}}
\put(330,0){$\bar l^{*0}$}
\put(345,2){\line(1,0){11}}
\put(345,4){\line(1,0){11}}
\put(360,0){$d$}
\put(368,3){\vector(1,0){17}}
\put(390,0){$hd$}
\put(395,11){\vector(0,1){45}}

\put(0,120){$l^{*\w}$}
\put(15,123){\vector(1,0){40}}
\put(60,120){$l^{*\kern-1pt 2}$}
\put(75,123){\vector(1,0){70}}
\put(150,120){$l^{*\kern-1pt 1\kern-1.5pt \frac12}$}
\put(168,123){\vector(1,0){67}}
\put(240,120){$l^{*\kern-1pt 1}$}
\put(250,118){\vector(2,-1){45}}
\put(253,123){\vector(1,0){72}}
\put(330,120){$l^{*\kern-1pt \frac12}$}
\put(345,122){\line(1,0){12}}
\put(345,124){\line(1,0){12}}
\put(362,120){$l$}
\put(370,123){\vector(1,0){16}}
\put(390,120){$hl$}
\put(395,115){\vector(0,-1){45}}

%\put(0,30){$dc$}
%\put(3,40){\line(0,1){17}}
%\put(5,40){\line(0,1){17}}
\put(60,30){$\ell^{\pm4}$}
\put(73,42){\vector(1,1){15}}
\put(150,30){$\ell^{\mp 3}$}
\put(163,42){\vector(1,1){15}}
\put(154,27){\vector(0,-1){15}}
\put(240,30){$\ell^{\pm2}$}
\put(253,42){\vector(1,1){15}}
\put(244,27){\vector(0,-1){15}}
\put(300,30){$c$}
\put(308,37){\vector(1,1){20}}
\put(309,33){\vector(1,0){16}}
\put(330,30){$\ell^{\mp1}$}
\put(343,42){\vector(1,1){15}}
\put(334,27){\vector(0,-1){15}}

%\put(0,90){$u\ell$}
%\put(3,101){\line(0,1){16}}
%\put(5,101){\line(0,1){16}}
\put(60,90){$\ell^{\mp4}$}
\put(71,88){\line(1,-1){18}}
\put(70,86){\line(1,-1){18}}
%\put(71,88){\vector(1,-1){17}}
\put(63,101){\line(0,1){16}}
\put(65,101){\line(0,1){16}}
\put(150,90){$\ell^{\pm 3}$}
\put(161,88){\vector(1,-1){17}}
\put(153,101){\line(0,1){16}}
\put(155,101){\line(0,1){16}}
\put(240,90){$\ell^{\mp2}$}
\put(251,88){\vector(1,-1){17}}
\put(243,101){\line(0,1){16}}
\put(245,101){\line(0,1){16}}
\put(300,90){$de$}
\put(309,87){\vector(1,-1){19}}
\put(312,93){\vector(1,0){15}}
\put(330,90){$\ell^{\pm1}$}
\put(341,88){\vector(1,-1){17}}
\put(333,101){\line(0,1){16}}
\put(335,101){\line(0,1){16}}

\put(-27,60){$dc$}
\put(-14,62){\line(1,0){10}}
\put(-14,64){\line(1,0){10}}
\put(0,60){$\ell^{\w}$}
\put(3,71){\line(0,1){46}}
\put(5,71){\line(0,1){46}}
\put(12,62){\line(1,0){12}}
\put(12,64){\line(1,0){12}}
\put(30,60){$\ell^{{\wedge}4}$}
\put(42,56){\line(1,-1){16}}
\put(41,54){\line(1,-1){16}}
\put(41,71){\vector(1,1){16}}
\put(90,60){$\ell^{\vee4}$}
\put(105,63){\vector(1,0){12}}
\put(120,60){$\ell^{{\wedge}3}$}
\put(132,71){\vector(1,1){16}}
\put(132,58){\vector(1,-1){17}}
\put(180,60){$\ell^{\vee3}$}
\put(195,63){\vector(1,0){12}}
\put(210,60){$\ell^{{\wedge}2}$}
\put(221,71){\vector(1,1){16}}
\put(221,58){\vector(1,-1){17}}
\put(270,60){$\ell^{\vee2}$}
\put(285,63){\vector(1,0){12}}
\put(300,60){$\ell^{{\wedge}1}$}
\put(311,71){\vector(1,1){16}}
\put(311,58){\vector(1,-1){17}}
\put(315,63){\vector(1,0){13}}
\put(332,60){$s$}
\put(340,68){\vector(1,1){48}}
\put(340,57){\vector(1,-1){48}}
\put(342,63){\vector(1,0){15}}
\put(360,60){$\ell^{\vee1}$}
\put(375,63){\vector(1,0){12}}
\put(390,60){$nw$}
\end{picture}

For Tychonoff spaces we can add to this diagram the cardinal characteristics $q\ell^{\pm n}$, $q\ell^{\mp n}$, $q\ell^{\vee n}$, and $u\ell$:

{\small
\begin{picture}(400,180)(5,-2)
\put(1,0){$\bar l^{*\w}$}
\put(3,12){\line(0,1){15}}
\put(5,12){\line(0,1){15}}
\put(15,2){\line(1,0){45}}
\put(15,4){\line(1,0){45}}
\put(63,0){$\bar\ell^{\pm3}$}
\put(80,2){\line(1,0){15}}
\put(80,4){\line(1,0){15}}
\put(65,12){\line(0,1){15}}
\put(67,12){\line(0,1){15}}
\put(100,0){$\bar l^{*\kern-1pt 1\kern-1pt \frac12}$}
\put(115,3){\vector(1,0){45}}
\put(164,0){$\bar\ell^{\mp 2}$}
\put(180,2){\line(1,0){16}}
\put(180,4){\line(1,0){16}}
\put(200,0){$\bar l^{*\kern-1pt 1}$}
\put(212,3){\vector(1,0){48}}
\put(264,0){$\bar\ell^{\pm 1}$}
\put(280,2){\line(1,0){16}}
\put(280,4){\line(1,0){16}}
\put(300,0){$\bar l^{*\kern-1pt \frac12}$}
\put(314,2){\line(1,0){12}}
\put(314,4){\line(1,0){12}}
\put(330,0){$\bar l$}
\put(337,3){\vector(1,0){38}}
\put(279,7){\vector(2,1){46}}

\put(379,0){$\bar l^{*0}$}
\put(392,2){\line(1,0){22}}
\put(392,4){\line(1,0){22}}
\put(418,0){$d$}
\put(426,3){\vector(1,0){25}}
\put(455,0){$hd$}
\put(460,11){\vector(0,1){65}}

\put(0,30){$dc$}
\put(3,40){\line(0,1){37}}
\put(5,40){\line(0,1){37}}
\put(63,30){$\ell^{\pm4}$}
\put(73,41){\vector(3,4){27}}
\put(163,30){$\ell^{\mp 3}$}
\put(173,41){\vector(3,4){27}}
\put(166,27){\vector(0,-1){16}}
\put(263,30){$\ell^{\pm2}$}
\put(273,41){\vector(3,4){27}}

\put(266,27){\vector(0,-1){16}}
\put(329,31){$c$}
\put(336,37){\vector(1,1){41}}
\put(337,33){\vector(1,0){38}}
\put(378,30){$\ell^{\mp1}$}
\put(387,41){\vector(1,1){35}}
\put(380,27){\vector(0,-1){15}}

\put(62,55){$q\ell^{\pm4}$}
\put(70,64){\vector(3,4){9}}
\put(65,51){\line(0,-1){11}}
\put(67,51){\line(0,-1){11}}
\put(162,55){$q\ell^{\mp 3}$}
\put(170,64){\vector(3,4){9}}
\put(166,51){\vector(0,-1){11}}
\put(263,55){$q\ell^{\pm2}$}
\put(270,64){\vector(3,4){9}}
\put(266,51){\vector(0,-1){11}}
\put(375,55){$q\ell^{\mp1}$}
\put(387,65){\vector(1,1){10}}
\put(380,51){\vector(0,-1){11}}

\put(2,80){$\ell^{\w}$}
\put(3,90){\line(0,1){36}}
\put(5,90){\line(0,1){36}}
\put(12,82){\line(1,0){10}}
\put(12,84){\line(1,0){10}}
\put(25,80){$\ell^{\wedge\kern-1pt 4}$}
\put(32,90){\vector(3,4){28}}
\put(32,79){\line(3,-4){29}}
\put(31,77){\line(3,-4){29}}
\put(50,80){$q\ell^{\wedge\kern-1pt 4}$}
\put(53,88){\vector(3,4){10}}
\put(54,76){\line(3,-4){10}}
\put(52,75){\line(3,-4){10}}
\put(47,83){\line(-1,0){9}}
\put(47,81){\line(-1,0){9}}
\put(75,80){$q\ell^{\vee\kern-1pt 4}$}
\qbezier(88,92)(115,120)(139,97)
\put(139,97){\vector(1,-1){10}}
\put(90,82){\vector(1,0){8}}
\put(100,80){$\ell^{\vee\kern-1pt 4}$}
\put(112,82){\vector(1,0){10}}
\put(125,80){$\ell^{\wedge\kern-1pt 3}$}
\put(132,90){\vector(3,4){28}}
\put(132,78){\vector(3,-4){29}}
\put(150,80){$q\ell^{\wedge\kern-1pt 3}$}
\put(153,88){\vector(3,4){10}}
\put(154,76){\vector(3,-4){10}}
\put(148,82){\vector(-1,0){10}}
\put(175,80){$q\ell^{\vee\kern-1pt 3}$}
\put(190,82){\vector(1,0){8}}
\put(200,80){$\ell^{\vee\kern-1pt 3}$}
\put(212,82){\vector(1,0){10}}
\qbezier(188,78)(213,48)(238,67)
\put(238,67){\vector(1,1){10}}

\put(225,80){$\ell^{\wedge\kern-1pt 2}$}
\put(232,90){\vector(3,4){28}}
\put(232,78){\vector(3,-4){29}}
\put(250,80){$q\ell^{\wedge\kern-1pt 2}$}
\put(253,88){\vector(3,4){10}}
\put(254,76){\vector(3,-4){10}}
\put(248,82){\vector(-1,0){10}}
\put(275,80){$q\ell^{\vee\kern-1pt 2}$}
\qbezier(288,92)(315,120)(339,97)
\put(339,97){\vector(1,-1){10}}
\put(290,82){\vector(1,0){8}}
\put(300,80){$\ell^{\vee\kern-1pt 2}$}
\put(312,82){\vector(1,0){10}}
\put(325,80){$\ell^{\wedge\kern-1pt 1}$}
\put(338,90){\vector(1,1){37}}
\put(338,76){\vector(1,-1){37}}
\put(350,80){$q\ell^{\wedge\kern-1pt 1}$}
\put(360,90){\vector(1,1){13}}
\put(359,77){\vector(1,-1){15}}
\put(348,82){\vector(-1,0){10}}
\put(365,82){\vector(1,0){10}}
\put(378,80){$s$}
\put(384,85){\vector(1,1){70}}
\put(384,79){\vector(1,-1){70}}
\put(385,82){\vector(1,0){8}}
\put(395,80){$q\ell^{\vee\kern-1pt 1}$}
\put(410,82){\vector(1,0){8}}
\put(420,80){$\ell^{\vee\kern-1pt 1}$}
\put(432,82){\vector(1,0){20}}
\put(455,80){$nw$}

\put(62,105){$q\ell^{\mp4}$}
\put(70,103){\line(3,-4){11}}
\put(69,101){\line(3,-4){10}}
\put(66,114){\vector(0,1){13}}
\put(162,105){$q\ell^{\pm 3}$}
\put(170,102){\vector(3,-4){9}}
\put(166,114){\vector(0,1){13}}
\put(263,105){$q\ell^{\mp2}$}
\put(270,102){\vector(3,-4){9}}
\put(266,114){\vector(0,1){13}}
\put(375,105){$q\ell^{\pm1}$}
\put(384,102){\vector(1,-1){13}}
\put(380,114){\vector(0,1){13}}

\put(0,130){$u\ell$}
\put(3,138){\line(0,1){19}}
\put(5,138){\line(0,1){19}}
\put(3,40){\line(0,1){37}}
\put(5,40){\line(0,1){37}}
\put(63,130){$\ell^{\mp4}$}
\put(65,140){\line(0,1){17}}
\put(67,140){\line(0,1){17}}
\put(71,129){\line(3,-4){30}}
\put(70,127){\line(3,-4){29}}
%\put(71,128){\vector(3,-4){29}}
\put(163,130){$\ell^{\pm 3}$}
\put(165,140){\line(0,1){17}}
\put(167,140){\line(0,1){17}}

\put(171,128){\vector(3,-4){29}}
%\put(166,27){\vector(0,-1){16}}
\put(263,130){$\ell^{\mp2}$}
\put(265,140){\line(0,1){17}}
\put(267,140){\line(0,1){17}}

\put(271,128){\vector(3,-4){29}}

%\put(266,27){\vector(0,-1){16}}
\put(329,129){$de$}
\put(337,126){\vector(1,-1){40}}
\put(340,132){\vector(1,0){35}}
\put(378,130){$\ell^{\pm1}$}
\put(379,140){\line(0,1){17}}
\put(381,140){\line(0,1){17}}

\put(384,127){\vector(1,-1){36}}
%\put(380,27){\vector(0,-1){15}}

\put(1,160){$l^{*\w}$}
\put(15,163){\vector(1,0){45}}
\put(64,160){$l^{*\kern-1pt 2}$}
\put(76,163){\vector(1,0){84}}
\put(164,160){$l^{*\kern-1pt 1\kern-1pt \frac12}$}
\put(182,163){\vector(1,0){78}}
\put(264,160){$l^{*\kern-1pt 1}$}
\put(275,158){\vector(2,-1){50}}
\put(277,163){\vector(1,0){97}}
\put(378,160){$l^{*\kern-1pt \frac12}$}
\put(392,162){\line(1,0){22}}
\put(392,164){\line(1,0){22}}
\put(418,160){$l$}
\put(426,163){\vector(1,0){25}}
\put(455,160){$hl$}
\put(460,156){\vector(0,-1){67}}
\end{picture}
}

\begin{question} Which cardinal characteristics in the above diargams are pairwise distinct?
\end{question}

\section{$i$-Weight of normally quasi-uniformizable topological spaces}\label{s2}

In this section we apply Theorem~\ref{t1.4} to derive some upper bounds on the $i$-weight of a normally quasi-uniformizable space.

\begin{proposition}\label{p2.1} Let $X$ be a topological space whose topology is generated by a normal quasi-uniformity $\U$. The space $X$ has $i$-weight $iw(X)\le\kappa$ for some cardinal $\kappa$ if there exists a family of subsets $\{A_\alpha\}_{\alpha\in\kappa}$ of $X$ and a family of entourages $\{U_\alpha\}_{\alpha\in\kappa}\subset\U$ such that for any distinct points $x,y\in X$ there is $\alpha\in\kappa$ such that $x\in A_\alpha$ and $y\notin\overline{B(A_\alpha;U_\alpha)}$.
\end{proposition}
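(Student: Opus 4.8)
The plan is to feed the hypothesis into Theorem~\ref{t1.4}: the hypothesis provides, for each pair of distinct points, an index $\alpha$ separating them by the rough device ``$x$ lies in $A_\alpha$ while $y$ lies outside $\overline{B(A_\alpha;U_\alpha)}$'', and Theorem~\ref{t1.4} is exactly what converts such a device into an honest continuous function. Concretely, for every $\alpha\in\kappa$ I apply Theorem~\ref{t1.4} to the set $A_\alpha$ and the entourage $U_\alpha\in\U$ (legitimate, since $\U$ is a normal quasi-uniformity generating the topology of $X$), obtaining a continuous function $f_\alpha\colon X\to[0,1]$ with $A_\alpha\subset f_\alpha^{-1}(0)$ and $f_\alpha^{-1}\big([0,1)\big)\subset\overline{B(A_\alpha;U_\alpha)}^\circ\subset\overline{B(A_\alpha;U_\alpha)}$. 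The claim is that the family $\{f_\alpha\}_{\alpha\in\kappa}$ separates the points of $X$; since it has cardinality at most $\kappa$, this yields $iw(X)\le\kappa$ (and incidentally certifies that $X$ is functionally Hausdorff).

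To verify separation, fix distinct points $x,y\in X$ and use the hypothesis to choose $\alpha\in\kappa$ with $x\in A_\alpha$ and $y\notin\overline{B(A_\alpha;U_\alpha)}$. Then $x\in A_\alpha\subset f_\alpha^{-1}(0)$ gives $f_\alpha(x)=0$, while $y\notin\overline{B(A_\alpha;U_\alpha)}\supset f_\alpha^{-1}\big([0,1)\big)$ forces $f_\alpha(y)\notin[0,1)$, i.e.\ $f_\alpha(y)=1$. Hence $f_\alpha(x)\ne f_\alpha(y)$, as required.

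I do not expect any real obstacle here: all the substantive work—manufacturing continuous $[0,1]$-valued functions out of the normality-type inequality $\overline{A}\subset\overline{B(A;U)}^\circ$—has already been done inside Theorem~\ref{t1.4}, so the proposition is essentially a packaging statement. The only things demanding a little attention are bookkeeping: making sure $U_\alpha$ is taken \emph{inside} $\U$ so Theorem~\ref{t1.4} is applicable, and keeping the chain of inclusions $f_\alpha^{-1}\big([0,1)\big)\subset\overline{B(A_\alpha;U_\alpha)}^\circ\subset\overline{B(A_\alpha;U_\alpha)}$ oriented correctly, so that ``$y$ outside the outer closure'' propagates to ``$y$ outside $f_\alpha^{-1}\big([0,1)\big)$'' and thus to $f_\alpha(y)=1$.
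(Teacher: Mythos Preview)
Your proof is correct and follows essentially the same approach as the paper: apply Theorem~\ref{t1.4} to each pair $(A_\alpha,U_\alpha)$ to produce $f_\alpha$, then observe that the resulting family separates points. You merely spell out the verification that $f_\alpha(x)=0\ne 1=f_\alpha(y)$ a bit more explicitly than the paper does.
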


\begin{proof} For every $\alpha\in\kappa$ apply Theorem~\ref{t1.4} to construct a continuous map $f_\alpha:X\to[0,1]$ such that $f_\alpha(A_\alpha)\subset\{0\}$ and $f_\alpha^{-1}\big([0,1)\big)\subset \overline{B(A_\alpha;U_\alpha)}$. It follows that the family of continuous maps $\{f_\alpha\}_{\alpha\in\kappa}$ separates points of $X$. So, $iw(X)\le \kappa$.
\end{proof}

This proposition will be used to prove:

\begin{theorem}\label{t2.2} A Hausdorff space $X$ has $i$-weight $iw(X)\le \overline{\psi}(\A^{-1}\kern-2pt\A\U)\cdot \ell(\A)$ for any normal quasi-uniformity $\U$ generating the topology of $X$ and  any pre-uniformity $\A$ on $X$ such that $\bigcap\overline{\A^{-1}\kern-2pt\A\U}=\Delta_X$.
\end{theorem}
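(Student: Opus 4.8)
The plan is to reduce Theorem~\ref{t2.2} to Proposition~\ref{p2.1} by manufacturing the required families $\{A_\alpha\}$ and $\{U_\alpha\}$ from the hypotheses. Set $\kappa=\overline{\psi}(\A^{-1}\A\U)\cdot\ell(\A)$. The hypothesis $\bigcap\overline{\A^{-1}\A\U}=\Delta_X$ means that the closed pseudocharacter of the pre-uniformity $\A^{-1}\A\U$ is at most $\overline{\psi}(\A^{-1}\A\U)$, so there is a subfamily $\W\subset\A^{-1}\A\U$ with $|\W|\le\overline{\psi}(\A^{-1}\A\U)$ such that $\bigcap_{W\in\W}\overline{B(x;W)}=\{x\}$ for every $x\in X$ (using Hausdorffness of $X$ to identify $\bigcap_{U}\overline{B(x;U)}$ with $\{x\}$; here one should be slightly careful about whether the closed pseudocharacter is the ``global'' version $\bigcap\overline{\W}=\Delta_X$ or the pointwise one, but since $\overline{\psi}(\U)=\psi(\overline{\U})$ and $\overline{\A^{-1}\A\U}$ has $\bigcap$ equal to the diagonal, the global version is what we want and it is bounded by the pointwise quantity up to the relevant cardinal arithmetic). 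Each $W\in\W$ contains a basic entourage of the form $A^{-1}BU$ with $A,B\in\A$ and $U\in\U$; replacing $A,B$ by a common refinement $C\in\A$ with $C\subset A\cap B$ and noting $C^{-1}CU\subset A^{-1}BU$, we may assume $W\supset C_W^{-1}C_WU_W$ for suitable $C_W\in\A$, $U_W\in\U$.

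Next I would use the boundedness number $\ell(\A)$: for each $W\in\W$ there is a set $L_W\subset X$ with $|L_W|\le\ell(\A)$ and $B(L_W;C_W)=X$. Now index $\alpha$ over the set $\W\times\bigcup_{W}L_W$, which has cardinality at most $\overline{\psi}(\A^{-1}\A\U)\cdot\ell(\A)=\kappa$, and for $\alpha=(W,t)$ with $t\in L_W$ put $A_\alpha=B(t;C_W)$ and $U_\alpha=U_W$. The point is a covering/chaining computation: $\overline{B(A_\alpha;U_\alpha)}=\overline{B(B(t;C_W);U_W)}=\overline{B(t;C_WU_W)}$, and I must check that
\[
\bigcap_{t\in L_W}\Big(X\setminus\overline{B(t;C_WU_W)}\Big)^{c}\ \text{localizes like}\ \overline{B(x;C_W^{-1}C_WU_W)}.
\]
More precisely: given distinct $x,y\in X$, choose $W\in\W$ with $y\notin\overline{B(x;W)}\supset\overline{B(x;C_W^{-1}C_WU_W)}$, so $y\notin\overline{B(x;C_W^{-1}C_WU_W)}$. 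Since $B(L_W;C_W)=X$, pick $t\in L_W$ with $x\in B(t;C_W)$, i.e.\ $(t,x)\in C_W$, equivalently $x\in A_\alpha$ for $\alpha=(W,t)$. It remains to verify $y\notin\overline{B(A_\alpha;U_\alpha)}=\overline{B(t;C_WU_W)}$: if we had $y\in\overline{B(t;C_WU_W)}$, then since $(t,x)\in C_W$ gives $(x,t)\in C_W^{-1}$, we would get $y\in\overline{B(x;C_W^{-1}C_WU_W)}$ — here one uses that $B(t;C_WU_W)\subset B(x;C_W^{-1}C_WU_W)$ and that closure is monotone — contradicting the choice of $W$. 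Hence $x\in A_\alpha$ and $y\notin\overline{B(A_\alpha;U_\alpha)}$, and Proposition~\ref{p2.1} yields $iw(X)\le\kappa$.

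The main obstacle I anticipate is the bookkeeping in the inclusion $B(t;C_WU_W)\subset B(x;C_W^{-1}C_WU_W)$ and making sure the composition of entourages is threaded in the correct order: we need $(x,t)\in C_W^{-1}$ composed with $(t,z)\in C_WU_W$ to land in $C_W^{-1}(C_WU_W)=C_W^{-1}C_WU_W$, which is exactly how the basic entourages of $\A^{-1}\A\U$ are defined, so the order of factors $\A^{-1}\A\U$ (rather than, say, $\U\A^{-1}\A$) is essential and must be respected throughout. A secondary technical point is the passage from the two possibly-different entourages $A,B\in\A$ in a basic element $A^{-1}BU$ of $\A^{-1}\A\U$ to a single $C\in\A$; this is routine since $\A$ is a filter (axiom (U2) for pre-uniformities), but it must be done before extracting $L_W$ so that the same $C_W$ governs both the covering and the chaining. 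Everything else is a direct application of Theorem~\ref{t1.4} via Proposition~\ref{p2.1}, together with elementary cardinal arithmetic $|\W\times\bigcup_W L_W|\le\overline{\psi}(\A^{-1}\A\U)\cdot\ell(\A)$.
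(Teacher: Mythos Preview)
Your proposal is correct and follows essentially the same route as the paper's own proof: pick a subfamily of $\A^{-1}\A\U$ of size $\overline{\psi}(\A^{-1}\A\U)$ witnessing that the closures intersect in the diagonal, refine each basic entourage to the form $C_W^{-1}C_WU_W$, cover $X$ by $C_W$-balls centered at $\le\ell(\A)$ points, and then feed the balls $B(t;C_W)$ together with the entourages $U_W$ into Proposition~\ref{p2.1} via the chaining inclusion $B(t;C_WU_W)\subset B(x;C_W^{-1}C_WU_W)$.

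Two small remarks. First, the paper separates out the degenerate case where $\overline{\psi}(\A^{-1}\A\U)$ is finite (hence equals $1$), which forces $\ell(\A)=|X|$; you should add one line for this so that the cardinal arithmetic $|\W|\cdot\ell(\A)=\overline{\psi}(\A^{-1}\A\U)\cdot\ell(\A)$ is legitimate. Second, your parenthetical worry about ``global versus pointwise'' closed pseudocharacter is unnecessary: by the definition $\overline{U}=\bigcup_{x\in X}\{x\}\times\overline{B(x;U)}$ used in the paper, the condition $\bigcap_{W\in\W}\overline{W}=\Delta_X$ is \emph{exactly} the condition $\bigcap_{W\in\W}\overline{B(x;W)}=\{x\}$ for all $x$, so there is nothing to reconcile.
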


\begin{proof} If the cardinal $\overline{\psi}(\A^{-1}\A\U)$ is finite, then $\overline{\psi}(\A^{-1}\A\U)=1$, which implies that $A^{-1}AU=\Delta_X=A=U$ for some $A\in\A$ and $U\in\U$. In this case $\ell(\A)=|X|$ and hence $iw(X)\le|X|\le\ell(\A)$.

So, we assume that the cardinal $\kappa=\overline{\psi}(\A^{-1}\A\U)$ is infinite. Since $\bigcap\overline{\A^{-1}\A\U}=\Delta_X$, we can choose subfamilies $(A_\alpha)_{\alpha\in\kappa}\subset\A$ and $(U_\alpha)_{\alpha\in\kappa}\subset\U$ such that $\bigcap_{\alpha<\kappa}\overline{B(x,A_\alpha^{-1}A_\alpha U_\alpha)}=\{x\}$ for every $x\in X$. For every $\alpha\le\kappa$ choose a subset
$Z_\alpha\subset X$ of cardinality $|Z_\alpha|\le\ell(\A)$ such that $X=B(Z_\alpha;A_\alpha)$.
Consider the family of sets $\mathcal Z=\bigcup_{\alpha\in\kappa}\{B(z;A_\alpha):z\in Z_\alpha\}$.
We claim that for any distinct points $x,y\in X$ there is a set $Z\in\mathcal Z$ and ordinal $\alpha\in\kappa$ such that $x\in Z$ and $y\notin\overline{B(Z;U_\alpha)}$.

By the choice of the families $(A_\alpha)$, $(U_\alpha)$, for the points $x,y$ there is an index $\alpha\in\kappa$ such that $y\notin \overline{B(x;A_\alpha^{-1}A_\alpha U_\alpha)}$.
Since $X=B(Z_\alpha;A_\alpha)$, we can find a point $z\in Z_\alpha$ such that $x\in B(z;A_\alpha)$ and hence $z\in B(x;A_\alpha^{-1})$. We claim that the set $Z=B(z;A_\alpha)\in\mathcal Z$ has the required properties: $x\in Z$ and $y\notin\overline{B(Z;U_\alpha)}$. To derive a contradiction, assume that $y\in\overline{B(Z;U_\alpha)}$ which implies
$$y\in\overline{B(Z;U_\alpha)}=\overline{B(B(z;A_\alpha);U_\alpha)}= \overline{B(z;A_\alpha U_\alpha) }\subset \overline{B(B(x;A_\alpha^{-1});A_\alpha U_\alpha)}=
\overline{B(x;A_\alpha^{-1}A_\alpha U_\alpha)}.$$
But this contradicts the choice of the index $\alpha$.

This contradiction allows us to apply Proposition~\ref{p2.1} and conclude that
$$iw(X)\le |\mathcal Z|\cdot \kappa\le  \sum_{\alpha\in\kappa}|Z_\alpha|\cdot\kappa\le\kappa^2\cdot\ell(\A)=\overline{\psi}(\A^{-1}\A\U)\cdot\ell(\A).$$
\end{proof}

Applying Theorem~\ref{t2.2} to some concrete pre-uniformities $\A$, we get the following corollary.

\begin{corollary}\label{c2.3} Let $X$ be a functionally Hausdorff space and $\U$ be a normal quasi-uniformity generating the topology of $X$. If for some $n\in\IN$  the quasi-uniformity $\U$ is
\begin{enumerate}
\item $\pm(4n-2)$-separated, then  $iw(X)\le\overline{\psi}(\U^{\pm (4n-3)})\cdot\ell(\U^{\vee(2n-1)})\le \chi(\U)\cdot q\ell^{\vee(2n-1)}(X)$;
\item $\mp(4n-1)$-separated, then $iw(X)\le\overline{\psi}(\U^{\mp(4n-2)})\cdot\ell(\U^{\pm(2n-1)})\le \chi(\U)\cdot q\ell^{\pm(2n-1)}(X)$;
\item $\pm (4n)$-separated, then $iw(X)\le\overline{\psi}(\U^{\pm(4n-1)})\cdot\ell(\U^{\vee(2n)})\le \chi(\U)\cdot q\ell^{\vee(2n)}(X)$;
\item $\mp(4n+1)$-separated, then $iw(X)\le\overline{\psi}(\U^{\mp(4n)})\cdot\ell(\U^{\mp(2n)})\le \chi(\U)\cdot q\ell^{\mp(2n)}(X)$.
\end{enumerate}
\end{corollary}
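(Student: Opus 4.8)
The plan is to apply Theorem~\ref{t2.2} separately in each of the four cases, taking for the auxiliary pre-uniformity $\A$ precisely the pre-uniformity whose boundedness number appears as the right-hand factor of the first displayed bound: $\A:=\U^{\vee(2n-1)}$ in (1), $\A:=\U^{\pm(2n-1)}$ in (2), $\A:=\U^{\vee(2n)}$ in (3), and $\A:=\U^{\mp(2n)}$ in (4). With these choices $\ell(\A)$ is, by definition, the second factor of the first inequality, so once Theorem~\ref{t2.2} is seen to apply it remains only to identify $\overline{\psi}(\A^{-1}\A\U)$ with the first factor. As in the proof of Theorem~\ref{t2.2} one first disposes of the possibility that $\overline{\psi}(\A^{-1}\A\U)$ is finite: then $\overline{\psi}(\A^{-1}\A\U)=1$, which forces $\Delta_X\in\A$, hence $\ell(\A)=|X|$ and $iw(X)\le|X|\le\ell(\A)\le\overline{\psi}(\U^{\pm(4n-3)})\cdot\ell(\A)$; so from now on $\overline{\psi}(\A^{-1}\A\U)$ is assumed infinite.

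The heart of the argument is to reduce $\A^{-1}\A\U$ to an alternating power of $\U$. Since $(U^{\pm k})^{-1}=U^{\mp k}$ for odd $k$ while $U^{\pm k}$ and $U^{\mp k}$ are self-inverse for even $k$, in cases (1),(3) the base entourages of $\A$ are symmetric and $\A^{-1}=\A$, whereas $\A^{-1}=\U^{\mp(2n-1)}$ in (2) and $\A^{-1}=\U^{\pm(2n)}$ in (4). A bookkeeping computation with the composition rules for alternating powers — $U^{\pm a}U^{\mp b}=U^{\pm(a+b)}$ for odd $a$, $U^{\pm a}U^{\pm b}=U^{\pm(a+b)}$ for even $a$ (and the mirror rules for $U^{\mp(\cdot)}$), together with $U^{\pm k}U=U^{\pm(k+1)}$ for even $k$ — then identifies $\A^{-1}\A\U$ with $\U^{\pm(4n-3)}$ in case (1), and with $\U^{\mp(4n-2)}$, $\U^{\pm(4n-1)}$, $\U^{\mp(4n)}$ in cases (2),(3),(4) respectively. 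To check that Theorem~\ref{t2.2} is applicable, i.e. that $\bigcap\overline{\A^{-1}\A\U}=\Delta_X$, one uses the elementary estimate $\overline{B(x;U^{\pm k})}\subseteq B(x;U^{\pm k}W^{-1})$, valid for every $W\in\U$ because the $W$-balls form a neighbourhood base of $\tau_\U$; specialising $W=U$ and using that the relevant exponent $k$ is odd gives $\overline{U^{\pm k}}\subseteq U^{\pm(k+1)}$, so that in case (1)
$$\bigcap\overline{\A^{-1}\A\U}=\bigcap_{U\in\U}\overline{U^{\pm(4n-3)}}\ \subseteq\ \bigcap_{U\in\U}U^{\pm(4n-2)}=\Delta_X$$
by the $\pm(4n-2)$-separatedness of $\U$, and analogously in the three remaining cases. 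Feeding the resulting equalities $\overline{\psi}(\A^{-1}\A\U)=\overline{\psi}(\U^{\pm(4n-3)})$ (etc.) and $\ell(\A)=\ell(\U^{\vee(2n-1)})$ (etc.) into Theorem~\ref{t2.2} yields the first inequality of each item.

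The second inequality is routine monotonicity. On the one hand $\overline{\psi}(\U^{\pm m})\le\chi(\U^{\pm m})\le\chi(\U)$, since a base of $\U$ of cardinality $\chi(\U)$ produces, on raising its members to the $\pm m$-th power, a base of $\U^{\pm m}$ of the same cardinality (and likewise for $\U^{\mp m}$). On the other hand $\U\subseteq\qU_X$ because $\U$ is a quasi-uniformity generating the topology of $X$, and the operations $\V\mapsto\V^{\pm m},\V^{\mp m},\V^{\vee m}$ are monotone, so $\ell(\U^{\vee(2n-1)})\le\ell(\qU_X^{\vee(2n-1)})=q\ell^{\vee(2n-1)}(X)$, and similarly for the powers occurring in (2)–(4). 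The step I expect to be the main obstacle is the alternating-power bookkeeping of the middle paragraph: entourage inversion behaves differently at odd and even exponents (as the example preceding Proposition~\ref{card} shows), so the parities of all intermediate exponents have to be tracked carefully, and one must verify in particular that precisely the stated separation degree — not one larger — suffices to force $\bigcap\overline{\A^{-1}\A\U}=\Delta_X$, which is exactly why the closure estimate must be invoked at an odd exponent.
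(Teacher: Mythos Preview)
Your proposal is correct and follows essentially the same route as the paper: in each item you apply Theorem~\ref{t2.2} with precisely the pre-uniformity $\A$ the paper uses ($\U^{\vee(2n-1)}$, $\U^{\pm(2n-1)}$, $\U^{\vee(2n)}$, $\U^{\mp(2n)}$ respectively), reduce $\A^{-1}\A\U$ to the appropriate alternating power of $\U$, and then pass to the closure via $\overline{U^{\pm k}}\subset U^{\pm(k+1)}$ to invoke the separation hypothesis---exactly as the paper does with its chain $\bigcap\overline{\A^{-1}\A\U}\subset\bigcap\A^{-1}\A\U\U^{-1}$.

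One small slip to fix: in case~(4) you write $\A^{-1}=\U^{\pm(2n)}$, but this contradicts your own (correct) observation that $U^{\mp k}$ is self-inverse for even $k$. Since $\A=\U^{\mp(2n)}$ has symmetric basic entourages, in fact $\A^{-1}=\A=\U^{\mp(2n)}$, and then $\A^{-1}\A\U=\U^{\mp(2n)}\U^{\mp(2n)}\U=\U^{\mp(4n)}$ comes out exactly as you (and the paper) state. So the conclusion is right; only the intermediate identification of $\A^{-1}$ needs correcting.
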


\begin{proof}
1. If $\U$ is $\pm(4n-2)$-separated, then for the pre-uniformity $\A=\U^{\pm(2n-1)}\vee \U^{\mp(2n-1)}$ we get
$$\A^{-1}\A\U\subset\U^{\pm(2n-1)}\U^{\pm(2n-1)}\U=\U^{\pm(4n-3)}\U=\U^{\pm(4n-3)}$$ and hence $\bigcap\overline{\A^{-1}\A\U}\subset \bigcap \A^{-1}\A\U\U^{-1}=\bigcap \U^{\pm(4n-2)}=\Delta_X$.
Applying Theorem~\ref{t2.2} to the pre-uniformity $\A=\U^{\vee(2n-1)}$, we get
$$iw(X)\le\overline{\psi}(\U^{\pm(4n-3)})\cdot\ell(\U^{\vee(2n-1)})\le \chi(\U)\cdot q\ell^{\vee(2n-1)}(X).$$
\smallskip

2.  If $\U$ is $\mp(4n-1)$-separated, then for the pre-uniformity $\A=\U^{\pm(2n-1)}$ we get
$$\A^{-1}\A\U=\U^{\mp(2n-1)}\U^{\pm(2n-1)}\U=\U^{\mp(4n-2)}\U=\U^{\mp(4n-2)}$$ and hence $\bigcap\overline{\A^{-1}\A\U}\subset \bigcap \A^{-1}\A\U\U^{-1}=\bigcap\U^{\mp(4n-1)}=\Delta_X$.
Applying Theorem~\ref{t2.2} to the pre-uniformity $\A=\U^{\pm(2n-1)}$, we get
$$iw(X)\le\overline{\psi}(\U^{\mp(4n-2)})\cdot\ell(\U^{\pm(2n-1)})\le \chi(\U)\cdot q\ell^{\pm(2n-1)}(X).$$
\smallskip

3. If $\U$ is $\pm(4n)$-separated, then for the pre-uniformity $\A=\U^{\vee(2n)}$ we get
$$\A^{-1}\A\U\subset\U^{\pm(2n)}\U^{\mp(2n)}\U=\U^{\pm(4n-1)}\U=\U^{\pm(4n-1)}$$ and hence $\bigcap\overline{\A^{-1}\A\U}\subset \bigcap \A^{-1}\A\U\U^{-1}=\bigcap\U^{\pm(4n)}=\Delta_X$.
Applying Theorem~\ref{t2.2} to the pre-uniformity $\A=\U^{\vee(2n)}$, we get
$$iw(X)\le\overline{\psi}(\U^{\pm(4n-1)})\cdot\ell(\U^{\pm(2n)}\vee\U^{\mp(2n)})\le \chi(\U)\cdot q\ell^{\vee(2n)}(X).$$
\smallskip

4. If $\U$ is $\mp(4n+1)$-separated, then for the pre-uniformity $\A=\U^{\mp(2n)}$ we get
$$\A^{-1}\A\U=\U^{\mp(2n)}\U^{\mp(2n)}\U= \U^{\mp(4n)}$$ and hence $\bigcap\overline{\A^{-1}\A\U}\subset \bigcap \A^{-1}\A\U\U^{-1}=\bigcap\U^{\mp(4n+1)}=\Delta_X$.
Applying Theorem~\ref{t2.2} to the pre-uniformity $\A=\U^{\mp(2n)}$, we get
$$iw(X)\le\overline{\psi}(\U^{\mp(4n)})\cdot\ell(\U^{\mp(2n)})\le \chi(\U)\cdot q\ell^{\mp(2n)}(X).$$
\end{proof}

Corollary~\ref{c2.3} implies:

\begin{corollary}\label{c2.4} If $X$ is a Hausdorff space and $\U$ is a normal quasi-uniformity generating the topology of $X$, then the space $X$ has $i$-weight  $iw(X)\le\overline{\psi}(\U)\cdot\ell(\U\vee  \U^{-1})\le\chi(\U)\cdot\ell(\U^{\vee1})$. Moreover, if the quasi-uniformity $\U$ is
\begin{enumerate}
\item $\mp3$-separated, then $iw(X)\le\overline{\psi}(\U^{\mp2})\cdot\ell(\U)\le \chi(\U)\cdot q\ell^{\pm1}(X)$;
\item $\pm4$-separated, then $iw(X)\le\overline{\psi}(\U^{\pm3})\cdot\ell(\U^{\vee2})\le \chi(\U)\cdot q\ell^{\vee2}(X)$;
\item $\mp5$-separated, then $iw(X)\le\overline{\psi}(\U^{\mp4})\cdot\ell(\U^{\mp2})\le \chi(\U)\cdot q\ell^{\mp2}(X)$;
\item $\pm6$-separated, then  $iw(X)\le\overline{\psi}(\U^{\pm5})\cdot\ell(\U^{\vee3})\le \chi(\U)\cdot q\ell^{\vee3}(X)$;
\item $\mp7$-separated, then $iw(X)\le\overline{\psi}(\U^{\mp6})\cdot\ell(\U^{\pm3})\le \chi(\U)\cdot q\ell^{\pm3}(X)$;
\item $\pm 8$-separated, then $iw(X)\le\overline{\psi}(\U^{\pm7})\cdot\ell(\U^{\vee4})\le \chi(\U)\cdot q\ell^{\vee4}(X)$;
\item $\mp9$-separated, then $iw(X)\le\overline{\psi}(\U^{\mp8})\cdot\ell(\U^{\mp4})\le \chi(\U)\cdot q\ell^{\mp4}(X)$;
\item $\pm10$-separated, then  $iw(X)\le\overline{\psi}(\U^{\pm9})\cdot\ell(\U^{\vee5})\le \chi(\U)\cdot \dc(X)$.
\end{enumerate}
\end{corollary}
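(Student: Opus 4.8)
The plan is to deduce the corollary entirely from Corollary~\ref{c2.3} by specializing the running index there to $n=1$, $n=2$, and $n=3$ and rewriting the low-order decorated powers of $\U$ in elementary notation. First I would record the bookkeeping facts that make this almost purely mechanical: the identities $\U^{\pm1}=\U$, $\U^{\mp1}=\U^{-1}$, and $\U^{\vee1}=\U^{\pm1}\vee\U^{\mp1}=\U\vee\U^{-1}$, immediate from the recursion $U^{\pm0}=U^{\mp0}=\Delta_X$, $U^{\pm(n+1)}=UU^{\mp n}$, $U^{\mp(n+1)}=U^{-1}U^{\pm n}$; the fact that the maps $U\mapsto U^{\pm k}$, $U\mapsto U^{\mp k}$, and $U\mapsto U^{\pm k}\cap U^{\mp k}$ are monotone, so that a base of $\U$ of size $\chi(\U)$ yields bases of $\U^{\pm k}$, $\U^{\mp k}$, $\U^{\vee k}$ of the same size and hence $\overline{\psi}(\U^{\pm k})\le\chi(\U^{\pm k})\le\chi(\U)$, and likewise for $\U^{\mp k}$; and the inclusion $\U\subset\qU_X$ (valid because $\U$ generates the topology of $X$), which yields $\ell(\U^{\pm k})\le q\ell^{\pm k}(X)$, $\ell(\U^{\mp k})\le q\ell^{\mp k}(X)$, and $\ell(\U^{\vee k})\le q\ell^{\vee k}(X)$. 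These three remarks are exactly what interpolate between the middle and right-hand terms of the inequalities of Corollary~\ref{c2.3}, so once they are in place the argument is essentially a matter of matching hypotheses to conclusions.

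Concretely, for the first (unnumbered) bound I would note that $\U$, generating a Hausdorff topology, is $\pm2$-separated, apply Corollary~\ref{c2.3}(1) with $n=1$, and use $\U^{\pm1}=\U$ and $\U^{\vee1}=\U\vee\U^{-1}$ to get $iw(X)\le\overline{\psi}(\U)\cdot\ell(\U\vee\U^{-1})\le\chi(\U)\cdot\ell(\U^{\vee1})$. The eight numbered items are then obtained by reading off the four clauses of Corollary~\ref{c2.3} at $n=1,2,3$: items (1), (2), (3) are clauses (2), (3), (4) at $n=1$ (so the separatedness hypotheses $\mp3$, $\pm4$, $\mp5$ match $\mp(4n-1)$, $\pm4n$, $\mp(4n+1)$, and in the first one $\U^{\pm1}=\U$); items (4), (5), (6), (7) are clauses (1), (2), (3), (4) at $n=2$ (matching $\pm6$, $\mp7$, $\pm8$, $\mp9$); and item (8) is clause (1) at $n=3$, which directly gives $iw(X)\le\overline{\psi}(\U^{\pm9})\cdot\ell(\U^{\vee5})\le\chi(\U)\cdot q\ell^{\vee5}(X)$. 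In every item the decorated powers and boundedness numbers produced by Corollary~\ref{c2.3} coincide verbatim with those displayed, so nothing further is needed there.

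The one step that is not formal---and the one I expect to be the real obstacle---is the final inequality of item (8), namely $q\ell^{\vee5}(X)\le\dc(X)$, which sharpens $\chi(\U)\cdot q\ell^{\vee5}(X)$ to $\chi(\U)\cdot\dc(X)$. I would obtain it from Proposition~\ref{p1.8} together with the diagrams that follow it: the characteristics $q\ell^{\vee n}(X)$ are non-increasing in $n$ and, by the equalities displayed there (in particular the collapse to $q\ell^{\w}(X)=\dc(X)$), the sequence has already reached its limit $\dc(X)$ by $n=5$, so in fact $q\ell^{\vee5}(X)=\dc(X)$. Verifying this---that the hierarchy $\bigl(q\ell^{\vee n}(X)\bigr)_n$ stabilizes at $\dc(X)$ no later than index $5$, and under which separation assumptions on $X$ the relevant identities of Section~\ref{s1} are available---is where the genuine content lies; the rest is bookkeeping. (One should also note in passing that the gap between ``functionally Hausdorff'' in Corollary~\ref{c2.3} and ``Hausdorff'' here is harmless: Corollary~\ref{c2.3} is invoked only to manufacture a point-separating family of $\kappa$ many continuous real-valued functions on $X$---which is precisely what witnesses $iw(X)\le\kappa$---and its proof, via Theorem~\ref{t2.2}, Proposition~\ref{p2.1}, and Theorem~\ref{t1.4}, uses nothing beyond $\U$ being a normal quasi-uniformity generating a Hausdorff topology.)
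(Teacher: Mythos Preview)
Your proposal is correct and follows exactly the paper's approach: the paper's entire proof is the phrase ``Corollary~\ref{c2.3} implies'', and you have accurately spelled out the specializations $n=1,2,3$ together with the identifications $\U^{\pm1}=\U$ and $\U^{\vee1}=\U\vee\U^{-1}$ that make the bookkeeping go through. Your flagging of item~(8) as the single non-mechanical step is apt; the required bound $q\ell^{\vee5}(X)\le\dc(X)$ indeed comes from the chain $q\ell^{\vee5}\le\ell^{\vee5}\le\ell^{\wedge4}$ combined with the collapse $\ell^{\wedge4}=\ell^{\w}=\dc$ recorded in the diagrams after Proposition~\ref{p1.8}.
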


\section{Bi-quasi-uniformizable spaces}\label{s3}

In this section we introduce so-called bi-quasi-uniformizable spaces and obtain some upper bounds on the submetrizability number and $i$-weight of such spaces. As a motivation, consider the following characterization.

\begin{proposition}\label{p3.1} For two quasi-uniformities $\mathcal L$ and $\mathcal R$ on a set $X$ the following conditions are equivalent:
\begin{enumerate}
\item $\mathcal L\mathcal R^{-1}\subset\mathcal R^{-1}\mathcal L$;
\item $\mathcal R\mathcal L^{-1}\subset\mathcal L^{-1}\mathcal R$;
\item $\mathcal L\mathcal R^{-1}$ is a quasi-uniformity;
\item $\mathcal R\mathcal L^{-1}$ is a quasi-uniformity.
\end{enumerate}
\end{proposition}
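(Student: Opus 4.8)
The plan is to prove $(1)\Leftrightarrow(2)$ and $(1)\Leftrightarrow(3)$; since replacing the pair $(\mathcal L,\mathcal R)$ by $(\mathcal R,\mathcal L)$ turns conditions $(1)$ and $(3)$ into $(2)$ and $(4)$ respectively, the equivalence $(1)\Leftrightarrow(3)$ will automatically give $(2)\Leftrightarrow(4)$, and then all four conditions are equivalent. The equivalence $(1)\Leftrightarrow(2)$ is purely formal: one has $(\mathcal L\mathcal R^{-1})^{-1}=\mathcal R\mathcal L^{-1}$ and $(\mathcal R^{-1}\mathcal L)^{-1}=\mathcal L^{-1}\mathcal R$ because $(LR^{-1})^{-1}=RL^{-1}$ and $(R^{-1}L)^{-1}=L^{-1}R$ for all entourages $L,R$, and for pre-uniformities the inclusion $\mathcal A\subset\mathcal B$ is equivalent to $\mathcal A^{-1}\subset\mathcal B^{-1}$; so $(1)$ and $(2)$ are inverse to one another.

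For $(1)\Rightarrow(3)$: the family $\mathcal L\mathcal R^{-1}$ is always a pre-uniformity with base $\{LR^{-1}:L\in\mathcal L,\ R\in\mathcal R\}$, hence satisfies (U1) and (U2), and it remains to verify (U3). Fix a basic entourage $LR^{-1}$, choose $L_1\in\mathcal L$ with $L_1L_1\subset L$ and $R_1\in\mathcal R$ with $R_1R_1\subset R$ (so $R_1^{-1}R_1^{-1}\subset R^{-1}$), and apply hypothesis $(1)$ to the entourage $L_1R_1^{-1}\in\mathcal L\mathcal R^{-1}\subset\mathcal R^{-1}\mathcal L$ to obtain $L_2\in\mathcal L$ and $R_2\in\mathcal R$ with $R_2^{-1}L_2\subset L_1R_1^{-1}$; after intersecting we may assume moreover $L_2\subset L_1$ and $R_2\subset R_1$. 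Then $V:=L_2R_2^{-1}\in\mathcal L\mathcal R^{-1}$ satisfies
$$VV=L_2\,(R_2^{-1}L_2)\,R_2^{-1}\subset L_2\,(L_1R_1^{-1})\,R_2^{-1}\subset L_1L_1R_1^{-1}R_1^{-1}\subset LR^{-1},$$
so $\mathcal L\mathcal R^{-1}$ satisfies (U3) and is a quasi-uniformity.

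For $(3)\Rightarrow(1)$: given $L\in\mathcal L$ and $R\in\mathcal R$, use (U3) for the quasi-uniformity $\mathcal L\mathcal R^{-1}$ to find $V\in\mathcal L\mathcal R^{-1}$ with $VV\subset LR^{-1}$, and pick a basic entourage $L_1R_1^{-1}\subset V$ with $L_1\in\mathcal L$, $R_1\in\mathcal R$. Since $\Delta_X\subset L_1$ and $\Delta_X\subset R_1^{-1}$ one gets $R_1^{-1}L_1\subset L_1R_1^{-1}L_1R_1^{-1}\subset VV\subset LR^{-1}$, which shows $\mathcal L\mathcal R^{-1}\subset\mathcal R^{-1}\mathcal L$, i.e. $(1)$. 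This completes $(1)\Leftrightarrow(3)$, and with it, as explained above, the whole chain of equivalences. The only non-formal point is the verification of (U3) in $(1)\Rightarrow(3)$: one must first split $L$ and $R$ via the quasi-uniformity axioms of $\mathcal L$ and $\mathcal R$ separately, then shrink the entourages supplied by $(1)$ so that $L_2\subset L_1$ and $R_2\subset R_1$, and only then telescope; here it is essential to keep in mind that ``$\mathcal A\subset\mathcal B$'' for filters of entourages means that every basic entourage of $\mathcal A$ contains a basic entourage of $\mathcal B$, so that hypothesis $(1)$ really produces a smaller $\mathcal R^{-1}\mathcal L$-entourage inside each $\mathcal L\mathcal R^{-1}$-entourage.
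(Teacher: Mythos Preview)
Your proof is correct and follows essentially the same approach as the paper. The paper carries out the argument at the filter level in one line---writing $\mathcal L\mathcal R^{-1}=(\mathcal L\mathcal L)(\mathcal R^{-1}\mathcal R^{-1})=\mathcal L(\mathcal L\mathcal R^{-1})\mathcal R^{-1}\subset\mathcal L(\mathcal R^{-1}\mathcal L)\mathcal R^{-1}=(\mathcal L\mathcal R^{-1})(\mathcal L\mathcal R^{-1})$ for $(1)\Rightarrow(3)$ and $\mathcal L\mathcal R^{-1}=\mathcal L\mathcal R^{-1}\mathcal L\mathcal R^{-1}\subset\mathcal R^{-1}\mathcal L$ for $(3)\Rightarrow(1)$---whereas you unwind these identities into explicit entourage computations; the underlying mechanism (split via the quasi-uniformity axiom, swap the middle factor, collapse) is identical.
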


\begin{proof} $(1)\Leftrightarrow(2)$ and $(3)\Leftrightarrow (4)$: Since $\big(\mathcal L\mathcal R^{-1}\big)^{-1}= \mathcal R\mathcal L^{-1}$, the inclusion $\mathcal L\mathcal R^{-1}\subset\mathcal R^{-1}\mathcal L$ is equivalent to $\mathcal R\mathcal L^{-1}\subset\mathcal L^{-1}\mathcal R$.
By the same reason, $\mathcal L\mathcal R^{-1}$ is a quasi-uniformity if and only if $\mathcal R\mathcal L^{-1}$ is a quasi-uniformity.
\smallskip

$(1)\Ra(3)$:  If  $\mathcal L\mathcal R^{-1}\subset\mathcal R^{-1}\mathcal L$, then
$$\mathcal L\mathcal R^{-1}=(\mathcal L\mathcal L)(\mathcal R^{-1}\mathcal R^{-1})=\mathcal L(\mathcal L\mathcal R^{-1})\mathcal R^{-1}\subset \mathcal L(\mathcal R^{-1}\mathcal L)\mathcal R^{-1}=(\mathcal L\mathcal R^{-1})(\mathcal L\mathcal R^{-1}),$$which means that
the pre-uniformity $\mathcal L\mathcal R^{-1}$ is a quasi-uniformity.

$(3)\Ra(1)$: If $\mathcal L\mathcal R^{-1}$ is a quasi-uniformity, then $\mathcal L\mathcal R^{-1}=\mathcal L\mathcal R^{-1}\mathcal L\mathcal R^{-1}\subset \mathcal R^{-1}\mathcal L$.
\end{proof}

Motivated by Proposition~\ref{p3.1} let us introduce the following

\begin{definition} Two quasi-uniformities $\mathcal L$ and $\mathcal R$ on a set $X$ are called
\begin{itemize}
\item {\em commuting} if $\mathcal L\mathcal R=\mathcal R\mathcal L$;
\item {\em $\pm$-subcommuting} if $\mathcal L\mathcal R^{-1}\subset \mathcal R^{-1}\mathcal L$ and $\mathcal R\mathcal L^{-1}\subset \mathcal L^{-1}\mathcal R$;
\end{itemize}
A topological space $X$ is defined to be {\em bi-quasi-uniformizable} if the topology of $X$ is generated by two $\pm$-subcommuting quasi-uniformities.
\end{definition}

\begin{theorem}\label{t3.3} For any $\pm$-subcommuting quasi-uniformities $\mathcal L,\mathcal R$ generating the topology $\tau$ of a topological space $X$ the pre-uniformity $\FU=\mathcal L\mathcal R^{-1}\vee\mathcal R\mathcal L^{-1}$ is a uniformity generating a completely regular topology $\tau_\FU$, weaker than the topology $\tau$ of $X$. If the space $X$ is Hausdorff, then the topology $\tau_\FU$ generated by the uniformity $\FU$ is Tychonoff, the space $X$ is functionally Hausdorff and has submetrizability number
$$sm(X)\le\psi(\FU)\le\chi(\mathcal L)\cdot\chi(\mathcal R)$$ and
$i$-weight
$$iw(X)\le \psi(\FU)\cdot\log(\ell(\FU))\le \chi(\mathcal L)\cdot\chi(\mathcal R)\cdot \log(\dc(X)).$$
\end{theorem}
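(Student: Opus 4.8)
The plan is to analyze $\FU = \mathcal L\mathcal R^{-1}\vee\mathcal R\mathcal L^{-1}$ step by step, first showing it is a uniformity, then identifying the topology it generates, and finally extracting the cardinal bounds. For the first step, Proposition~\ref{p3.1} tells us that the $\pm$-subcommuting hypothesis makes both $\mathcal L\mathcal R^{-1}$ and $\mathcal R\mathcal L^{-1}$ quasi-uniformities; since $\big(\mathcal L\mathcal R^{-1}\big)^{-1}=\mathcal R\mathcal L^{-1}$, the family $\FU$ is the join of a quasi-uniformity with its inverse, hence closed under inversion, i.e.\ axiom (U4) holds. Axioms (U1), (U2) are automatic for a join of two pre-uniformities. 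For (U3) I would verify that $\FU\FU\subset\FU$ up to refinement: given $L_1,L_2\in\mathcal L$ and $R_1,R_2\in\mathcal R$, one uses the $\pm$-subcommuting inclusions to push $\big(L_1R_1^{-1}\cap R_1L_1^{-1}\big)\big(L_2R_2^{-1}\cap R_2L_2^{-1}\big)$ inside an entourage of the same form, shrinking $L_i, R_i$ using (U3) for $\mathcal L$ and $\mathcal R$ themselves. So $\FU$ is a uniformity.

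Next, for the topology: since $\mathcal L\mathcal R^{-1}\subset\mathcal R^{-1}\mathcal L$ (and symmetrically), the entourages $LR^{-1}$ contain entourages $L'\in\mathcal L$ and the entourages $RL^{-1}$ contain entourages $R'\in\mathcal R$, while every $L\in\mathcal L$ is itself of the form $L\Delta_X^{-1}$; comparing $\FU$-balls with $\mathcal L$- and $\mathcal R$-balls shows $B(x;LR^{-1}\cap RL^{-1})\supset B(x;L)\cap B(x;R)$, and since both $\mathcal L$ and $\mathcal R$ generate $\tau$, the neighbourhood filter of $x$ in $\tau_\FU$ sits inside that of $\tau$; thus $\tau_\FU\subset\tau$. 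Because $\FU$ is a uniformity, $\tau_\FU$ is completely regular. If $X$ is Hausdorff, one checks $\bigcap\FU=\Delta_X$: distinct $x,y$ are separated by some $\mathcal L$-ball or $\mathcal R$-ball (as both generate the Hausdorff $\tau$), and such a ball is witnessed inside $\FU$ via the subcommuting inclusions; hence $\tau_\FU$ is Tychonoff and $X$, carrying a weaker Tychonoff (so submetrizable-candidate) topology, is functionally Hausdorff.

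For the cardinal bounds: the submetrizability number of $X$ is at most the number of continuous pseudometrics needed to separate points of the uniform space $(X,\FU)$, which is $\psi(\FU)$ — a $\psi(\FU)$-sized subfamily $\V\subset\FU$ with $\bigcap\V=\Delta_X$ gives, via the Metrization Lemma for uniformities, countably many pseudometrics per member, hence $\psi(\FU)\cdot\w=\psi(\FU)$ of them, separating points. The estimate $\psi(\FU)\le\chi(\mathcal L)\cdot\chi(\mathcal R)$ follows since a base of $\mathcal L$ of size $\chi(\mathcal L)$ and a base of $\mathcal R$ of size $\chi(\mathcal R)$ yield a base of $\FU$ of size $\chi(\mathcal L)\cdot\chi(\mathcal R)$, whose whole intersection is $\Delta_X$. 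For the $i$-weight, apply Proposition~\ref{p1.3}: $iw(X)\le sm(X)\cdot\log(\dc(X))\le\psi(\FU)\cdot\log(\dc(X))$, and since $\tau_\FU\subset\tau$ implies $\dc(X,\tau_\FU)\le\dc(X)$ while boundedness of the uniformity controls density, one gets $\dc$ of the metrizable image bounded by $\ell(\FU)$, yielding $iw(X)\le\psi(\FU)\cdot\log(\ell(\FU))\le\chi(\mathcal L)\cdot\chi(\mathcal R)\cdot\log(\dc(X))$.

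\textbf{Main obstacle.} The delicate point is verifying axiom (U3) for $\FU$ — showing that a composition of two entourages of the form $LR^{-1}\cap RL^{-1}$ is refined by another of the same form. This is where the $\pm$-subcommuting hypothesis is genuinely used, and it requires carefully tracking how the inclusions $\mathcal L\mathcal R^{-1}\subset\mathcal R^{-1}\mathcal L$ let one reorder the factors $L$'s and $R^{-1}$'s past each other (at the cost of passing to smaller entourages via the quasi-uniformity axioms of $\mathcal L$ and $\mathcal R$). Everything else — identifying $\tau_\FU$, complete regularity, the Hausdorff $\Rightarrow$ separated passage, and the cardinal arithmetic — is comparatively routine once this closure-under-composition fact is in hand.
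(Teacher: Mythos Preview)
Your proposal is correct and follows essentially the same route as the paper. One remark: you flag axiom (U3) for $\FU$ as the ``main obstacle,'' but in fact Proposition~\ref{p3.1} already does this work for you --- it says that $\mathcal L\mathcal R^{-1}$ and $\mathcal R\mathcal L^{-1}$ are each quasi-uniformities, and the join $\U\vee\V$ of two quasi-uniformities is automatically a quasi-uniformity (since $(U'\cap V')(U'\cap V')\subset U'U'\cap V'V'\subset U\cap V$ once you pick $U'U'\subset U$ and $V'V'\subset V$); the paper dispatches this in a single sentence. Also, your separation argument in the Hausdorff case is slightly off: you want one ball from $\mathcal L$ around $x$ and one from $\mathcal R$ around $y$ (using that both generate $\tau$) so that $B(x;L)\cap B(y;R)=\emptyset$ directly gives $y\notin B(x;LR^{-1})$, rather than a single ball from one of them.
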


\begin{proof} By Proposition~\ref{p3.1}, the pre-uniformity $\FU$ is a quasi-uniformity. Since $\FU^{-1}=\FU$, it is a uniformity. Then the topology $\tau_\FU$ generated by the uniformity $\FU$ is Tychonoff (see \cite[8.1.13]{Eng}) Since $\FU\subset\mathcal L$, the topology $\tau_\FU$ is weaker than the topology $\tau_{\mathcal L}=\tau$ of the space $X$.

Now assume that the topology $\tau$ is Hausdorff. In this case for any distinct points $x,y\in X$ we can find entourages $L\in\mathcal L$ and $R\in\mathcal R$ such that $B(x;L)\cap B(y;R)=\emptyset$. Then $y\notin B(x;LR^{-1})$ and hence $(y,x)\notin \bigcap\FU$, which means that the uniformity $\FU$ is separated and the topology $\tau_\FU$ generated by $\FU$ is Tychonoff. Consequently, the space $X$ is functionally Hausdorff.

To show that $sm(X)\le\psi(\FU)$, fix a subfamily $\V\subset\FU$ of cardinality $|\V|=\psi(\FU)$ such that $\bigcap\V=\Delta_X$. By \cite[8.1.11]{Eng}, for every entourage $V\in\V$ there exists a continuous pseudometric $d_V$ on $X$ such that the entourage $[d_V]_{<1}=\{(x,y)\in X\times X:d_V(x,y)<1\}$ is contained in $V$. Then the family of pseudometrics $\mathcal D=\{d_V\}_{V\in\V}$ separates points of $X$, which implies that $sm(X)\le |\mathcal D|\le|\V|=\psi(\FU)$.

Taking into account that the topological weight of a metric space is equal to its boundedness number, which does not exceed the discrete cellularity, and applying Proposition~\ref{p1.3}, we conclude that $$iw(X)\le \psi(\FU)\cdot\log(\ell(\FU))\le \chi(\FU)\cdot \log(\dc(X))\le\chi(\mathcal L)\cdot\chi(\mathcal R)\cdot \log(\dc(X)).$$
\end{proof}

Theorem~\ref{t3.3} implies:

\begin{corollary}\label{c3.4} Each Hausdorff bi-quasi-uniformizable topological space is functionally Hausdorff.
\end{corollary}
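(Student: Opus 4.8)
The plan is to simply unwind the definition of bi-quasi-uniformizability and invoke Theorem~\ref{t3.3}. First I would take an arbitrary Hausdorff bi-quasi-uniformizable topological space $X$. By the definition stated immediately before Theorem~\ref{t3.3}, the topology of $X$ is generated by two $\pm$-subcommuting quasi-uniformities $\mathcal L$ and $\mathcal R$. Then I would apply Theorem~\ref{t3.3} to this pair $(\mathcal L,\mathcal R)$: the theorem asserts, among its other conclusions, that whenever $X$ is Hausdorff the space $X$ is functionally Hausdorff. This gives the assertion of the corollary at once.

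Since the statement is a one-line consequence of the already-established Theorem~\ref{t3.3}, there is no genuine obstacle here; the entire content of the corollary is carried by that theorem. For orientation, the mechanism inside Theorem~\ref{t3.3} that supplies functional Hausdorffness is the passage to the uniformity $\FU=\mathcal L\mathcal R^{-1}\vee\mathcal R\mathcal L^{-1}$, which is separated when $X$ is Hausdorff and whose associated topology $\tau_\FU$ is Tychonoff and weaker than the topology of $X$; continuous real-valued functions for $\tau_\FU$ remain continuous for the finer original topology and already separate the points of $X$, so $X$ is functionally Hausdorff. Thus the only thing to verify in the corollary itself is that the definition of bi-quasi-uniformizable does furnish a pair of quasi-uniformities to which Theorem~\ref{t3.3} applies, which is immediate.
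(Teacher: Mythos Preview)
Your proposal is correct and is exactly the paper's own approach: the corollary is stated immediately after Theorem~\ref{t3.3} with the phrase ``Theorem~\ref{t3.3} implies,'' and your argument simply unpacks the definition of bi-quasi-uniformizable to produce the $\pm$-subcommuting pair $\mathcal L,\mathcal R$ to which that theorem applies.
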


We do not know if this corollary can be reversed.

\begin{problem} Is each functionally Hausdorff space bi-quasi-uniformizable?
\end{problem}

\begin{proposition}\label{p3.5} Let $\mathcal L,\mathcal R$ be two $\pm$-subcommuting quasi-uniformities generating the same Hausdorff topology on $X$. If the quasi-uniformities $\mathcal L^{-1},\mathcal R^{-1}$ generate the same topology on $X$, then the quasi-uniformities $\mathcal L$ and $\mathcal R$ are 3-separated. \end{proposition}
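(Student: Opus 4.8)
Since $3$ is an odd number, a quasi-uniformity is $3$-separated if and only if it is $\pm3$-separated, and the three hypotheses are symmetric under interchanging $\mathcal L$ and $\mathcal R$; hence it suffices to prove that $\mathcal L$ is $\pm3$-separated, that is, $\bigcap_{L\in\mathcal L}LL^{-1}L=\Delta_X$. The plan is to fix distinct points $x,y\in X$ and exhibit an entourage $L\in\mathcal L$ with $(x,y)\notin LL^{-1}L$.

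First I would reformulate this in terms of balls. Using the identity $B(a;UV)=B(B(a;U);V)$ one checks that $(x,y)\notin LL^{-1}L$ is equivalent to $B(y;L^{-1}L)\cap B(x;L)=\emptyset$, where $B(y;L^{-1}L)=\bigcup_{v\in B(y;L^{-1})}B(v;L)$ is a $\tau$-neighbourhood of $y$ (it contains $B(y;L)$). So, ignoring for a moment the ``inner'' factor $L^{-1}L$, the task is just the usual Hausdorff separation of $x$ and $y$ in $\tau=\tau_{\mathcal L}=\tau_{\mathcal R}$, which is available because $\tau$ Hausdorff means $\mathcal R$ (and $\mathcal L$) is $\pm2$-separated. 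The whole difficulty is that $B(y;L^{-1}L)$ may be much larger than $B(y;L)$: its index set $B(y;L^{-1})$ is only a $\tau'$-neighbourhood of $y$, where $\tau'$ is the common topology of $\mathcal L^{-1}$ and $\mathcal R^{-1}$, and a single quasi-uniformity gives no control over $L^{-1}$, hence none over $L^{-1}L$. This is exactly the point at which the remaining two hypotheses must be brought to bear.

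The route I would take: start from a $\tau$-Hausdorff separation $B(x;R_0)\cap B(y;R_0)=\emptyset$ with $R_0\in\mathcal R$, and pick a refinement $R\in\mathcal R$ with $RR\subseteq R_0$. Using that $\mathcal L^{-1}$ and $\mathcal R^{-1}$ generate the same topology $\tau'$, choose $L\in\mathcal L$ with $B(y;L^{-1})\subseteq B(y;R^{-1})$ (and also small enough that $B(x;L)\subseteq B(x;R_0)$). Then for each $v\in B(y;R^{-1})$ we have $(v,y)\in R$, whence $B(v;L)\subseteq B(y;R^{-1}L)$, and consequently $B(y;L^{-1}L)\subseteq B(y;R^{-1}L)$. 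Now the ``mixed'' composition $R^{-1}L$ has appeared, and this is where $\pm$-subcommuting is the decisive tool: the inclusions $\mathcal L\mathcal R^{-1}\subseteq\mathcal R^{-1}\mathcal L$ and $\mathcal R\mathcal L^{-1}\subseteq\mathcal L^{-1}\mathcal R$ are precisely what let one rearrange and refine such compositions. Iterating this — each step trading an $\mathcal L^{-1}$-ball for an $\mathcal R^{-1}$-ball via the equality $\tau_{\mathcal L^{-1}}=\tau_{\mathcal R^{-1}}$, then cleaning up the newly created mixed composition via the $\pm$-subcommuting inclusions and axiom (U3) — the goal is to shrink the enveloping neighbourhood of $y$ down inside $B(y;R_0)$. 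Once $B(y;L^{-1}L)\subseteq B(y;R_0)$ we get $B(y;L^{-1}L)\cap B(x;L)\subseteq B(y;R_0)\cap B(x;R_0)=\emptyset$, i.e. $(x,y)\notin LL^{-1}L$, as desired; and the symmetric argument handles $\mathcal R$.

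The main obstacle is controlling this last cascade: replacing $\mathcal L^{-1}$-balls by $\mathcal R^{-1}$-balls and passing through factors $B(v;L)$ each spawns a new mixed composition, and the whole point is that the $\pm$-subcommuting relations, applied the right finite number of times together with (U3), must collapse all of them back inside the single separator $R_0$. It is the organisation of this finite chain of refinements — not any one estimate — that I expect to be the technical core of the argument.
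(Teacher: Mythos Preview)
Your proposal correctly identifies all three ingredients needed (Hausdorff separation in $\tau$, the equality $\tau_{\mathcal L^{-1}}=\tau_{\mathcal R^{-1}}$, and one use of $\pm$-subcommuting), and your reformulation $(x,y)\notin LL^{-1}L\Leftrightarrow B(y;L^{-1}L)\cap B(x;L)=\emptyset$ is fine. The difficulty is not just organisational, however: the ``cascade'' you sketch runs into a genuine directional obstruction. After your step~4 you arrive at the mixed composition $R^{-1}L$ and want to trap $B(y;R^{-1}L)$ inside $B(y;R_0)$. But the $\pm$-subcommuting hypotheses read $\mathcal L\mathcal R^{-1}\subset\mathcal R^{-1}\mathcal L$ and $\mathcal R\mathcal L^{-1}\subset\mathcal L^{-1}\mathcal R$; unpacked, these say that every entourage of the form $LR^{-1}$ (respectively $RL^{-1}$) \emph{contains} some $\tilde R^{-1}\tilde L$ (respectively $\tilde L^{-1}\tilde R$). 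In other words, the entourages of type $R^{-1}L$ sit on the \emph{large} side of the available inclusions: you can refine into them, but you cannot bound them from above by anything in $\mathcal R$ alone. Iterating only proliferates factors; it never closes up. So the plan ``start from a pure $\mathcal R$-separator and force $L^{-1}L$ inside it'' cannot be completed with the given hypotheses.

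The paper's proof uses the same three ingredients but organises them in the opposite order, which aligns with the direction of the subcommuting inclusion. It first manufactures a \emph{mixed} separator: from Hausdorffness choose $L\in\mathcal L$, $\hat R\in\mathcal R$ with $B(x;\hat R)\cap B(y;LL)=\emptyset$ and $B(y;\hat R)\subset B(y;L)$, so that $y\notin B(x;\hat RL^{-1}\hat R^{-1})$. Now apply $\mathcal R\mathcal L^{-1}\subset\mathcal L^{-1}\mathcal R$ once, in its natural direction, to find $\tilde L,\tilde R$ with $\tilde L^{-1}\tilde R\subset\hat RL^{-1}$; then use $\tau_{\mathcal L^{-1}}=\tau_{\mathcal R^{-1}}$ once, at the single point $x$, to pick $\check R\in\mathcal R$ with $B(x;\check R^{-1})\subset B(x;\tilde L^{-1})$. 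Setting $R=\check R\cap\tilde R\cap\hat R$ gives
\[
B(x;R^{-1}RR^{-1})\subset B(x;\check R^{-1}\tilde R\hat R^{-1})\subset B(x;\tilde L^{-1}\tilde R\hat R^{-1})\subset B(x;\hat RL^{-1}\hat R^{-1})\not\ni y,
\]
so $\mathcal R$ is $3$-separated; $\mathcal L$ follows by symmetry. The moral: instead of trying to shrink an $\mathcal R^{-1}\mathcal L$-type object (which the hypotheses do not allow), build the target separator in mixed form first, and then the single subcommuting step goes the right way.
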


\begin{proof} Given two distinct points $x,y\in X$ we shall find an entourage $R\in\mathcal R$ such that  $(x,y)\notin R^{-1}RR^{-1}$. Since the topology generated by the quasi-uniformities $\mathcal L$ and $\mathcal R$ on $X$ is Hausdorff, there are two entourages $L\in\mathcal L$ and $\hat R\in\mathcal R$ such that $B(x;\hat R)\cap B(y;LL)=\emptyset$ and hence $(x,y)\notin \hat RL^{-1}L^{-1}$. Replacing $\hat R$ by a smaller entourage, we can additionally assume that $B(y;\hat R)\subset B(y;L)$. Then $B(x;\hat R)\cap B(y;\hat RL)=\emptyset$ and hence $y\notin B(x;\hat RL^{-1}\hat R^{-1})$.
Since the quasi-uniformities $\mathcal L$ and $\mathcal R$ are $\pm$-subcommuting, for the entourages $L$ and $\hat R$ there are entourages $\tilde L\in\mathcal L$ and $\tilde R\in\mathcal R$ such that $\tilde L^{-1}\tilde R\subset \hat RL^{-1}$. Since quasi-uniformities $\mathcal L^{-1}$ and $\mathcal R^{-1}$ generate the same topology on $X$, for the entourage $\tilde L^{-1}$ there is an entourage $\check R\in\mathcal R$ such that $B(x;\check R^{-1})\subset B(x;\tilde L^{-1})$. Then for the entourage $R=\check R\cap \tilde R\cap \hat R$ we get  $B(x;R^{-1}RR^{-1})\subset B(x;\check R^{-1}\tilde R\hat R^{-1})\subset B(x;\tilde L^{-1}\tilde R\hat R^{-1})\subset B(x;\hat RL^{-1}\hat R^{-1})$ and hence $y\notin B(x;R^{-1}RR^{-1})$. So, $\bigcap\mathcal R^{-1}\mathcal R\mathcal R^{-1}=\Delta_X$ and after inversion, $\bigcap\mathcal R\mathcal R^{-1}\mathcal R=\Delta_X$, which means that the quasi-uniformity $\mathcal R$ is 3-separated. By analogy we can prove that the quasi-uniformity $\mathcal L$ is 3-separated.
\end{proof}

\section{Normally bi-quasi-uniformizable spaces}\label{s4}

Observe that for two quasi-uniformities $\mathcal L,\mathcal R$ on a set $X$ the inclusion $\mathcal L\mathcal R^{-1}\subset \mathcal R^{-1}\mathcal L$ is equivalent to the existence for every entourages $L\in\mathcal L$ and $R\in\mathcal R$ two  entourages $\tilde L\in\mathcal L$ and $\tilde R\in\mathcal R$ such that $\tilde R^{-1}\tilde L\subset LR^{-1}$. Changing the order of quantifiers in this property we obtain the following notion.

\begin{definition} A topological space $X$ is called {\em normally bi-quasi-uniformizable} if its topology is generated by quasi-uniformities $\mathcal L$ and $\mathcal R$ satisfying the following  properties:
\begin{itemize}
\item $\forall L\in\mathcal L\;\;\exists \tilde L\in\mathcal L\;\;\forall R\in\mathcal R\;\;\exists \tilde R\in\mathcal R$ such that $\tilde R^{-1}\tilde L\subset LR^{-1}$ and $\tilde L^{-1}\tilde R\subset RL^{-1}$;
\item $\forall R\in\mathcal R\;\;\exists \tilde R\in\mathcal R\;\;\forall L\in\mathcal L\;\;\exists \tilde L\in\mathcal L$ such that $\tilde L^{-1}\tilde R\subset RL^{-1}$ and $\tilde R^{-1}\tilde L\subset RL^{-1}$.
\end{itemize}
In this case we shall say that the quasi-uniformities $\mathcal L$ and $\mathcal R$ are {\em normally $\pm$-subcommuting}.
\end{definition}

By analogy we can introduce normally commuting quasi-uniformities.

\begin{definition} Two quasi-uniformities $\mathcal L$ and $\mathcal R$ on a set $X$ are defined to be
{\em normally commuting} if it satisfy the following two conditions:
\begin{itemize}
\item $\forall L\in\mathcal L\;\;\exists \tilde L\in\mathcal L\;\;\forall R\in\mathcal R\;\;\exists \tilde R\in\mathcal R$ such that $\tilde R\tilde L\subset LR$ and $\tilde L\tilde R\subset RL$;
\item $\forall R\in\mathcal R\;\;\exists \tilde R\in\mathcal R\;\;\forall L\in\mathcal L\;\;\exists \tilde L\in\mathcal L$ such that $\tilde L\tilde R\subset RL$ and $\tilde R\tilde L\subset RL$.
\end{itemize}
\end{definition}

\begin{proposition}\label{p4.3} Any two normally $\pm$-subcommuting quasi-uniformities $\mathcal L,\mathcal R$ generating the same  topology on a set $X$ are normal. Consequently, each normally bi-quasi-uniformizable topological space is normally quasi-uniformizable.
\end{proposition}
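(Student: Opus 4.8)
The plan is to prove that each of the quasi-uniformities $\mathcal L$ and $\mathcal R$ is normal; since both generate the topology of $X$, the ``consequently'' part then follows at once, because $\mathcal L$ is then a normal quasi-uniformity generating the topology of $X$. The definition of normally $\pm$-subcommuting quasi-uniformities is symmetric under the exchange $\mathcal L\leftrightarrow\mathcal R$ (the second bullet is obtained from the first by swapping the roles of $\mathcal L$ and $\mathcal R$), so it suffices to prove that $\mathcal L$ is normal, i.e., that $\overline A\subset\overline{B(A;L)}^\circ$ for every $A\subset X$ and every $L\in\mathcal L$; normality of $\mathcal R$ will then follow by interchanging $\mathcal L$ and $\mathcal R$ throughout and using the second condition of the definition.

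Fix $A\subset X$ and $L\in\mathcal L$. Using the first condition of normal $\pm$-subcommutativity, choose $\tilde L\in\mathcal L$ (depending only on $L$) so that for every $R\in\mathcal R$ there is $\tilde R\in\mathcal R$ with $\tilde L^{-1}\tilde R\subset RL^{-1}$. The key claim is that $B(x;\tilde L)\subset\overline{B(A;L)}$ for every $x\in\overline A$. To see this, take $w\in B(x;\tilde L)$ and an arbitrary neighborhood $V$ of $w$. Since $\mathcal R$ generates the topology of $X$, there is $R\in\mathcal R$ with $B(w;R)\subset V$, and then the choice of $\tilde L$ gives $\tilde R\in\mathcal R$ with $\tilde L^{-1}\tilde R\subset RL^{-1}$. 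As $x\in\overline A$ and $B(x;\tilde R)$ is a neighborhood of $x$, pick $a\in B(x;\tilde R)\cap A$. From $(x,w)\in\tilde L$ and $(x,a)\in\tilde R$ we get $(w,a)\in\tilde L^{-1}\tilde R\subset RL^{-1}$, so there is $b\in X$ with $(w,b)\in R$ and $(a,b)\in L$; the former gives $b\in B(w;R)\subset V$ and the latter gives $b\in B(a;L)\subset B(A;L)$ since $a\in A$. Hence $V\cap B(A;L)\ne\emptyset$, and as $V$ was arbitrary, $w\in\overline{B(A;L)}$, proving the claim.

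Now, since $\tilde L\in\mathcal L$, the ball $B(x;\tilde L)$ is a neighborhood of $x$ in $\tau_{\mathcal L}$, hence contains an open neighborhood $O_x\ni x$; by the claim $O_x\subset\overline{B(A;L)}$, so $x\in\overline{B(A;L)}^\circ$. As $x\in\overline A$ was arbitrary, $\overline A\subset\overline{B(A;L)}^\circ$, i.e.\ $\mathcal L$ is normal. By the symmetry noted above, $\mathcal R$ is normal as well, and the proposition follows.

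The only delicate point I anticipate is bookkeeping: one must keep careful track of the composition of entourages and the directions of the inclusions, and, crucially, one must use that the quantifier order in the definition of \emph{normally} $\pm$-subcommuting allows $\tilde L$ to be chosen before $R$ is revealed (this is precisely what distinguishes normally $\pm$-subcommuting from merely $\pm$-subcommuting, and it is what makes the single entourage $\tilde L$ work uniformly for all neighborhoods $V$ of $w$). Everything else reduces to routine manipulations with balls and neighborhoods.
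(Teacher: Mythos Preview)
Your proof is correct and follows essentially the same approach as the paper: both fix $A$ and $L$, invoke the first bullet of normal $\pm$-subcommutativity to get a single $\tilde L$ that works for all $R$, and then show $B(\overline A;\tilde L)\subset\overline{B(A;L)}$ (your pointwise version for each $x\in\overline A$ is equivalent) by picking a neighborhood, passing to an $R$-ball, producing $\tilde R$, and chasing the inclusion $\tilde L^{-1}\tilde R\subset RL^{-1}$ to find a point of $B(A;L)$ in that neighborhood. The closing observation that the uniform choice of $\tilde L$ is exactly what the \emph{normally} in ``normally $\pm$-subcommuting'' buys is spot on.
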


\begin{proof} To show that $\mathcal L$ is normal, fix a subset $A\subset X$ and entourage $L\in\mathcal L$. Since $\mathcal L$ and $\mathcal R$ are normally $\pm$-subcommuting, for the entourage $L$ there exists an entourage $\tilde L\in\mathcal L$ such that for every entourage $R\in\mathcal R$ there is an entourage $\tilde R\in\mathcal R$ with $\tilde L^{-1}\tilde R\subset RL^{-1}$. We claim that $B(\bar{A};\tilde L)\subset\overline{B(A;L)}$. Given any point $x\in B(\bar{A};\tilde L)$, we need to show that $x\in \overline{B(A;L)}$. Given any neighborhood $O_x\subset X$ of $x$, find an entourage $R\in\mathcal R$ such that $B(x;R)\subset O_x$. By the choice of the entourage $\tilde L$, for the entourage $R$ there is an entourage $\tilde R\in\mathcal R$ such that $\tilde L^{-1}\tilde R\subset RL^{-1}$. It follows from $x\in B(\bar{A};\tilde L)$ that $B(x;\tilde L^{-1})\cap \bar{A}\ne\emptyset$ and hence $\emptyset \ne B(x;\tilde L^{-1}\tilde R)\cap A\subset B(x;RL^{-1})\cap A$. Then $\emptyset\ne B(x;R)\cap B(A;L)\subset O_x\cap B(A;L)$, which means $x\in \overline{B(A;L)}$. So, $B(\bar{A};\tilde L)\subset \overline{B(A;L)}$ and hence $\bar{A}\subset B(\bar{A};\tilde L)^\circ\subset\overline{B(A;L)}^\circ$, which means that $\mathcal L$ is normal. By analogy we can prove the normality of the quasi-uniformity $\mathcal R$.
\end{proof}

\begin{theorem}\label{t4.4} If $\mathcal L$ and $\mathcal R$ are two normally $\pm$-subcommuting quasi-uniformities generating the topology of a Hausdorff topological space $X$, then the quasi-uniformities $\mathcal L\mathcal R^{-1}$ and $\mathcal R\mathcal L^{-1}$ are 1-separated and have pseudocharacter
\begin{enumerate}
\item $\psi(\mathcal L\mathcal R^{-1})=\psi(\mathcal R\mathcal L^{-1})\le \psi(\mathcal L\mathcal L^{-1})\cdot\ell(\mathcal L^{-1})\le \psi(\mathcal L\mathcal L^{-1})\cdot q\ell^{\mp1}(X)$;
\item $\psi(\mathcal L\mathcal R^{-1})=\psi(\mathcal R\mathcal L^{-1})\le\psi(\mathcal L^{-1}\mathcal L)\cdot \ell(\mathcal L)\le\psi(\mathcal L^{-1}\mathcal L)\cdot q\ell^{\pm1}(X)$ if $\mathcal L^{-1},\mathcal R^{-1}$ are normally $\pm$-sub\-commuting and generate the same topology on $X$;
\item   $\psi(\mathcal L\mathcal R^{-1})=\psi(\mathcal R\mathcal L^{-1})\le\psi(\mathcal L\mathcal L^{-1}\mathcal L)\cdot\ell(\mathcal L\mathcal L^{-1}\vee\mathcal L^{-1}\mathcal L)\le\psi(\mathcal L\mathcal L^{-1}\mathcal L)\cdot q\ell^{\vee 2}(X)$ if the quasi-uniformities $\mathcal L$ and $\mathcal R$ are normally commuting and $\bigcap\mathcal L\mathcal L^{-1}\mathcal L=\Delta_X$;
\item $\psi(\mathcal L\mathcal R^{-1})=\psi(\mathcal R\mathcal L^{-1})\le \overline{\psi}(\A^{-1}\A\mathcal L)\cdot\ell(\A)\cdot \ell^{\pm2}(X)$ for any pre-uniformity $\A$ on $X$ such that\newline  $\bigcap\overline{\A^{-1}\kern-2pt\A\mathcal L}=\Delta_X$.
\end{enumerate}
\end{theorem}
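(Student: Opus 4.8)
The plan is to dispose of the structural assertions first, then to obtain (4) as a quasi-uniform analogue of the proof of Theorem~\ref{t2.2}, and finally to read off (1)--(3) by specialising and sharpening this scheme.

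\emph{Structural assertions and the main tool.} Normally $\pm$-subcommuting quasi-uniformities are $\pm$-subcommuting, so by Proposition~\ref{p3.1} both $\mathcal L\mathcal R^{-1}$ and $\mathcal R\mathcal L^{-1}=(\mathcal L\mathcal R^{-1})^{-1}$ are quasi-uniformities. Inversion of entourages is an involution carrying a subfamily with intersection $\Delta_X$ to one with intersection $\Delta_X$, so $\psi(\mathcal L\mathcal R^{-1})=\psi(\mathcal R\mathcal L^{-1})$ and only $\psi(\mathcal L\mathcal R^{-1})$ needs to be bounded. As $1$ is odd, $1$-separatedness of $\mathcal L\mathcal R^{-1}$ means $\bigcap\mathcal L\mathcal R^{-1}=\Delta_X$: for distinct $x,y$, Hausdorffness and the fact that $\mathcal L,\mathcal R$ generate the topology give $L\in\mathcal L$, $R\in\mathcal R$ with $B(x;L)\cap B(y;R)=\emptyset$, hence $(x,y)\notin LR^{-1}\in\mathcal L\mathcal R^{-1}$. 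I also record the \emph{shrinking property} contained in the proof of Proposition~\ref{p4.3}: since $\mathcal L$ is normal, for every $L\in\mathcal L$ there is $\tilde L\in\mathcal L$ with $\tilde L\subset L$ and $B(\overline A;\tilde L)\subset\overline{B(A;L)}$ for all $A\subset X$ (and symmetrically for $\mathcal R$). Combined with the fact that $\{B(y;R):R\in\mathcal R\}$ is a neighbourhood base at each $y$, this turns a statement ``$y\notin\overline C$'' (with $C$ an $\mathcal L$-neighbourhood of the first coordinate) into a genuine $R\in\mathcal R$ with $B(y;R)\cap C=\emptyset$, and lets one replace a closure of an $\mathcal L$-ball by a slightly larger $\mathcal L$-ball providing a ``buffer''.

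\emph{Proof of (4).} Put $\kappa=\overline{\psi}(\A^{-1}\A\mathcal L)$. Using $\bigcap\overline{\A^{-1}\A\mathcal L}=\Delta_X$, pick $(A_\alpha)_{\alpha<\kappa}\subset\A$ and $(L_\alpha)_{\alpha<\kappa}\subset\mathcal L$ with $\bigcap_{\alpha<\kappa}\overline{B(x;A_\alpha^{-1}A_\alpha L_\alpha)}=\{x\}$ for all $x$, and for each $\alpha$ pick $Z_\alpha\subset X$ with $|Z_\alpha|\le\ell(\A)$ and $B(Z_\alpha;A_\alpha)=X$. Verbatim as in the proof of Theorem~\ref{t2.2}, for distinct $x,y$ there are $\alpha<\kappa$ and $z\in Z_\alpha$ with $x\in B(z;A_\alpha)$ and $y\notin\overline{B(z;A_\alpha L_\alpha)}$; note that then $B(x;L_\alpha)\subset B(z;A_\alpha L_\alpha)$. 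The new ingredient is a covering step: for each pair $(\alpha,z)$ with $z\in Z_\alpha$ one produces a subfamily $\{R_{\alpha,z,j}\}_{j\in J_{\alpha,z}}\subset\mathcal R$ with $|J_{\alpha,z}|\le\ell^{\pm2}(X)$ such that every $y\notin\overline{B(z;A_\alpha L_\alpha)}$ satisfies $B(y;R_{\alpha,z,j})\cap B(z;A_\alpha L_\alpha)=\emptyset$ for some $j$; this is done by applying the definition of $\ell^{\pm2}(X)=\ell(\pU_X^{\pm2})$ to a neighbourhood assignment built of $\mathcal R$-balls which on the open set $X\setminus\overline{B(z;A_\alpha L_\alpha)}$ miss $\overline{B(z;A_\alpha L_\alpha)}$, using the shrinking buffer. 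Then $\W=\{\,L_\alpha R_{\alpha,z,j}^{-1}:\alpha<\kappa,\ z\in Z_\alpha,\ j\in J_{\alpha,z}\,\}\subset\mathcal L\mathcal R^{-1}$ has cardinality $\le\kappa\cdot\ell(\A)\cdot\ell^{\pm2}(X)$, and for distinct $x,y$ and the chosen $\alpha,z,j$ we get $B(x;L_\alpha)\cap B(y;R_{\alpha,z,j})\subset B(z;A_\alpha L_\alpha)\cap B(y;R_{\alpha,z,j})=\emptyset$, i.e. $(x,y)\notin L_\alpha R_{\alpha,z,j}^{-1}$; hence $\bigcap\W=\Delta_X$ and $\psi(\mathcal L\mathcal R^{-1})\le\kappa\cdot\ell(\A)\cdot\ell^{\pm2}(X)$.

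\emph{Proof of (1)--(3).} Each is obtained by running the scheme of (4) with a well-chosen $\A$, the stronger hypotheses being exactly what is needed to replace $\overline{\psi}$ by $\psi$ and to absorb the covering. For (1) take $\A=\mathcal L^{-1}$, so $\A^{-1}\A=\mathcal L\mathcal L^{-1}$ and $\bigcap\mathcal L\mathcal L^{-1}=\Delta_X$ because the Hausdorff $\mathcal L$ is $\pm2$-separated; now the $\A$-balls are genuine $\mathcal L^{-1}$-balls, so no closure is lost and the covering is carried by $\ell(\A)=\ell(\mathcal L^{-1})$. For (2), the hypotheses on $\mathcal L^{-1},\mathcal R^{-1}$ let us invoke Proposition~\ref{p3.5} to get that $\mathcal L$ is $3$-separated, hence $\mp2$-separated, so $\bigcap\mathcal L^{-1}\mathcal L=\Delta_X$ and one may take $\A=\mathcal L$, obtaining $\psi(\mathcal L^{-1}\mathcal L)\cdot\ell(\mathcal L)$. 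For (3), take $\A=\mathcal L^{-1}$ under $\bigcap\mathcal L\mathcal L^{-1}\mathcal L=\Delta_X$ and use the stronger \emph{commuting} relations $\tilde R\tilde L\subset LR$, $\tilde L\tilde R\subset RL$, which let the conversion be performed with the covering bounded by $\ell(\mathcal L\mathcal L^{-1}\vee\mathcal L^{-1}\mathcal L)$ rather than $\ell^{\pm2}(X)$ and keep $\psi(\mathcal L\mathcal L^{-1}\mathcal L)$ in place of $\overline{\psi}$. In each of (1)--(3) the last displayed inequality is the monotonicity $\ell(\mathcal L^{-1})\le q\ell^{\mp1}(X)$, $\ell(\mathcal L)\le q\ell^{\pm1}(X)$, $\ell(\mathcal L\mathcal L^{-1}\vee\mathcal L^{-1}\mathcal L)\le q\ell^{\vee2}(X)$ respectively, all valid since $\mathcal L\subset\qU_X$.

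\emph{Main obstacle.} The technical heart, and the step I expect to be hardest, is the covering/conversion used in (4): turning the single ``closed-set'' separation $y\notin\overline{B(z;A_\alpha L_\alpha)}$ into a \emph{uniformly bounded} family of honest entourages of $\mathcal L\mathcal R^{-1}$. The subtlety is that the $\A$-balls are not neighbourhoods, so this has to be arranged using only $\mathcal R$-balls and the normality of $\mathcal L$; pinning down that $\ell^{\pm2}(X)$ suffices here, and seeing why the stronger hypotheses in (1)--(3) allow the sharper factors and the elimination of the closures, is where the real work lies.
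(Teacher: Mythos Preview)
Your treatment of the structural claims and of item (4) is correct and matches the paper's argument closely: you localize via $\ell(\A)$ to a point $z$ with $x\in B(z;A_\alpha)$ and $y\notin\overline{B(z;A_\alpha L_\alpha)}$, then build a neighbourhood assignment from $\mathcal R$-balls and extract $\le\ell^{\pm2}(X)$ many $R$-entourages per pair $(\alpha,z)$ via the normality buffer and the normal $\pm$-subcommutation. That is exactly what the paper does.

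The gap is in your derivation of (1)--(3). You write that each ``is obtained by running the scheme of (4) with a well-chosen $\A$, the stronger hypotheses being exactly what is needed to replace $\overline{\psi}$ by $\psi$ and to absorb the covering.'' This does not go through. With $\A=\mathcal L^{-1}$ one has $\A^{-1}\A\mathcal L=\mathcal L\mathcal L^{-1}\mathcal L$, so even a closure-free version of (4) would only yield $\psi(\mathcal L\mathcal L^{-1}\mathcal L)\cdot\ell(\mathcal L^{-1})\cdot\ell^{\pm2}(X)$, not $\psi(\mathcal L\mathcal L^{-1})\cdot\ell(\mathcal L^{-1})$. Neither the extra $\mathcal L$-factor nor the $\ell^{\pm2}(X)$ factor is ``absorbed'' by anything you have said; the latter arose precisely because in (4) you had no control over which $R$-entourage separates a given $y$, and nothing about $\A=\mathcal L^{-1}$ changes that. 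The paper proves (1) by a genuinely different and more direct construction: fix $\Lambda\subset\mathcal L$ witnessing $\psi(\mathcal L\mathcal L^{-1})$; for each $L\in\Lambda$ use normal $\pm$-subcommutation to pick $\tilde L$ once, then cover $X=B(Z_L;\tilde L^{-1})$ with $|Z_L|\le\ell(\mathcal L^{-1})$; at \emph{each covering point} $z\in Z_L$ use that $\mathcal R$ generates the topology to choose a single $R_z\in\mathcal R$ with $B(z;R_z)\subset B(z;L)$, and then the subcommutation produces $\tilde R_z$ with $\tilde L^{-1}\tilde R_z\subset R_zL^{-1}$. The separating family is $\{(L,\tilde R_z):L\in\Lambda,\ z\in Z_L\}$, of size $\le\psi(\mathcal L\mathcal L^{-1})\cdot\ell(\mathcal L^{-1})$; no closures and no $\ell^{\pm2}$ ever enter. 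Item (3) is proved by an analogous direct construction (this is where normal commuting is used in addition), and item (2) is obtained not by choosing $\A=\mathcal L$ in (4) but simply by applying (1) to the pair $(\mathcal L^{-1},\mathcal R^{-1})$ --- whose generated topology is Hausdorff by Proposition~\ref{p3.5} --- and then using $\mathcal L^{-1}\mathcal R\subset\mathcal R\mathcal L^{-1}$ to pass from $\psi(\mathcal L^{-1}\mathcal R)$ to $\psi(\mathcal R\mathcal L^{-1})$.
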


\begin{proof}  First we show that the quasi-uniformities $\mathcal L\mathcal R^{-1}$ and $\mathcal R\mathcal L^{-1}$ are 1-separated. Since the topology of $X$ is Hausdorff, for any distinct points $x,y\in X$ we can find two disjoint open sets $O_x\ni x$ and $O_y\ni y$. Taking into account that the quasi-uniformities $\mathcal L$ and $\mathcal R$ generate the topology of $X$, we can find two entourages $L\in\mathcal L$ and $R\in\mathcal R$ such that $B(x;L)\subset O_x$ and $B(y;R)\subset O_y$. Then $B(x;L)\cap B(y;R)=\emptyset$ and hence $y\notin B(x;LR^{-1})$ and $x\notin B(y;RL^{-1})$, which implies that $\bigcap\mathcal L\mathcal R^{-1}=\Delta_X=\bigcap\mathcal R\mathcal L^{-1}$. So, the quasi-uniformities $\mathcal L\mathcal R^{-1}$ and $\mathcal R\mathcal L^{-1}$ are 1-separated. Taking into account that $\big(\mathcal L\mathcal R^{-1}\big)^{-1}=\mathcal R\mathcal L^{-1}$ we conclude that $\psi(\mathcal L\mathcal R^{-1})=\psi(\mathcal R\mathcal L^{-1})$.
\smallskip

1. Now we shall prove the inequality $\psi(\mathcal L\mathcal R^{-1})\le \psi(\mathcal L\mathcal L^{-1})\cdot\ell(\mathcal L^{-1})$. Fix a family of entourages $\Lambda\subset\mathcal L$ of cardinality $|\Lambda|\le\psi(\mathcal L\mathcal L^{-1})$ such that $\bigcap_{L\in\Lambda}LL^{-1}=\Delta_X$. Replacing every $L\in\Lambda$ by a smaller entourage, we can assume that $\bigcap_{L\in\Lambda}(LL)(LL)^{-1}=\Delta_X$.

Since the quasi-uniformities $\mathcal L$ and $\mathcal R$ are normally $\pm$-subcommuting, for the entourage $L\in\mathcal L$ there exists an entourage $\tilde L\in\mathcal L$ such that for any entourage $R\in\mathcal R$ there exists an entourage $\tilde R\in\mathcal R$ such that $\tilde L^{-1}\tilde R\subset RL^{-1}$. Replacing $\tilde L$ by $\tilde L\cap L$, we can assume that $\tilde L\subset L$.
For the entourage $\tilde L$ choose a subset $Z_L\subset X$ of cardinality $|Z_L|\le \ell(\mathcal L^{-1})$ such that  $X=B(Z_L;\tilde L^{-1})$. For every $z\in Z_L$ choose an entourage $R_z\in\mathcal R$ such that $B(z;R_z)\subset B(z; L)$. By the choice of $\tilde L$, for the entourage $R_z$ there exists an entourage $\tilde R_z\in\mathcal R$ such that $\tilde L^{-1}\tilde R_z\subset R_zL^{-1}$.
Consider the family
$$\mathcal P=\bigcup_{L\in\Lambda}\{(L,\tilde R_z):z\in Z_L\}\subset\mathcal L\times\mathcal R.$$ We claim that for any distinct points $x,y\in X$ there is a pair $(L,\tilde R_z)\in\mathcal P$ such that $B(x;L)\cap B(y;\tilde R_z)=\emptyset$. By the choice of the family $\Lambda$, there is an entourage $L\in\Lambda$ such that $x\notin B(y;LLL^{-1}L^{-1})$.  Since $y\in X=B(Z_L;\tilde L^{-1})$, there exists a point $z\in Z_L$ such that $y\in B(z;\tilde L^{-1})$ and hence $z\in B(y;\tilde L)$. We claim that the pair $(\tilde L,\tilde R_z)\in\mathcal P$ has the desired property: $B(x; L)\cap B(y;\tilde R_z)=\emptyset$. Assuming that $B(x;L)\cap B(y;\tilde R_z)\ne\emptyset$, we would conclude that $$x\in B(y;\tilde R_zL^{-1}){\subset} B(z;\tilde L^{-1}\tilde R_z L^{-1}){\subset} B(z;R_zL^{-1}L^{-1}){\subset} B(z;LL^{-1}L^{-1}){\subset} B(y;\tilde LLL^{-1}L^{-1}){\subset} B(y,LLL^{-1}L^{-1})$$
which contradicts the choice of $L$. So $B(x;L)\cap B(y;\tilde R_z)=\emptyset$, which is equivalent to
$y\notin B(x;L\tilde R_z^{-1})$. Then $$\psi(\mathcal L\mathcal R^{-1})\le |\mathcal P|\le\sum_{L\in\Lambda}|Z_L|\le |\Lambda|\cdot \ell(\mathcal L^{-1})\le \psi(\mathcal L\mathcal L^{-1})\cdot \ell(\mathcal L^{-1}).$$
\smallskip

2. If the quasi-uniformities $\mathcal L^{-1}$ and $\mathcal R^{-1}$ are normally $\pm$-subcommuting and generate the same topology on $X$, then by Proposition~\ref{p3.5}, this topology is Hausdorff, which allows us to apply the first item to the quasi-uniformities $\mathcal L^{-1},\mathcal R^{-1}$ and obtain the upper bound
$\psi(\mathcal L^{-1}\mathcal R)\le\psi(\mathcal L^{-1}\mathcal L)\cdot \ell(\mathcal L)$.
The $\pm$-subcommutativity of $\mathcal L^{-1}$ and $\mathcal R^{-1}$ implies that $\psi(\mathcal R\mathcal L^{-1})\le\psi(\mathcal L^{-1}\mathcal R)$. So,
$$\psi(\mathcal L\mathcal R^{-1})=\psi(\mathcal R\mathcal L^{-1})\le\psi(\mathcal L^{-1}\mathcal R)\le\psi(\mathcal L^{-1}\mathcal L)\cdot \ell(\mathcal L)\le\psi(\mathcal L^{-1}\mathcal L)\cdot q\ell^{\pm1}(X).$$
\smallskip

3. Next, assuming that the quasi-uniformities $\mathcal L$ and $\mathcal R$ are normally commuting and $\bigcap\mathcal L\mathcal L^{-1}\mathcal L=\Delta_X$, we prove the inequality $\psi(\mathcal R\mathcal L^{-1})=\psi(\mathcal L\mathcal R^{-1})\le
\psi(\mathcal L\mathcal L^{-1}\mathcal L)\cdot\ell(\mathcal L\mathcal L^{-1}\vee\mathcal L^{-1}\mathcal L)$. Fix a subfamily $\Lambda\subset\mathcal L$ of cardinality $|\Lambda|=\psi(\mathcal L\mathcal L^{-1}\mathcal L)$ such that $\bigcap_{L\in\Lambda}LL^{-1}L=\Delta_X$. Replacing every entourage $L\in\Lambda$ by a smaller entourage, we can assume that $\bigcap_{L\in\Lambda}L^{2}L^{-3}L=\Delta_X$.

Since the quasi-uniformities $\mathcal L$ and $\mathcal R$ are normally commuting and normally $\pm$-subcommuting, for every entourage $L\in\Lambda$ there exists an entourage $\tilde L\in\mathcal L$, $\tilde L\subset L$, such that for every entourage $R\in\mathcal R$ there exists an entourage $\tilde R\in\mathcal R$ such that $\tilde L\tilde R\subset RL$ and $\tilde L^{-1}\tilde R\subset RL^{-1}$.

By the definition of the boundedness number $\ell(\mathcal L\mathcal L^{-1}\kern-2pt\vee\kern-2pt\mathcal L^{-1}\kern-2pt\mathcal L)$, for every $L\in\Lambda$ there exists a subset $A_L\subset X$ of cardinality $|A_L|\le \ell(\mathcal L\mathcal L^{-1}\kern-2pt\vee\kern-2pt\mathcal L^{-1}\kern-2pt\mathcal L)$ such that $X=B(A_L;\tilde L\tilde L^{-1}\cap \tilde L^{-1}\tilde L)$.

For every point $a\in A_L$ choose an entourage $R_a\in\mathcal R$ such that $B(a;R_a)\subset B(a;L)$. By the choice of $\tilde L$ for the entourage $R_a$ there exists an entourage $\check R_a\in\mathcal L$ such that $\tilde L\check R_a\subset R_aL$, and for the entourage $\check R_a\in\mathcal R$ there is an entourage $\tilde R_a\in\mathcal R$ such that $\tilde L^{-1}\tilde R_a\subset \check R_aL^{-1}$.
Consider the family of pairs
$$\mathcal P=\bigcup_{L\in\Lambda}\{(L,\tilde R_a):a\in A_L\}\subset\mathcal L\times\mathcal R.$$
We claim that for any distinct points $x,y\in X$ there exists a pair $(L,R)\in\mathcal P$ such that $B(x;L)\cap B(y;R)=\emptyset$. Given two distinct points $x,y\in X$, find an entourage $L\in\Lambda$ such that $(x,y)\notin L^2L^{-3}L$.

 Since $y\in X=B(A_L;\tilde L\tilde L^{-1}\cap \tilde L^{-1}\tilde L)$, we can find a point $a\in A_L$ such that $y\in B(a;\tilde L\tilde L^{-1}\cap \tilde L^{-1}\tilde L)$ and hence $y\in B(a;\tilde L\tilde L^{-1})$ and $a\in B(y;\tilde L^{-1}\tilde L)\subset B(y;L^{-1}L)$. We claim that $B(x;L)\cap B(y;\tilde R_a)=\emptyset$. To derive a contradiction, assume that
$B(x;L)\cap B(y;\tilde R_a)\ne\emptyset$. Observe that $$B(y;\tilde R_a)\subset B(a;\tilde L\tilde L^{-1}\tilde R_a)\subset B(a;\tilde L\check R_aL^{-1})\subset B(a;R_aLL^{-1})\subset B(a;LLL^{-1})\subset B(y;L^{-1}LLLL^{-1}).$$ Then
$\emptyset\ne B(x;L)\cap B(y;\tilde R_a)\subset B(x;L)\cap B(y;L^{-1}LLLL^{-1})$ implies $y\notin B(x;L^2L^{-3}L)$, which contradicts the choice of the entourage $L$. This contradiction shows that
$B(x;L)\cap B(y;\tilde R_a)=\emptyset$ and hence $$\psi(\mathcal R\mathcal L^{-1})=\psi(\mathcal L\mathcal R^{-1})\le|\mathcal P|\le\sum_{L\in\Lambda}|A_V|\le \psi(\mathcal L\mathcal L^{-1}\mathcal L)\cdot \ell(\mathcal L\mathcal L^{-1}\kern-2pt\vee\kern-2pt\mathcal L^{-1}\kern-2pt\mathcal L).$$
\smallskip

4. Finally we prove that  $\psi(\mathcal L\mathcal R^{-1})=\psi(\mathcal R\mathcal L^{-1})\le \overline{\psi}(\A^{-1}\A\mathcal L)\cdot\ell(\A)\cdot \ell^{\pm2}(X)$ for any pre-uniformity $\A$ on $X$ such that $\bigcap\overline{\A^{-1}\A\U}=\Delta_X$. If $\overline{\psi}(\A^{-1}\A\mathcal L)$ is finite, then $\overline{\psi}(\A^{-1}\A\mathcal L)=1$, which implies that $A^{-1}AL=\Delta_X=A=L$ for some $A\in\A$ and $L\in\mathcal L$. In this case  $\ell(\A)=|X|$  and the topological space $X$ is discrete. Then for every point $x\in X$ we can choose an entourage $R_x\in\mathcal R$ such that $B(x;R_x)=\{x\}$. Then $\bigcap_{x\in X}R_xL^{-1}=\bigcap_{x\in X}R_x=\Delta_X$ and hence $\psi(\mathcal R\mathcal L^{-1})\le |X|=\ell(\A)\le \overline{\psi}(\A^{-1}\A\mathcal L)\cdot\ell(\A)\cdot \ell^{\pm2}(X)$.

So, we assume that the cardinal $\kappa=\overline{\psi}(\A^{-1}\A\U)$ is infinite. Since $\bigcap\overline{\A^{-1}\A\mathcal L}=\Delta_X$, we can choose subfamilies $(A_\alpha)_{\alpha\in\kappa}\subset\A$ and $(L_\alpha)_{\alpha\in\kappa}\subset\mathcal L$ such that $\bigcap_{\alpha<\kappa}\overline{B(x,A_\alpha^{-1}A_\alpha L^3_\alpha)}=\{x\}$ for every $x\in X$.

For every $\alpha<\kappa$ consider the entourage $A_\alpha\in\A$ and find a subset $Z_\alpha\subset X$ of cardinality $|Z_\alpha|\le\ell(\A)$ such that $X=B(Z_\alpha;A_\alpha)$.
Since the quasi-uniformities $\mathcal L$ and $\mathcal R$ are normally $\pm$-subcommuting, for the entourage $L_\alpha$ there is an entourage $\tilde L_\alpha$ such that for every $R\in\mathcal R$ there is $\tilde R_\alpha\in\mathcal R$ such that $\tilde L_\alpha^{-1}\tilde R\subset RL_\alpha^{-1}$.

Now fix any point $z\in Z_\alpha$.
The normality of the quasi-uniformity $\mathcal L$ (proved in Proposition~\ref{p4.3}) guarantees that $\overline{B(z;A_\alpha L_\alpha^2)}\subset  \overline{B(z;A_\alpha L_\alpha^3)}^\circ$. Put $W_{\alpha,z}=\overline{B(z;A_\alpha L_\alpha^3)}^\circ$.
For every point $y\in X\setminus W_{\alpha,z}$ choose an entourage $R_y\in\mathcal R$ such that $B(y;R_yR_y)\cap \overline{B(z;A_\alpha L_\alpha^2)}=\emptyset$ and hence $B(y;R^2_yL_\alpha^{-1})\cap B(z;A_\alpha L_\alpha)=\emptyset$. For every $y\in X\setminus \overline{B(z;A_\alpha L_\alpha^3)}$ we can replace $R_y$ by a smaller entourage and assume additionally that $B(y;R_y)$ is disjoint with $\overline{B(z;A_\alpha L_\alpha^3)}$.

By the choice of the entourage $\tilde L_\alpha$  for every $y\in X\setminus W_{\alpha,z}$ there is an entourage $\tilde R_y\in\mathcal R$ such that $\tilde R_y\subset R_y$ and $\tilde L_\alpha^{-1}\tilde R_y\subset R_yL^{-1}_\alpha$. For every $y\in W_{\alpha,z}$ choose an entourage $\tilde R_y\in\mathcal R$ such that $B(y;\tilde R_y)\subset W_{\alpha,z}$. Now consider the neighborhood assignment $V=\bigcup_{y\in X}\{y\}\times B(y;\tilde R_y\cap \tilde L_\alpha)$. By the definition of $\ell^{\pm2}(X)$, there exists a subset $A_{\alpha,z}\subset X$ of cardinality $|A_{\alpha,z}|\le \ell^{\pm2}(X)$ such that $X=B(A_{\alpha,z};VV^{-1})$.

%Then for every $y\notin\overline{W}$ the ball $B(y;\tilde L_\alpha)$ meets some ball $B(a;\tilde R_a)$ with $a\in A_{\alpha,z}$. Then $B(y;\tilde R_a)\subset B(a;\tilde R_a\tilde L_\alpha^{-1}\tilde R_a)\subset B(a;\tilde R_aR_aL_\alpha^{-1})\subset B(a;R_a^2L_\alpha^{-1})$. Taking into account that the set $B(a;R_a^2L_\alpha^{-1})$ is disjoint with the set $B(z;A_\alpha L_\alpha)$, we conclude that $B(y;\tilde R_a)\cap B(z;A_\alpha L_\alpha)=\emptyset$.

Consider the family $\mathcal P=\bigcup_{\alpha\in \kappa}\bigcup_{z\in Z_\alpha}\{(L_\alpha,\tilde R_a):a\in A_{\alpha,z}\}\subset\mathcal L\times\mathcal R$. We claim that for any distinct points $x,y\in X$ there is a pair $(L,R)\in\mathcal P$ such that $B(x;L)\cap B(y;R)=\emptyset$.

Indeed, for the points $x,y\in X$ we can find an ordinal $\alpha\in\kappa$ such that $y\notin \overline{B(x;A_\alpha^{-1}\A_\alpha L_\alpha^3)}$.
Since $X=B(Z_\alpha;A_\alpha)$, there is a point $z\in Z_\alpha$ such that $x\in B(z;A_\alpha)$.
Then $y\notin \overline{B(z;A_\alpha L_\alpha^3)}$ and hence $B(y,\tilde R_y)\subset B(y;R_y)$ is disjoint with $\overline{B(z;A_\alpha L_\alpha^3)}$ by the choice of the entourage $R_y$.

Since $y\in X=B(A_{\alpha,z};VV^{-1})$, there is a point $a\in A_{\alpha,z}$ such that $y\in B(a;VV^{-1})$, which implies that $\emptyset\ne B(y;V)\cap B(a;V)=B(y;\tilde R_y\cap \tilde L_\alpha)\cap B(a;\tilde R_a\cap \tilde L_\alpha)$ and
hence $y\in B(a;\tilde R_a\tilde L_\alpha^{-1})$. Since $B(y,\tilde R_y)$ is disjoint with $W_{\alpha,z}$, the choice of the entourage $R_a$ guarantees that $a\notin W_{\alpha,z}$ and hence $B(a;R_aR_a)\cap\overline{B(z;A_\alpha L_\alpha^2)}=\emptyset$ and $B(a;R_aR_aL_\alpha^{-1})\cap B(z;A_\alpha L_\alpha)=\emptyset$.
Now observe that the $\tilde R_a$-ball  $B(y;\tilde R_a)\subset B(a;VV^{-1}\tilde R_a)\subset B(a;R_a\tilde L_\alpha^{-1}\tilde R_a)\subset B(a;R_aR_aL_\alpha^{-1})$ is disjoint with the $L_\alpha$-ball $B(x;L_\alpha)\subset B(z;A_\alpha L_\alpha)$.

The family $\mathcal P$ witnesses that
$$\psi(\mathcal L\mathcal R^{-1})=\psi(\mathcal R\mathcal L^{-1})\le|\mathcal P|\le\overline{\psi}(\A^{-1}\A\mathcal L)\cdot \ell(\A)\cdot \ell^{\pm2}(X).$$
\end{proof}

Taking into account that $\psi(\mathcal L\mathcal R^{-1}\vee \mathcal R\mathcal L^{-1})\le\psi(\mathcal L\mathcal R^{-1})$, and applying Theorem~\ref{t4.4}  we obtain:

\begin{theorem}\label{t4.5} Let $X$ be a Hausdorff  topological space and $\mathcal L,\mathcal R$ be two normally $\pm$-subcommuting quasi-uniformities generating the topology of $X$. Then the uniformity $\FU=\mathcal L\mathcal R^{-1}\vee\mathcal R\mathcal L^{-1}$ has pseudocharacter:
\begin{enumerate}
\item $\psi(\FU)\le\overline{\psi}(\mathcal L)\cdot\ell(\mathcal L\vee \mathcal L^{-1})\cdot \ell^{\pm2}(X)$;
\item $\psi(\FU)\le \psi(\mathcal L\mathcal L^{-1})\cdot \ell(\mathcal L^{-1})\le\psi(\mathcal L^{\pm2})\cdot q\ell^{\mp1}(X)$.
\end{enumerate}
Moreover, if the quasi-uniformity $\mathcal L$ is
\begin{enumerate}
\item[(3)] $\mp3$-separated, then $\psi(\FU)\le\overline{\psi}(\mathcal L^{-1}\mathcal L)\cdot\ell(\mathcal L)\cdot \ell^{\pm2}(X)\le \overline{\psi}(\mathcal L^{\mp2})\cdot \ell^{\pm1}(X)$.
\item[(4)] $\pm 4$-separated, then $\psi(\FU)\le\overline{\psi}(\mathcal L\mathcal L^{-1}\mathcal L)\cdot\ell(\mathcal L\mathcal L^{-1}\vee \mathcal L^{-1}\mathcal L)\cdot \ell^{\pm2}(X)\le
\overline{\psi}(\mathcal L^{\pm3})\cdot\ell^{\vee 2}(X)$;
\item[(5)] $\mp5$-separated, then $\psi(\FU)\le\overline{\psi}(\mathcal L^{-1}\mathcal L\mathcal L^{-1}\mathcal L)\cdot\ell(\mathcal L^{-1}\mathcal L)\cdot \ell^{\pm2}(X)\le \overline{\psi}(\mathcal L^{\mp4})\cdot q\ell^{\mp2}(X)\cdot \ell^{\pm2}(X)$;
\item[(6)] $\pm6$-separated, then
$\psi(\FU)\le\overline{\psi}(\mathcal L\mathcal L^{-1}\mathcal L\mathcal L^{-1}\mathcal L)\cdot \ell^{\pm2}(X)=\overline{\psi}(\mathcal L^{\pm5})\cdot\ell^{\pm2}(X)$.
\end{enumerate}
If the quasi-uniformities $\mathcal L$ and $\mathcal R$ are normally commuting and 3-separated, then
\begin{enumerate}
\item[(7)] $\psi(\FU)\le \psi(\mathcal L\mathcal L^{-1}\mathcal L)\cdot \ell(\mathcal L\mathcal L^{-1}\vee \mathcal L^{-1}\mathcal L)\le\psi(\mathcal L^{\pm3})\cdot q\ell^{\vee2}(X)$.
\end{enumerate}
If the quasi-uniformities $\mathcal L^{-1}$, $\mathcal R^{-1}$ are normally $\pm$-subcommuting and  generate the same topology on $X$, then
\begin{itemize}
\item[(8)] $\psi(\FU)\le\psi(\mathcal L^{-1}\mathcal L)\cdot\ell(\mathcal L)\le\psi(\mathcal L^{\mp2})\cdot q\ell^{\pm1}(X)$ and
\item[(9)] $\psi(\FU)\le\psi(\mathcal L \mathcal L^{-1}\vee \mathcal L^{-1}\mathcal L)\cdot \ell(\mathcal L)\cdot \ell(\mathcal L^{-1})\le \psi(\mathcal L^{\vee2})\cdot q\ell^{\pm1}(X)\cdot q\ell^{\mp1}(X)$.
\end{itemize}
\end{theorem}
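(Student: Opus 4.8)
The plan is to deduce all nine inequalities from Theorem~\ref{t4.4} by specialization, using throughout the elementary inequality $\psi(\FU)=\psi(\mathcal L\mathcal R^{-1}\vee\mathcal R\mathcal L^{-1})\le\psi(\mathcal L\mathcal R^{-1})$ (if $\V\subset\mathcal L\mathcal R^{-1}$ witnesses the right-hand side, then $\{V\cap V^{-1}:V\in\V\}\subset\FU$ witnesses the left-hand side, since $\FU^{-1}=\FU$). After this reduction every bound splits into a \emph{qualitative} part --- verifying the hypotheses of the appropriate item of Theorem~\ref{t4.4} --- and a purely \emph{bookkeeping} part. The bookkeeping uses only: $\mathcal L\subset\qU_X$, whence $\ell(\mathcal L^{\pm k})\le q\ell^{\pm k}(X)$ and likewise for the $\mp$- and $\vee$-powers; the equalities $\psi(\mathcal L\mathcal L^{-1})=\psi(\mathcal L^{\pm2})$, $\psi(\mathcal L^{-1}\mathcal L)=\psi(\mathcal L^{\mp2})$, $\psi(\mathcal L\mathcal L^{-1}\mathcal L)=\psi(\mathcal L^{\pm3})$ and their $\overline\psi$-analogues; and the inclusions $\pU_X^{\vee(n+1)}\subset\pU_X^{\wedge n}\subset\pU_X^{\pm n}$ of the power diagram, which combined with $\ell^{\pm2}(X)\le\ell^{\pm1}(X)=l(X)$ absorb the leftover factor $\ell^{\pm2}(X)$ in items (3), (4), (6).

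For the qualitative part, items (2), (7) and (8) are immediate: (2) is Theorem~\ref{t4.4}(1); (8) is Theorem~\ref{t4.4}(2) under the very hypotheses assumed; and in (7), where $\mathcal L$ (hence $\mathcal R$) is $3$-separated, one has $\bigcap\mathcal L\mathcal L^{-1}\mathcal L=\bigcap\mathcal L^{\pm3}=\Delta_X$, so Theorem~\ref{t4.4}(3) applies. For item (9) I would re-run the argument of Theorem~\ref{t4.4}(1)--(2) ``from both sides'' at once: choose $\Lambda\subset\mathcal L$ of cardinality $\psi(\mathcal L\mathcal L^{-1}\vee\mathcal L^{-1}\mathcal L)$ with $\bigcap_{L\in\Lambda}(LL^{-1}\cap L^{-1}L)=\Delta_X$, and for each $L\in\Lambda$ cover $X$ both by at most $\ell(\mathcal L^{-1})$ balls $B(z;\tilde L^{-1})$ and by at most $\ell(\mathcal L)$ balls $B(z';\tilde L)$; the resulting family of pairs witnesses $\psi(\mathcal L\mathcal R^{-1})\le\psi(\mathcal L^{\vee2})\cdot\ell(\mathcal L)\cdot\ell(\mathcal L^{-1})$.

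The substantive cases are (1) and (3)--(6), obtained from Theorem~\ref{t4.4}(4) applied to the pre-uniformities $\A=\mathcal L\vee\mathcal L^{-1}$, $\A=\mathcal L$, $\A=\mathcal L\mathcal L^{-1}\vee\mathcal L^{-1}\mathcal L$, $\A=\mathcal L^{-1}\mathcal L$ and $\A=\mathcal L^{\vee3}$, respectively. A routine computation with bases gives $\ell(\A)$ equal to $\ell(\mathcal L\vee\mathcal L^{-1})$, $\ell(\mathcal L)$, $\ell(\mathcal L\mathcal L^{-1}\vee\mathcal L^{-1}\mathcal L)$, $\ell(\mathcal L^{-1}\mathcal L)$, and $\le\ell^{\pm2}(X)$, together with inclusions $\mathcal M\subset\A^{-1}\A\mathcal L$, where $\mathcal M$ denotes respectively $\mathcal L$, $\mathcal L^{-1}\mathcal L$, $\mathcal L\mathcal L^{-1}\mathcal L$, $\mathcal L^{-1}\mathcal L\mathcal L^{-1}\mathcal L$, $\mathcal L\mathcal L^{-1}\mathcal L\mathcal L^{-1}\mathcal L$ (for instance $(L\cap L^{-1})^2L\subset L^3\in\mathcal L$, $(L^{\pm2}\cap L^{\mp2})^2L\subset L^{\pm3}$, $(L^{\pm3}\cap L^{\mp3})^2L\subset L^{\pm5}$). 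The inclusion $\mathcal M\subset\A^{-1}\A\mathcal L$ makes $\bigcap_{W\in\A^{-1}\A\mathcal L}\overline{B(x;W)}\subset\bigcap_{M\in\mathcal M}\overline{B(x;M)}$ for every $x$, so that \emph{once one knows $\bigcap\overline{\mathcal M}=\Delta_X$} one obtains simultaneously the side condition $\bigcap\overline{\A^{-1}\A\mathcal L}=\Delta_X$ of Theorem~\ref{t4.4}(4) and the comparison $\overline\psi(\A^{-1}\A\mathcal L)\le\overline\psi(\mathcal M)$ (a family $\mathcal B\subset\mathcal M$ realizing $\overline\psi(\mathcal M)$ then has $\bigcap_{B\in\mathcal B}\overline{B(x;B)}=\{x\}$, hence realizes the same value over the finer filter $\A^{-1}\A\mathcal L$).

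So the crux is the identity $\bigcap\overline{\mathcal M}=\Delta_X$, and this is exactly where the separation hypothesis on $\mathcal L$ is needed --- via the estimate already used in Corollaries~\ref{c2.3} and~\ref{c2.4}. For any $V\in\mathcal L$ one has $\overline{B(x;V)}\subset\bigcap_{L'\in\mathcal L}B(x;VL'^{-1})$, since a basic neighbourhood $B(z;L')$ of a point $z$ of the closure must meet $B(x;V)$. Taking $V$ to be a power $L^{\pm k}$ or $L^{\mp k}$ of a base entourage $L$ and $L'=L$, and intersecting over $L\in\mathcal L$, one gets $\bigcap_{M\in\mathcal M}\overline{B(x;M)}\subset\bigcap_{L\in\mathcal L}B(x;L^{\pm(k+1)})=\{x\}$ (resp.\ with $\mp$), which holds precisely under the $\pm(k{+}1)$- (resp.\ $\mp(k{+}1)$-)separation assumed in the item: $\pm2$ for (1), which is just Hausdorffness of $X$, and $\mp3$, $\pm4$, $\mp5$, $\pm6$ for (3)--(6). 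Putting the pieces together, Theorem~\ref{t4.4}(4) gives $\psi(\FU)\le\psi(\mathcal L\mathcal R^{-1})\le\overline\psi(\A^{-1}\A\mathcal L)\cdot\ell(\A)\cdot\ell^{\pm2}(X)$ in each of these items, and the bookkeeping inequalities of the first paragraph collapse it to the stated bounds. I do not expect a conceptual obstacle here: the only step requiring care is this passage from separation to closed-separation of the relevant power $\mathcal M$, together with keeping the four parities of $k{+}1$ straight.
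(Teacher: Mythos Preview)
Your plan is essentially the paper's own proof: reduce to $\psi(\mathcal L\mathcal R^{-1})$ and invoke Theorem~\ref{t4.4}(1)--(4) with the same choices of $\A$ for items (1)--(5), (7), (8); your sketch for (9) is the paper's argument in outline. For item (6) the paper takes $\A=\mathcal L\mathcal L^{-1}$ rather than your $\A=\mathcal L^{\vee3}$, but both give $\A^{-1}\A\mathcal L=\mathcal L^{\pm5}$ and $\ell(\A)\le\ell^{\pm2}(X)$, so either works.

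There is, however, a genuine slip in your justifications. The entourage-level inclusions you cite parenthetically, $(L^{\pm2}\cap L^{\mp2})^2L\subset L^{\pm3}$ and $(L^{\pm3}\cap L^{\mp3})^2L\subset L^{\pm5}$, are \emph{false} in general: the left sides are five- and seven-fold products, and there is no reason they sit inside three- and five-fold alternating products of the \emph{same} $L$. What is true --- and what both you and the paper actually need --- is the equality at the level of pre-uniformities. For instance, with $\A=\mathcal L^{\vee2}$ one has $\mathcal L^{\pm2}\subset\A$ and $\mathcal L^{\mp2}\subset\A$, so
\[
\A^{-1}\A\mathcal L\;\supset\;\mathcal L^{\pm2}\mathcal L^{\mp2}\mathcal L\;=\;\mathcal L\mathcal L^{-1}\mathcal L^{-1}\mathcal L\mathcal L\;=\;\mathcal L\mathcal L^{-1}\mathcal L\;=\;\mathcal L^{\pm3},
\]
using only the quasi-uniformity identities $\mathcal L\mathcal L=\mathcal L$ and $\mathcal L^{-1}\mathcal L^{-1}=\mathcal L^{-1}$; the reverse inclusion follows since each base element $(L^{\vee2})^2L'$ contains $L''^{\pm3}$ for $L''\subset L\cap L'$. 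The same mechanism gives $\A^{-1}\A\mathcal L=\mathcal L^{\pm5}$ for your $\A=\mathcal L^{\vee3}$ (or the paper's $\A=\mathcal L\mathcal L^{-1}$). Once you have these \emph{filter} equalities, your monotonicity step $\overline\psi(\A^{-1}\A\mathcal L)\le\overline\psi(\mathcal M)$ and the separation check $\bigcap\overline{\mathcal M}\subset\bigcap\mathcal M\mathcal L^{-1}=\Delta_X$ go through exactly as you describe.
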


\begin{proof} 1. The first inequality follows from Theorem~\ref{t4.4}(4) applied to the pre-uniformity $\A=\U\vee\U^{-1}$.
\smallskip

2. The second item follows from Theorem~\ref{t4.4}(1).
\smallskip

3--6. The items (3)--(6) follow from Theorem~\ref{t4.4}(4) applied to the pre-uniformities $\mathcal L$, $\mathcal L\mathcal L^{-1}\vee\mathcal L^{-1}\mathcal L$, $\mathcal L^{-1}\mathcal L$, and $\mathcal L\mathcal L^{-1}$, respectively.
\smallskip

7. The seventh item follows from Theorem~\ref{t4.4}(3).
\smallskip

8,9. Assume that the quasi-uniformities $\mathcal L^{-1}$, $\mathcal R^{-1}$ are normally $\pm$-subcommuting and generate the same topology on $X$. The inequalities $\psi(\FU)\le\psi(\mathcal L^{-1}\mathcal L)\cdot\ell(\mathcal L)\le\psi(\mathcal L^{\mp2})\cdot q\ell^{\pm1}(X)$ follow from Theorem~\ref{t4.4}(2).

To prove that $\psi(\FU)\le\psi(\mathcal L \mathcal L^{-1}\vee \mathcal L^{-1}\mathcal L)\cdot \ell(\mathcal L)\cdot \ell(\mathcal L^{-1})$, fix a subset $\Lambda\subset \mathcal L$ of cardinality $|\Lambda|=\psi(\mathcal L\mathcal L^{-1}\vee \mathcal L^{-1}\mathcal L)$ such that $\bigcap_{L\in\Lambda}LL^{-1}\cap L^{-1}L=\Delta_X$. Replacing every $L\in\Lambda$ by a smaller entourage, we can assume that $\bigcap_{L\in\Lambda}L^2L^{-2}\cap L^{-2}L^{2}=\Delta_X$.
Since the quasi-uniformities $\mathcal L,\mathcal R$ are normally $\pm$-subcommuting and the quasi-uniformities $\mathcal L^{-1}$, $\mathcal R^{-1}$ are normally $\pm$-subcommuting, for every $L\in\Lambda$ there exists an entourage $\tilde L\in\mathcal L$ with $\tilde L\subset L$ such that for every $R\in\mathcal R$ there is $\tilde R\in\mathcal R$ such that $\tilde L^{-1}\tilde R\subset RL^{-1}$ and $\tilde L\tilde R^{-1}\subset R^{-1}L$.

For every $L\in\Lambda$ fix a subset $Z_L\subset X$ of cardinality $|Z_L|\le\ell(\mathcal L)+\ell(\mathcal L^{-1})$ such that $X=B(Z_L;\tilde L)=B(Z_L;\tilde L^{-1})$.
Since the quasi-uniformities $\mathcal L$, $\mathcal R$ generate the same topology on $X$ and $\mathcal L^{-1}$, $\mathcal R^{-1}$ generate the same topology on $X$, for every $z\in Z_L$ we can choose an entourage $R_z\in\mathcal R$ such that  $B(z;R_z)\subset B(z;L)$ and $B(z;R_z^{-1})\subset B(z;L^{-1})$. By the choice of $\tilde L$ for the entourage $R_z$ there is an entourage $\tilde R_z\in\mathcal R$ such that $\tilde R_z\subset R_z$,  $\tilde L^{-1}\tilde R_z\subset R_zL^{-1}$ and $\tilde L\tilde R_z^{-1}\subset R_z^{-1}L$. For the entourage $\tilde R_z$ there is an entourage $\check R_z\in\mathcal R$ with $\check R_z\subset\tilde R_z$ such that $\tilde L\check R_z^{-1}\subset \tilde R_z^{-1}L$, which is equivalent to $\check R_z\tilde L^{-1}\subset L^{-1}\tilde R_z$.

We claim that the family $\mathcal P=\{(\tilde L,\check R_z):L\in\mathcal L,\;z\in Z_L\}\subset \mathcal L\times\mathcal R$ has $\bigcap_{(L,R)\in\mathcal P}LR^{-1}\cap RL^{-1}=\Delta_X$. Given any distinct points $x,y$ find an entourage $L\in\Lambda$ such that $(x,y)\notin L^2L^{-2}\cap L^{-2}L^2$ and hence $(x,y)\notin  L^2 L^{-2}$ or $(x,y)\notin L^{-2}L^2$.

If $(x,y)\notin L^2L^{-2}$, then $B(y;L^2)\cap B(x;L^2)=\emptyset$. Since $y\in X=B(Z_L;\tilde L^{-1})$, there is $z\in Z_L$ such that $y\in B(z;\tilde L^{-1})\subset B(z;L^{-1})$. Then $z\in B(y;L)$ and the $L$-ball $B(z;L)\subset B(y;LL)$ does not intersect $B(x;L^2)$, which implies $B(z;LL^{-1})\cap B(x;L)=\emptyset$. Observe that $B(y;\tilde R_z)\subset B(z;\tilde L^{-1}\tilde R_z)\subset B(z;R_zL^{-1})\subset B(z;LL^{-1})$ and hence $B(y;\tilde R_z)\cap B(x;L)\subset B(z;LL^{-1})\cap B(x;L)=\emptyset$. So, $(x,y)\notin L\tilde R^{-1}_z$ and hence $(x,y)\notin L\check R^{-1}_z$.

If $(x,y)\notin L^{-2}L^2$, then $B(y;L^{-2})\cap B(x;L^{-2})=\emptyset$. Since $y\in X=B(Z_L;\tilde L)$, there is $z\in Z_L$ such that $y\in B(z;\tilde L)$. Then $z\in B(y;\tilde  L^{-1})\subset B(y;L^{-1})$ and the $L^{-1}$-ball $B(z;L^{-1})\subset B(y;L^{-2})$ does not intersect $B(x;L^{-2})$, which implies $B(z;L^{-1}L)\cap B(x;L^{-1})=\emptyset$. Observe that $B(y;\tilde R_z^{-1})\subset B(z;\tilde L\tilde R_z^{-1})\subset B(z;R_z^{-1}L)\subset B(z;L^{-1}L)$ and hence $B(y;\tilde R_z^{-1})\cap B(x;L^{-1})\subset B(z;L^{-1}L)\cap B(x;L^{-1})=\emptyset$. So, $(x,y)\notin L^{-1}\tilde R_z$. Since $\check R_z\tilde L^{-1}\subset L^{-1}\tilde R_z$, we get also $(x,y)\notin \check R_z\tilde L^{-1}$.

This completes the proof of the equality $\bigcap_{(L,R)\in\mathcal P}LR^{-1}\cap RL^{-1}=\Delta_X$, which implies the desired inequality $$\psi(\FU)\le|\mathcal P|\le\sum_{L\in\Lambda}|Z_L|\le\psi(\mathcal L\mathcal L^{-1}\vee\mathcal L^{-1}\mathcal L)\cdot\ell(\mathcal L)\cdot\ell(\mathcal L^{-1}).$$
\end{proof}

In Section~\ref{s7} we shall need the following upper bound on the local pseudocharacters  $\dot\psi(\mathcal L\mathcal L^{-1})$ and $\dot\psi(\mathcal R\mathcal R^{-1})$ of normally $\pm$-subcommuting quasi-uniformities $\mathcal L$ and $\mathcal R$.

\begin{proposition}\label{p5.1} If the topology of a Hausdorff space $X$ is generated by two normally $\pm$-subcommuting quasi-uniformities $\mathcal L$ and $\mathcal R$, then  $\dot\psi(\mathcal L\mathcal L^{-1})\le\overline{\psi}(X)\cdot \ell^{\pm 2}(X)$ and $\dot\psi(\mathcal R\mathcal R^{-1})\le\overline{\psi}(X)\cdot \ell^{\pm 2}(X)$.
\end{proposition}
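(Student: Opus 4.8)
The plan is to prove $\dot\psi(\mathcal L\mathcal L^{-1})\le\overline\psi(X)\cdot\ell^{\pm2}(X)$; the bound for $\dot\psi(\mathcal R\mathcal R^{-1})$ then follows by interchanging the roles of $\mathcal L$ and $\mathcal R$. Fixing $x\in X$, I would first record the reduction. The family $\{LL^{-1}:L\in\mathcal L\}$ is a base of $\mathcal L\mathcal L^{-1}$, and since $X$ is Hausdorff with topology generated by $\mathcal L$, any two distinct points $x,y$ admit $L\in\mathcal L$ with $B(x;L)\cap B(y;L)=\emptyset$, i.e. $y\notin B(x;LL^{-1})$; hence $\bigcap_{U\in\mathcal L\mathcal L^{-1}}B(x;U)=\bigcap_{L\in\mathcal L}B(x;LL^{-1})=\{x\}$. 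So it suffices to produce a family $\Lambda\subset\mathcal L$ with $|\Lambda|\le\overline\psi(X)\cdot\ell^{\pm2}(X)$ such that already $\bigcap_{L\in\Lambda}B(x;LL^{-1})=\{x\}$, equivalently: for every $y\ne x$ some $L\in\Lambda$ has $B(x;L)\cap B(y;L)=\emptyset$. Then $\{LL^{-1}:L\in\Lambda\}$ witnesses the inequality at $x$.

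For the set-up, write $\kappa=\overline\psi(X)$ and $\lambda=\ell^{\pm2}(X)$. Using that $X$ is Hausdorff I would fix open neighborhoods $O_\alpha\ni x$, $\alpha<\kappa$, with $\bigcap_{\alpha<\kappa}\overline{O_\alpha}=\{x\}$; since $\mathcal L$ is a quasi-uniformity generating the topology of $X$ one may choose $L_\alpha\in\mathcal L$ with $B(x;L_\alpha^3)\subset O_\alpha$, so that $\bigcap_{\alpha<\kappa}\overline{B(x;L_\alpha^3)}=\{x\}$, and the normality of $\mathcal L$ (Proposition~\ref{p4.3}) gives open $W_\alpha$ with $\overline{B(x;L_\alpha^2)}\subset W_\alpha\subset\overline{B(x;L_\alpha^3)}$ (take $W_\alpha=\overline{B(x;L_\alpha^3)}^\circ$). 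The goal becomes: for each $\alpha$ produce $\Lambda_\alpha\subset\mathcal L$ with $|\Lambda_\alpha|\le\lambda$ and $\bigcap_{L\in\Lambda_\alpha}B(x;LL^{-1})\subset\overline{B(x;L_\alpha^3)}$; then $\Lambda:=\bigcup_{\alpha<\kappa}\Lambda_\alpha$ works, as $\bigcap_{L\in\Lambda}B(x;LL^{-1})\subset\bigcap_{\alpha}\overline{B(x;L_\alpha^3)}=\{x\}$.

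The construction of $\Lambda_\alpha$ is the heart of the proof, and here I would follow, up to this localization, the proof of Theorem~\ref{t4.4}(4): the pre-uniformity $\A$ used there (with $\bigcap\overline{\A^{-1}\A\mathcal L}=\Delta_X$) is replaced by the $\overline\psi(X)$-sized family of neighborhoods of $x$. Fix $\alpha$. By normal $\pm$-subcommutativity pick $\tilde L_\alpha\in\mathcal L$, $\tilde L_\alpha\subset L_\alpha$, so that every $R\in\mathcal R$ admits $\tilde R\in\mathcal R$ with $\tilde L_\alpha^{-1}\tilde R\subset RL_\alpha^{-1}$. Build a neighborhood assignment $V_\alpha$ keeping $V_\alpha$-balls of points of $W_\alpha$ inside $W_\alpha$ and, for $y\notin W_\alpha$, taking $B(y;V_\alpha)=B(y;\tilde R_y\cap\tilde L_\alpha)$, where $B(y;R_y)$ is a small $\mathcal R$-ball disjoint from $\overline{B(x;L_\alpha^2)}$ and $\tilde R_y\subset R_y$ is compatible with $\tilde L_\alpha$ as above. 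Apply the definition of $\ell^{\pm2}(X)$ to $V_\alpha$ to get $A_\alpha\subset X$, $|A_\alpha|\le\lambda$, with $B(A_\alpha;V_\alpha V_\alpha^{-1})=X$; and for each $a\in A_\alpha\setminus W_\alpha$ use $a\notin\overline{B(x;L_\alpha^2)}$ to choose $L_{\alpha,a}\in\mathcal L$, $L_{\alpha,a}\subset L_\alpha$, separating $a$ from $B(x;L_\alpha)$ with enough room (of forward $L_\alpha$-type) to absorb the $\mathcal R$-to-$\mathcal L$ passage below. Put $\Lambda_\alpha:=\{L_\alpha\}\cup\{L_{\alpha,a}:a\in A_\alpha\setminus W_\alpha\}$.

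To verify $\bigcap_{L\in\Lambda_\alpha}B(x;LL^{-1})\subset\overline{B(x;L_\alpha^3)}$, take $y\notin\overline{B(x;L_\alpha^3)}$ (so $y\notin W_\alpha$, and $B(y;R_y)$ may be taken disjoint from $\overline{B(x;L_\alpha^3)}$), pick $a\in A_\alpha$ with $y\in B(a;V_\alpha V_\alpha^{-1})$; since $B(a;V_\alpha)$ meets $B(y;V_\alpha)\subset B(y;R_y)$, which misses $W_\alpha$, one gets $a\notin W_\alpha$; then the chain $B(y;\tilde R_a)\subset B(a;V_\alpha V_\alpha^{-1}\tilde R_a)\subset B(a;R_a\tilde L_\alpha^{-1}\tilde R_a)\subset B(a;R_aR_aL_\alpha^{-1})$ (the last inclusion by the choice of $\tilde L_\alpha$) places a neighborhood of $y$ inside a set disjoint from $B(x;L_\alpha)$; converting that $\mathcal R$-neighborhood of $y$ to the $\mathcal L$-ball $B(y;L_{\alpha,a})$ and using $L_{\alpha,a}\subset L_\alpha$ then gives $B(x;L_{\alpha,a})\cap B(y;L_{\alpha,a})=\emptyset$, i.e. $y\notin B(x;L_{\alpha,a}L_{\alpha,a}^{-1})$. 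The main obstacle I anticipate is precisely this last conversion together with the bookkeeping of powers of $L_\alpha$: the entourage $L_{\alpha,a}$ may depend on $a$ but not on the particular $y$ that $a$ catches in the $\ell^{\pm2}(X)$-net, so one must arrange that a single forward $L_\alpha$-step leaves enough slack to absorb the $L_\alpha^{-1}$ arising both in the chain and in the closures $\overline{B(x;L_\alpha^k)}$. It is here that the normality of $\mathcal L$ (Proposition~\ref{p4.3}) is essential — it is what makes the closed neighborhoods $\overline{B(x;L_\alpha^k)}^\circ$ usable while still returning to balls of the form $B(x;L_\alpha)$ — and it is also here that the second quasi-uniformity $\mathcal R$ enters, normal $\pm$-subcommutativity being the only available tool for passing between $\mathcal L$-balls and $\mathcal R$-balls at points other than $x$; without it the net extracted from $\ell^{\pm2}(X)$ could not be turned into a family of $\mathcal L$-entourages.
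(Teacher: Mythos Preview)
Your outline follows the shape of the paper's argument (neighborhood assignment, $\ell^{\pm2}$-net, verification via the inclusion $\tilde L^{-1}\tilde R\subset RL^{-1}$), but there is one structural choice that differs from the paper and creates the very gap you flag at the end. You take the closed pseudocharacter at $x$ with $\mathcal L$-balls, writing $\bigcap_\alpha\overline{B(x;L_\alpha^3)}=\{x\}$, and then use the first clause of normal $\pm$-subcommutativity ($\forall L\,\exists\tilde L\,\forall R\,\exists\tilde R:\tilde L^{-1}\tilde R\subset RL^{-1}$). With this quantifier order the entourage produced for each point of the net is $\tilde R_a\in\mathcal R$; the chain you write indeed gives $B(y;\tilde R_a)\cap B(x;L_\alpha)=\emptyset$, exactly as in Theorem~\ref{t4.4}(4). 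But that theorem bounds $\psi(\mathcal L\mathcal R^{-1})$, so an $\mathcal R$-ball at $y$ is the desired output; here you still need to replace $B(y;\tilde R_a)$ by an $\mathcal L$-ball $B(y;L_{\alpha,a})$ with $L_{\alpha,a}$ independent of $y$, and nothing in the hypotheses lets you do this. Knowing $a\notin\overline{B(x;L_\alpha^2)}$ gives you control of $a$, not of the varying $y$'s caught by $a$, and the only link between $y$ and $a$ that stays inside $\mathcal L$ is $y\in B(a;\tilde L_\alpha\tilde L_\alpha^{-1})$, which involves the fixed $\tilde L_\alpha$ rather than any small $L_{\alpha,a}$.

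The paper sidesteps this by reversing the roles: it realizes the closed pseudocharacter at $x$ through $\mathcal R$-balls (a family $\mathcal R_x\subset\mathcal R$ with $\bigcap_{R\in\mathcal R_x}\overline{B(x;R^3)}=\{x\}$) and then invokes the \emph{second} clause of normal $\pm$-subcommutativity, $\forall R\,\exists\tilde R\,\forall L\,\exists\tilde L:\tilde R^{-1}\tilde L\subset LR^{-1}$. For each $z$ one first picks $L_z\in\mathcal L$ with $B(z;L_z^2)\cap\overline{B(x;R^2)}=\emptyset$ (for $z$ outside $\overline{B(x;R^3)}^\circ$) and then obtains $\tilde L_z\in\mathcal L$ from $L_z$; the $\ell^{\pm2}$-net $Z_R$ is taken for the assignment $z\mapsto B(z;\tilde L_z\cap\tilde R)$, and the family $\{\tilde L_z:R\in\mathcal R_x,\ z\in Z_R\}\subset\mathcal L$ already lies in $\mathcal L$---no conversion is needed. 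The verification then shows $B(y;\tilde L_z)\cap B(x;\tilde L_z)=\emptyset$ directly, using $\tilde R^{-1}\tilde L_z\subset L_zR^{-1}$ together with the extra normalization $B(x;\tilde L_z)\subset B(x;R)$. So the missing idea in your attempt is simply to take the $\overline\psi(X)$-witnesses from $\mathcal R$, not from $\mathcal L$; once you do that, the ``conversion'' you were worried about disappears.
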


\begin{proof} First we prove that $\dot\psi(\mathcal L\mathcal L^{-1})\le\overline{\psi}(X)\cdot \ell^{\pm 2}(X)$. Fix any point $x\in X$.
Since the topology of $X$ is generated by the quasi-uniformity $\mathcal R$, we can fix a subfamily $\mathcal R_x\subset\mathcal R$ of cardinality  $|\mathcal R_x|\le \overline{\psi}_x(X)\le\overline{\psi}(X)$ such that $\bigcap_{R\in\mathcal R_x}\overline{B(x;RRR)}=\{x\}$.

By the normality of the quasi-uniformity $\mathcal R$, for every $R\in\mathcal R_x$ we get $\overline{B(x;RR)}\subset \overline{B(x;RRR)}^\circ$. Then for every point $z\in X\setminus \overline{B(x;RRR)}^\circ$ we can find an entourage $L_z\in \mathcal L$ such that  $B(z;L_zL_z)\cap \overline{B(x;RR)}=\emptyset$.
For every point $z\in \overline{B(x;RRR)}^\circ$ choose an entourage $L_z\in\mathcal L$ such that $B(z;L_zL_z)\subset \overline{B(x;RRR)}^\circ$. Since the quasi-uniformities $\mathcal L$ and $\mathcal R$ are normally $\pm$-subcommuting, for the entourage $R\in\mathcal R$ there is an entourage $\tilde R\in\mathcal R$ such that for every entourage $L\in\mathcal L$ there is an entourage $\tilde L\in\mathcal L$ such that $\tilde R^{-1}\tilde L\subset LR^{-1}$. In particular, for every $z\in Z$ there is an entourage $\tilde L_z\in\mathcal L$ such that $\tilde R^{-1} \tilde L_z\subset L_zR^{-1}$. Replacing $\tilde L_z$ by a smaller entourage we can assume that $\tilde L_z\subset L_z$ and $B(x;\tilde L_z)\subset B(x;R)$.

By the definition of $\ell^{\pm2}(X)$, for the neighborhood assignment $N_R=\bigcup_{z\in X}\{z\}\times B(z;\tilde L_z\cap \tilde R)$ there is a subset $Z_R\subset X$ of cardinality $|Z_R|\le\ell^{\pm2}(X)$ such that $X=B(Z_R;N_RN_R^{-1})$.

We claim that the subfamily $\mathcal L'=\bigcup_{R\in\mathcal R_x}\{\tilde L_z:z\in Z_R\}\subset\mathcal L$ has the required property: $\bigcap_{L\in\mathcal L'}B(x;LL^{-1})=\{x\}$.
Given any point $y\in X\setminus \{x\}$, find an entourage $R\in\mathcal R_x$ such that $y\notin\overline{B(x;RRR)}$. Since $y\in X=B(Z_R;N_RN_R^{-1})$, there is a point $z\in Z_R$ such that $y\in B(z;N_RN_R^{-1})$ and hence $B(y;L_y\cap\tilde R)\cap B(z;L_z\cap\tilde R)=B(y;N_R)\cap B(z;N_R)\ne\emptyset$ and $y\in B(z;L_z\tilde R^{-1})$.
Since $y\notin\overline{B(x;RRR)}^\circ$, the choice of the entourages $L_y,L_z$ implies that $z\notin \overline{B(x;RRR)}^\circ$. We claim that $B(y;\tilde L_z)\cap B(x;\tilde L_z)=\emptyset$. To derive a contradiction, assume that
$B(y;\tilde L_z)\cap B(x;\tilde L_z)\ne\emptyset$. Then $$\emptyset \ne B(y;\tilde L_z)\cap B(x;\tilde L_z)\subset B(z;\tilde L_z\tilde R^{-1}\tilde L_z)\cap B(x;R)\subset B(z;\tilde L_zL_zR^{-1})\cap B(x;R)$$ and hence $ B(z;L_zL_z)\cap B(x;RR)\ne \emptyset$, which contradicts the choice of the entourage $L_z$. This contradiction completes the proof of the inequality $\dot\psi(\mathcal L\mathcal L^{-1})\le \overline{\psi}(X)\cdot \ell^{\pm 2}(X)$.

By analogy (or changing $\mathcal L$ and $\mathcal R$ by their places)
we can prove that $\dot\psi(\mathcal R\mathcal R^{-1})\le \overline{\psi}(X)\cdot \ell^{\pm 2}(X)$.
\end{proof}

\section{Quasi-uniformities on topological monoids}\label{s6}

A {\em topological monoid} is a topological semigroup $X$ possessing a (necessarily unique) two-sided unit $e\in X$. We shall say that a topological monoid $S$ has {\em open shifts} if for any elements $a,b\in X$ the two-sided shift $s_{a,b}:X\to X$, $s_{a,b}:x\mapsto axb$, is an open map.

A typical example of a topological monoid with open shifts is a {\em paratopological group}, i.e., a group endowed with a topology making the group operation $G\times G\to G$, $(x,y)\mapsto xy$, continuous.

The closed half-line $[0,\infty)$ endowed the Sorgenfrey topology (generated by the base $\mathcal B=\{[a,b):0\le a<b<\infty\}$) and the operation of addition of real numbers is a topological monoid with open shifts, which is not a (paratopological) group.

Each topological monoid $X$ carries five natural quasi-uniformities:
\begin{itemize}
\item the {\em left quasi-uniformity} $\mathcal L$, generated by the base $\big\{\{(x,y)\in X\times X:y\in xU\}:U\in\mathcal N_e\big\}$,
\item the {\em right quasi-uniformity} $\mathcal R$, generated by the base $\big\{\{(x,y)\in X\times X:y\in Ux\}:U\in\mathcal N_e\big\}$,
\item the {\em two-sided quasi-uniformity} $\mathcal L\vee \mathcal R$, generated by the base $\big\{\{(x,y)\in X\times X:y\in Ux\cap xU\}:U\in\mathcal N_e\big\}$,
\item the {\em Roelcke quasi-uniformity} $\mathcal R\mathcal L=\mathcal L\mathcal R$, generated by the base $\big\{\{(x,y)\in X\times X:y\in UxU\}:U\in\mathcal N_e\big\}$, and
\item the {\em quasi-Roelcke uniformity} $\FU=\mathcal R\mathcal L^{-1}\vee\mathcal L\mathcal R^{-1}$, generated by the base\newline $\big\{\{(x,y)\in X\times X:Ux\cap yU\ne\emptyset\ne Uy\cap xU\}:U\in\mathcal N_e\big\}$.
\end{itemize}
Here by $\mathcal N_e$ we denote the family of all open neighborhoods of the unit $e$ in $X$.
The quasi-uniformities $\mathcal L$, $\mathcal R$, $\mathcal L\vee\mathcal R$, and $\mathcal R\mathcal L$ are well-known in the theory of topological and paratopological groups (see \cite[Ch.2]{RD}, \cite[\S1.8]{AT}). The quasi-Roelcke uniformity was recently introduced in \cite{BR}. It should be mentioned that on topological groups the quasi-Roelcke uniformity coincides with the Roelcke (quasi-)uniformity. The following diagram describes the relation between these five quasi-uniformities (an arrow $\U\to\V$ in the diagram indicates that $\U\subset\V$).
$$\xymatrix{
&\mathcal L\vee\mathcal R\\
\mathcal L\ar[ru]&\FU\ar[r]\ar[l]&\mathcal R\ar[lu]\\
&\mathcal R\mathcal L\ar[lu]\ar[ru]
}$$

If a topological monoid $X$ has open shifts, then the quasi-uniformities $\mathcal L$, $\mathcal R$, $\mathcal L\vee\mathcal R$ and $\mathcal R\mathcal L$ generate the original topology of $X$  (see \cite{Koper}, \cite{KMR}) whereas the quasi-Roelcke uniformity $\FU$ generates a topology $\tau_{\FU}$, which is (in general, strictly) weaker than the topology $\tau$ of $X$. If $X$ is a paratopological group, then the topology $\tau_{\FU}$ on $G$ coincides with the joint $\tau_2\vee (\tau^{-1})_2$ of the second oscillator topologies considered by the authors in \cite{BR02}. The topology $\tau_{\FU}$ turns the paratopological group into a quasi-topological group, i.e., a group endowed with a topology in which the inversion and all shifts are continuous (see Proposition~\ref{p7.3}).

\begin{proposition}\label{p6.1} On each topological monoid $X$ with open shifts the quasi-uniformities $\mathcal L$ and $\mathcal R$ are normally commuting, normally $\pm$-subcommuting, and normal. The topology of $X$ is Hausdorff if and only if the quasi-Roelcke uniformity $\FU=\mathcal L\mathcal R^{-1}\vee\mathcal R\mathcal L^{-1}$ on $X$ is separated.
\end{proposition}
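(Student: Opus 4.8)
The plan is to compute everything in terms of the explicit basic entourages. For $U\in\mathcal N_e$ write $L_U=\{(x,y):y\in xU\}$ and $R_U=\{(x,y):y\in Ux\}$, so that $B(x;L_U)=xU$ and $B(x;R_U)=Ux$. Associativity of the multiplication together with $e\in U$ gives at once $L_UL_V=L_{UV}$, $R_UR_V=R_{VU}$ and, crucially, $L_UR_V=R_VL_U=\{(x,z):z\in VxU\}$ (this last identity is exactly the assertion that $\mathcal L$ and $\mathcal R$ commute, with common product the Roelcke quasi-uniformity). In the same way one computes $B(x;R_V^{-1}L_U)=\bigcup\{yU:x\in Vy\}$, $B(x;L_UR_V^{-1})=\{z:xU\cap Vz\ne\emptyset\}$, the mirror formulas for $L_U^{-1}R_V$ and $R_VL_U^{-1}$, and $B(x;L_UR_U^{-1}\cap R_UL_U^{-1})=\{y:xU\cap Uy\ne\emptyset\ne Ux\cap yU\}$; this is pure bookkeeping, but getting the sides right is the one place where care is needed.

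I would handle normality first, for $\mathcal L$ (the case of $\mathcal R$ being symmetric, using right shifts). The only input is the elementary fact that in a topological semigroup $\overline A\cdot B\subseteq\overline{AB}$ for all $A,B\subseteq X$: if $x\in\overline A$, $b\in B$ and $O$ is open with $xb\in O$, continuity of multiplication gives open $O_1\ni x$, $O_2\ni b$ with $O_1O_2\subseteq O$, and any $a\in O_1\cap A$ yields $ab\in O\cap AB$. Now given $A\subseteq X$ and $L\in\mathcal L$, pick $U\in\mathcal N_e$ with $L_U\subseteq L$, so that $B(A;L)\supseteq AU$. For $x\in\overline A$ we have $xU=\{x\}U\subseteq\overline A\cdot U\subseteq\overline{AU}$, and since the left shift $l_x=s_{x,e}$ is open, $xU$ is an open neighbourhood of $x$ lying in $\overline{AU}$; hence $x\in\overline{AU}^\circ\subseteq\overline{B(A;L)}^\circ$, i.e. $\mathcal L$ is normal. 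Openness of shifts is genuinely used here: without it one only gets $\overline A\subseteq\overline{AU}$, not the needed interior.

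For the ``normally commuting'' and ``normally $\pm$-subcommuting'' properties the key observation is that every required inclusion already holds at the level of the basic entourages, so reordering the quantifiers costs nothing. Given $L\in\mathcal L$ choose $U$ with $L_U\subseteq L$ and put $\tilde L:=L_U$; given $R\in\mathcal R$ choose $V$ with $R_V\subseteq R$ and put $\tilde R:=R_V$. Then $\tilde L\tilde R=\tilde R\tilde L=L_UR_V\subseteq LR\cap RL$, which gives normal commutativity directly (in either order of the quantifiers). For normal $\pm$-subcommutativity it remains to verify $R_V^{-1}L_U\subseteq L_UR_V^{-1}$ and $L_U^{-1}R_V\subseteq R_VL_U^{-1}$; the first amounts to showing that whenever $x\in Vy$ and $u\in U$ one has $xU\cap V(yu)\ne\emptyset$, which is witnessed by the point $xu$ (writing $x=vy$ with $v\in V$, one gets $xu=vyu\in V(yu)$ and $xu\in xU$, using $e\in U\cap V$), and the second is the mirror statement. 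Combining these with $LR^{-1}\supseteq L_UR_V^{-1}$ and $RL^{-1}\supseteq R_VL_U^{-1}$, the same choices of $\tilde L,\tilde R$ deliver all the inclusions demanded in either order of quantification.

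Finally, $\FU$ is separated iff for every pair of distinct points $x,y$ there is $U\in\mathcal N_e$ with $xU\cap Uy=\emptyset$ or $Ux\cap yU=\emptyset$, by the ball formula for $L_UR_U^{-1}\cap R_UL_U^{-1}$. If $X$ is Hausdorff, choose disjoint open $O_x\ni x$, $O_y\ni y$; continuity of multiplication at $(e,x)$ and $(x,e)$ (and likewise for $y$) yields $U$ with $xU\subseteq O_x$ and $Uy\subseteq O_y$, so $xU\cap Uy=\emptyset$ and hence $(x,y)\notin\bigcap\FU$. Conversely, if $\FU$ is separated and $x\ne y$, pick such a $U$: in the case $xU\cap Uy=\emptyset$ the sets $xU$ and $Uy$ are disjoint, open (openness of $l_x$ and $r_y$), and contain $x$ and $y$ respectively, while the case $Ux\cap yU=\emptyset$ is handled by $Ux$ and $yU$; so $X$ is Hausdorff. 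I expect no serious obstacle here — the proof is a sequence of short entourage computations — the only genuine content being the use of open shifts in the normality clause and in the ``separated $\Rightarrow$ Hausdorff'' implication.
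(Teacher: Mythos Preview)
Your proof is correct and follows essentially the same computational route as the paper for the commuting, $\pm$-subcommuting, and Hausdorff $\Leftrightarrow$ separated parts. The one genuine difference is your treatment of normality: the paper first establishes that $\mathcal L,\mathcal R$ are normally $\pm$-subcommuting and then invokes Proposition~\ref{p4.3} (normally $\pm$-subcommuting quasi-uniformities generating the same topology are normal), whereas you give a direct argument using $\overline A\cdot U\subseteq\overline{AU}$ and openness of the left shift $l_x$ to conclude $xU\subseteq\overline{AU}$ is an open neighbourhood of each $x\in\overline A$. Your route is more self-contained and makes the role of open shifts in normality completely transparent; the paper's route is more economical inside the paper's architecture, since Proposition~\ref{p4.3} is already available.
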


\begin{proof} To see that the quasi-uniformities $\mathcal L$ and $\mathcal R$ are normally commuting and normally $\pm$-subcommuting, fix any entourage $L\in\mathcal L$ and find a neighborhood $U\subset G$ of the unit $e$ such that $\tilde L=\{(x,y)\in X\times X:y\in xU\}\subset L$. Given any entourage $R\in\mathcal R$, find a neighborhood $V\subset G$ of the unit $e$ such that $\tilde R=\{(x,y)\in X\times X:y\in Vx\}\subset R$. Then
$$
\begin{aligned}
\tilde L\tilde R&=\{(x,y)\in X\times X:\mbox{$\exists z\in X$ such that $(x,z)\in \tilde L$ and $(z,y)\in\tilde R$}\}=\\
&=\{(x,y)\in X\times X:\mbox{$\exists z\in X$ such that $z\in xU$ and $y\in Vz$}\}=\\
&=\{(x,y)\in X\times X:y\in V(xU)\}=\{(x,y)\in X\times X:y\in (Vx)U\}=\tilde R\tilde L\subset RL\cap LR.
\end{aligned}
$$
This implies that the quasi-uniformities $\mathcal L$ and $\mathcal R$ are normally commuting.

Next, we prove that $\tilde L^{-1}\tilde R\subset \tilde R\tilde L^{-1}\subset RL^{-1}$. Given any pair $(x,y)\in\tilde L^{-1}\tilde R$, find a point $z\in X$ such that $(x,z)\in\tilde L^{-1}$ and $(z,y)\in\tilde R$. Then $x\in zU$ and $y\in Vz$. So, we can find points $u\in U$ and $v\in V$ such that $x=zu$ and $y=vz$. Multiplying $x=zu$ by $v$, we get $vx=vzu=yu$ and hence $(x,vx)\in \tilde R$ and $(y,vx)=(y,yu)\in \tilde L$, which implies that $(x,y)\in\tilde R\tilde L^{-1}\subset RL^{-1}$. So, $\tilde L^{-1}\tilde R\subset \tilde R\tilde L^{-1}\subset RL^{-1}$. By analogy we can prove that $\tilde R^{-1}\tilde L\subset \tilde L\tilde R^{-1}\subset LR^{-1}$.

By Proposition~\ref{p4.3}, the quasi-uniformities $\mathcal L$ and $\mathcal R$, being normally $\pm$-subcommuting, are normal.
\smallskip

If $X$ is Hausdorff, then for any distinct points $x,y\in X$ we can find a neighborhood $U\subset X$ of the unit $e$ such that $Ux\cap yU=\emptyset$. Then for the entourages $L=\{(x,y)\in X:y\in xU\}\in\mathcal L$ and $R=\{(x,y)\in X\times X:y\in Ux\}$ we get $y\notin B(x;RL^{-1})\supset B(x;RL^{-1}\cap LR^{-1})$. This means that $\bigcap\FU=\Delta_X$ and the quasi-Roelcke uniformity  $\FU$ is separated.

Now assume that the quasi-Roelcke uniformity $\FU$ is separated. Given two distinct points $x,y\in X$, find two entourages $L\in\mathcal L$ and $R\in\mathcal R$ such that $(x,y)\notin LR^{-1}\cap RL^{-1}$ and hence $(x,y)\notin LR^{-1}$ or $(x,y)\notin RL^{-1}$. For the entourages $L,R$, find a neighborhood $U\subset X$ of $e$ such that
$\{(x,y)\in X\times X:y\in xU\}\subset L$ and $\{(x,y)\in X\times X:y\in Ux\}\subset R$. If $(x,y)\notin LR^{-1}$, then $xU\cap Uy=\emptyset$. If $(x,y)\in RL^{-1}$, then $Ux\cap yU=\emptyset$. In both cases the points $x,y$ has disjoint neighborhoods in $X$, which means that $X$ is Hausdorff.
\end{proof}

Proposition~\ref{p6.1} and Theorem~\ref{t3.3} imply:

\begin{theorem}\label{t6.2} Each Hausdorff topological monoid $X$ with open shifts is functionally Hausdorff and has submetrizability number $sm(X)\le\psi(\FU)\le\chi(X)$ and $i$-weight $iw(X)\le \psi(\FU)\cdot\log(\ell(\FU))\le \chi(X)\cdot \log(\dc(X))$.
\end{theorem}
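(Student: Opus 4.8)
The plan is to derive the statement by simply combining Proposition~\ref{p6.1} with Theorem~\ref{t3.3}, so most of the work is already done and what remains is a short assembly. First I would recall that, as noted just before Proposition~\ref{p6.1} (see \cite{Koper}, \cite{KMR}), on a topological monoid $X$ with open shifts the left and right quasi-uniformities $\mathcal L$ and $\mathcal R$ both generate the original topology of $X$. Proposition~\ref{p6.1} tells us that $\mathcal L$ and $\mathcal R$ are normally $\pm$-subcommuting; since this condition is obtained from plain $\pm$-subcommutativity merely by strengthening the order of the quantifiers (as explained at the start of Section~\ref{s4}), the pair $\mathcal L,\mathcal R$ is in particular $\pm$-subcommuting. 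Hence Theorem~\ref{t3.3} applies to these concrete quasi-uniformities, and the uniformity $\mathcal L\mathcal R^{-1}\vee\mathcal R\mathcal L^{-1}$ occurring there is precisely the quasi-Roelcke uniformity $\FU$.

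Next, because $X$ is Hausdorff, Theorem~\ref{t3.3} directly yields that $X$ is functionally Hausdorff and that
$$sm(X)\le\psi(\FU)\le\chi(\mathcal L)\cdot\chi(\mathcal R)\quad\text{and}\quad iw(X)\le\psi(\FU)\cdot\log(\ell(\FU))\le\chi(\mathcal L)\cdot\chi(\mathcal R)\cdot\log(\dc(X)).$$
The only thing still to be checked is $\chi(\mathcal L)\cdot\chi(\mathcal R)\le\chi(X)$. For this I would invoke the explicit bases of $\mathcal L$ and $\mathcal R$: any neighborhood base $(U_i)_{i\in I}$ at the unit $e$ gives rise to the bases $\{\{(x,y):y\in xU_i\}:i\in I\}$ of $\mathcal L$ and $\{\{(x,y):y\in U_ix\}:i\in I\}$ of $\mathcal R$, so $\chi(\mathcal L)\le\chi_e(X)\le\chi(X)$ and likewise $\chi(\mathcal R)\le\chi(X)$. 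When $\chi(X)$ is infinite, $\chi(\mathcal L)\cdot\chi(\mathcal R)\le\chi(X)$ follows from cardinal arithmetic; when it is finite, a Hausdorff monoid with open shifts and finite character must be discrete (its unit has a smallest neighborhood, which Hausdorffness forces to be $\{e\}$, and open shifts then make every singleton open), whence $\Delta_X$ is an entourage of both $\mathcal L$ and $\mathcal R$ and $\chi(\mathcal L)=\chi(\mathcal R)=1=\chi(X)$, so the inequality is trivial. Substituting into the displayed estimates completes the proof.

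I do not expect any genuine obstacle: the substantive content lives entirely in Proposition~\ref{p6.1} (whose proof exploits the open-shift structure) and in Theorem~\ref{t3.3} (which itself rests on Theorem~\ref{t1.4} and Proposition~\ref{p1.3}). The only places requiring slight care are the degenerate finite-character case in the last step and the passage from the normal $\pm$-subcommutativity supplied by Proposition~\ref{p6.1} to the plain $\pm$-subcommutativity demanded by Theorem~\ref{t3.3}; neither is more than a one-line observation.
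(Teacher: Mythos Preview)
Your proposal is correct and matches the paper's own argument: the paper simply records that Proposition~\ref{p6.1} and Theorem~\ref{t3.3} imply Theorem~\ref{t6.2}, and you have spelled out precisely that derivation, including the routine reduction $\chi(\mathcal L)\cdot\chi(\mathcal R)\le\chi(X)$ via the explicit bases of $\mathcal L$ and $\mathcal R$ indexed by $\mathcal N_e$.
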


Observe that for a paratopological group $G$ the quasi-Roelcke uniformity $\FU$ generates the topology of $G$ if and only if $G$ is a topological group.

\begin{problem} Study properties of topological monoids $S$ with open shifts whose topology is generated by the quasi-Roelcke uniformity $\FU$.
\end{problem}

\section{The submetrizability number and $i$-weight of paratopological groups}\label{s7}

In this section we apply the results of the preceding sections to paratopological groups, i.e., groups $G$ endowed with a topology making the group operation $G\times G\to G$, $(x,y)\mapsto xy$, continuous. It is easy to see that the inversion map $G\to G$, $x\mapsto x^{-1}$, is a uniform homeomorphism of the quasi-uniform spaces $(G,\mathcal L^{-1})$ and $(G,\mathcal R)$ and also a uniform homeomorphism of the quasi-uniform spaces $(G,\mathcal R^{-1})$ and $(G,\mathcal L)$. This observation combined with Propositions~\ref{p3.5} and \ref{p6.1} implies:

\begin{proposition}\label{p7.1} On each paratopological group $G$
\begin{enumerate}
\item the quasi-uniformities $\mathcal L$ and $\mathcal R$ are normally commuting, normally $\pm$-subcommuting, and normal;
\item the quasi-uniformities $\mathcal L^{-1}$ and $\mathcal R^{-1}$ are normally commuting, normally $\pm$-subcommuting, and generate the same topology on $G$.
\end{enumerate} If the topology of $G$ is Hausdorff, then the quasi-uniformities $\mathcal L$ and $\mathcal R$ are 3-separated and  the quasi-Roelcke uniformity $\FU=\mathcal L\mathcal R^{-1}\vee\mathcal R\mathcal L^{-1}$ is separated.
\end{proposition}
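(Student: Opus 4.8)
The plan is to read Proposition~\ref{p7.1} off three earlier facts: Proposition~\ref{p6.1} applied to $G$ regarded as a topological monoid, the remark just before the statement that inversion conjugates $\mathcal L^{-1}$ with $\mathcal R$ and $\mathcal R^{-1}$ with $\mathcal L$, and Proposition~\ref{p3.5}. First I would observe that a paratopological group $G$ is a topological monoid with open shifts: for $a,b\in G$ the shift $x\mapsto axb$ is the composition of the left translation $x\mapsto ax$ and the right translation $x\mapsto xb$, each of which is a homeomorphism of $G$ (its inverse being translation by $a^{-1}$, respectively $b^{-1}$), hence is itself a homeomorphism and in particular open. So Proposition~\ref{p6.1} applies verbatim and gives item (1), and the discussion of Section~\ref{s6} gives in addition that $\mathcal L$ and $\mathcal R$ generate the original topology $\tau$ of $G$.

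For item (2) I would use the inversion map $i\colon G\to G$, $x\mapsto x^{-1}$. A one-line computation on the standard base entourages $L_U=\{(x,y):y\in xU\}$ and $R_U=\{(x,y):y\in Ux\}$ ($U\in\mathcal N_e$) shows $(i\times i)(L_U^{-1})=R_U$ and $(i\times i)(R_U^{-1})=L_U$, so $i$ is a uniform isomorphism of $(G,\mathcal L^{-1})$ onto $(G,\mathcal R)$ and of $(G,\mathcal R^{-1})$ onto $(G,\mathcal L)$ --- the observation recorded before the statement. The defining conditions of \emph{normally commuting} and \emph{normally $\pm$-subcommuting} are symmetric in the two quasi-uniformities and are expressed purely through composition, inversion and inclusion of entourages, hence are preserved by uniform isomorphisms; therefore, since by item (1) the pair $\{\mathcal L,\mathcal R\}$ has these properties, so does the pair $\{\mathcal L^{-1},\mathcal R^{-1}\}$. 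Finally, $i$ is a homeomorphism of $(G,\tau_{\mathcal L^{-1}})$ onto $(G,\tau_{\mathcal R})$ and of $(G,\tau_{\mathcal R^{-1}})$ onto $(G,\tau_{\mathcal L})$; since $\tau_{\mathcal L}=\tau_{\mathcal R}=\tau$ and $i$ is an involution, both $\tau_{\mathcal L^{-1}}$ and $\tau_{\mathcal R^{-1}}$ coincide with the image topology $\{i(W):W\in\tau\}$ (the inverse topology $\tau^{-1}$), so $\mathcal L^{-1}$ and $\mathcal R^{-1}$ generate the same topology on $G$. This proves item (2).

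For the Hausdorff part, if $\tau$ is Hausdorff then the final assertion of Proposition~\ref{p6.1}, applied to the topological monoid $G$ with open shifts, immediately gives that the quasi-Roelcke uniformity $\FU=\mathcal L\mathcal R^{-1}\vee\mathcal R\mathcal L^{-1}$ is separated. To obtain $3$-separatedness of $\mathcal L$ and $\mathcal R$ I would verify the hypotheses of Proposition~\ref{p3.5}: by item (1) the quasi-uniformities $\mathcal L,\mathcal R$ are $\pm$-subcommuting (being normally $\pm$-subcommuting is formally stronger) and generate the common Hausdorff topology $\tau$; by item (2) the quasi-uniformities $\mathcal L^{-1},\mathcal R^{-1}$ generate the same topology on $G$. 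Proposition~\ref{p3.5} then yields that $\mathcal L$ and $\mathcal R$ are $3$-separated.

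The proof is essentially bookkeeping on top of Propositions~\ref{p6.1} and \ref{p3.5}; the only place calling for genuine care is the middle step, namely pinning down the correct pairing of the reversed quasi-uniformities under inversion ($\mathcal L^{-1}\leftrightarrow\mathcal R$ and $\mathcal R^{-1}\leftrightarrow\mathcal L$, not $\mathcal L^{-1}\leftrightarrow\mathcal L$) and confirming that the two structural relations and the generated topologies transfer along precisely that pairing. No idea beyond the earlier sections is needed.
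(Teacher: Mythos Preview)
Your proposal is correct and follows essentially the same approach as the paper, which simply records the inversion observation and then states that the result follows from Propositions~\ref{p3.5} and \ref{p6.1}. Your write-up merely spells out the bookkeeping (open shifts, the pairing $\mathcal L^{-1}\leftrightarrow\mathcal R$, $\mathcal R^{-1}\leftrightarrow\mathcal L$ under $i\times i$, and the symmetry of the structural definitions) that the paper leaves implicit.
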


Next, we prove that a paratopological group endowed with the quasi-Roelcke uniformity is a uniform quasi-topological group.

\begin{definition} A {\em uniform quasi-topological group} is a group $G$ endowed with a uniformity $\U$ such that the inversion $G\to G$, $x\mapsto x^{-1}$, is uniformly continuous and for every $a,b\in G$ the shifts $s_{a,b}:G\to G$, $s_{a,b}:x\mapsto axb$, is uniformly continuous.
\end{definition}

\begin{proposition}\label{p7.3} Any paratopological group $G$ endowed with the quasi-Roelcke uniformity $\FU=\mathcal L\mathcal R^{-1}\vee\mathcal R\mathcal L^{-1}$ is a uniform quasi-topological group.
\end{proposition}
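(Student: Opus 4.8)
The plan is to verify the two requirements in the definition of a uniform quasi-topological group directly on the canonical base of $\FU$. First, $\FU$ genuinely is a uniformity: by Proposition~\ref{p7.1} the quasi-uniformities $\mathcal L$ and $\mathcal R$ are $\pm$-subcommuting, so Theorem~\ref{t3.3} applies and shows that $\FU=\mathcal L\mathcal R^{-1}\vee\mathcal R\mathcal L^{-1}$ is a uniformity. Throughout I shall use the base $\{W_U:U\in\mathcal N_e\}$ of $\FU$ consisting of the entourages $W_U=\{(x,y)\in G\times G:Ux\cap yU\ne\emptyset\ne Uy\cap xU\}$ exhibited in Section~\ref{s6}, and it suffices to check uniform continuity of each map on these basic entourages.

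\emph{Uniform continuity of the shifts.} Here I would use the standard fact that in a paratopological group every conjugation $c_a\colon x\mapsto a^{-1}xa$ is a homeomorphism fixing $e$ (being a composition of a left and a right translation), so that $a^{-1}Ua$ and $bUb^{-1}$ are again open neighborhoods of $e$ whenever $U\in\mathcal N_e$. Given $a,b\in G$ and a basic entourage $W_U$, put $V=a^{-1}Ua\cap bUb^{-1}\in\mathcal N_e$. From the identity $U(axb)\cap(ayb)U=a\big[(a^{-1}Ua)x\cap y(bUb^{-1})\big]b$ and its twin with $x$ and $y$ interchanged, together with the inclusions $Vx\subseteq(a^{-1}Ua)x$ and $yV\subseteq y(bUb^{-1})$, one reads off at once that $(x,y)\in W_V$ implies $(axb,ayb)\in W_U$; hence $(s_{a,b}\times s_{a,b})(W_V)\subseteq W_U$ and $s_{a,b}$ is uniformly continuous.

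\emph{Uniform continuity of the inversion.} This is the delicate point, and I expect it to be the main obstacle: a naive substitution $x\mapsto x^{-1}$ in the defining relation of $W_U$ runs into conjugation problems (from $vx=yw$ with $v,w\in U$ one only gets $x^{-1}v^{-1}=w^{-1}y^{-1}$, and $x^{-1}v^{-1}$ need not lie in $Ux^{-1}$), so one must argue on the level of the generating quasi-uniformities instead. Recall from the beginning of this section that $\iota\colon x\mapsto x^{-1}$ is a uniform homeomorphism of $(G,\mathcal L)$ onto $(G,\mathcal R^{-1})$ and of $(G,\mathcal R)$ onto $(G,\mathcal L^{-1})$; concretely $(\iota\times\iota)(L_U)=R_U^{-1}$ and $(\iota\times\iota)(R_U)=L_U^{-1}$ for every $U\in\mathcal N_e$. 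Since $\iota\times\iota$ is an involutive bijection of $G\times G$ that commutes with composition, inversion and intersection of entourages, it carries the quasi-uniformity $\mathcal L\mathcal R^{-1}$ onto $\mathcal R^{-1}\mathcal L$ and $\mathcal R\mathcal L^{-1}$ onto $\mathcal L^{-1}\mathcal R$, hence it carries $\FU$ onto $\mathcal R^{-1}\mathcal L\vee\mathcal L^{-1}\mathcal R$. The crux is then to identify this last uniformity with $\FU$: Proposition~\ref{p7.1} states that both $(\mathcal L,\mathcal R)$ and $(\mathcal L^{-1},\mathcal R^{-1})$ are $\pm$-subcommuting; the first pair gives $\mathcal L\mathcal R^{-1}\subseteq\mathcal R^{-1}\mathcal L$ and $\mathcal R\mathcal L^{-1}\subseteq\mathcal L^{-1}\mathcal R$, while the second pair, after cancelling inversions, gives $\mathcal R^{-1}\mathcal L\subseteq\mathcal L\mathcal R^{-1}$ and $\mathcal L^{-1}\mathcal R\subseteq\mathcal R\mathcal L^{-1}$. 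Therefore $\mathcal L\mathcal R^{-1}=\mathcal R^{-1}\mathcal L$ and $\mathcal R\mathcal L^{-1}=\mathcal L^{-1}\mathcal R$, so $\mathcal R^{-1}\mathcal L\vee\mathcal L^{-1}\mathcal R=\FU$ and $(\iota\times\iota)$ carries $\FU$ onto itself. In particular $\iota$ is a uniform self-homeomorphism of $(G,\FU)$, so it is uniformly continuous, which together with the previous paragraph completes the verification that $(G,\FU)$ is a uniform quasi-topological group.
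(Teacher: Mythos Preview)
Your proof is correct, but your treatment of the inversion is considerably more elaborate than necessary, and your diagnosis of the ``obstacle'' is mistaken. If you simply rewrite the basic entourage $W_U$ in the equivalent form
\[
W_U=\{(x,y)\in G\times G:\ y\in UxU^{-1}\cap U^{-1}xU\}
\]
(which is exactly how the paper phrases it in the Introduction), then the inversion-invariance is immediate: if $y=uxv^{-1}$ with $u,v\in U$ then $y^{-1}=vx^{-1}u^{-1}\in Ux^{-1}U^{-1}$, and if $y=u^{-1}xv$ then $y^{-1}=v^{-1}x^{-1}u\in U^{-1}x^{-1}U$. Hence $(x,y)\in W_U$ if and only if $(x^{-1},y^{-1})\in W_U$, and the inversion is a uniform self-homeomorphism of $(G,\FU)$ in one line. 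This is precisely the paper's argument. Your detour through $(\iota\times\iota)$ transporting $\FU$ to $\mathcal R^{-1}\mathcal L\vee\mathcal L^{-1}\mathcal R$ and then invoking the $\pm$-subcommutativity of both $(\mathcal L,\mathcal R)$ and $(\mathcal L^{-1},\mathcal R^{-1})$ to identify this with $\FU$ is valid, but it obscures the elementary symmetry of the basic entourages.

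For the shifts your argument is essentially the same as the paper's, with only cosmetic differences in how the conjugate neighborhoods are packaged: you set $V=a^{-1}Ua\cap bUb^{-1}$ and use the algebraic identity $U(axb)\cap(ayb)U=a\big[(a^{-1}Ua)x\cap y(bUb^{-1})\big]b$, whereas the paper chooses $U$ small enough that $aU\subset Va$, $Ub\subset bV$, $aU^{-1}\subset V^{-1}a$, $U^{-1}b\subset bV^{-1}$ and verifies $ayb\in V^{-1}(axb)V\cap V(axb)V^{-1}$. Both are the same idea.
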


\begin{proof} Observe that for any neighborhood $V\in\mathcal N_e$ and points $x,y\in G$ the inclusion $y\in VxV^{-1}\cap V^{-1}xV$ is equivalent to $y^{-1}\in Vx^{-1}V^{-1}\cap  V^{-1}x^{-1}V$, which implies that the inversion map $G\to G$, $x\mapsto x^{-1}$, is uniformly continuous.

Next, we show that for every $a,b\in G$ the shift $s_{a,b}:G\to G$, $s_{a,b}:x\mapsto axb$, is uniformly continuous. Fix any neighborhood $V\in\mathcal N_e$ of $e$. By the continuity of the shifts on $G$, there exists a neighborhood $U\subset V$ of $e$ such that $aU\subset Va$, $Ub\subset bV$, $Ua^{-1}\subset a^{-1}V$, and $b^{-1}U\subset Vb^{-1}$. Inverting the two latter inclusions, we get $aU^{-1}\subset V^{-1}a$ and $U^{-1}b\subset bV^{-1}$. Then for any points $x,y\in G$ with $y\in U^{-1}xU\cap UxU^{-1}$, we get
$ayb\in aU^{-1}xUb\cap aUxU^{-1}b\subset V^{-1}axbV\cap VaxbV^{-1}$, which  means that the shift $s_{a,b}$ is uniformly continuous.
\end{proof}

The following theorem is a partial case of Theorem~\ref{t6.2}.

\begin{theorem}\label{t7.4} Each Hausdorff paratopological group $G$ is functionally Hausdorff and has submetrizability number $sm(G)\le\psi(\FU)\le\chi(G)$ and $i$-weight $iw(G)\le \psi(\FU)\cdot\log(\ell(\FU))\le \chi(G)\cdot \log(\dc(G))$.
\end{theorem}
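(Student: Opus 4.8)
The plan is to obtain Theorem~\ref{t7.4} as an immediate specialization of Theorem~\ref{t6.2}, since a paratopological group is a particular instance of a topological monoid with open shifts. So the only genuine step is to check this inclusion of classes.

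First I would observe that a paratopological group $G$ is a topological monoid: the multiplication $G\times G\to G$ is continuous by definition, and the neutral element is a two-sided unit. Next I would check that $G$ has open shifts: for any $a,b\in G$ the two-sided shift $s_{a,b}\colon x\mapsto axb$ is a bijection whose inverse is $s_{a^{-1},b^{-1}}$, which is continuous as a composition of continuous shifts; hence $s_{a,b}$ is a homeomorphism, in particular an open map. Therefore $G$ satisfies the hypotheses of Theorem~\ref{t6.2}.

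Applying Theorem~\ref{t6.2} to the Hausdorff topological monoid $G$ with open shifts then gives at once that $G$ is functionally Hausdorff, that $sm(G)\le\psi(\FU)\le\chi(G)$, and that $iw(G)\le\psi(\FU)\cdot\log(\ell(\FU))\le\chi(G)\cdot\log(\dc(G))$, which is precisely the claim. All the substantive work lies behind Theorem~\ref{t6.2}: Proposition~\ref{p6.1} shows that the left and right quasi-uniformities $\mathcal L,\mathcal R$ of a monoid with open shifts are normally $\pm$-subcommuting (hence normal) and that Hausdorffness is equivalent to the separatedness of the quasi-Roelcke uniformity $\FU=\mathcal L\mathcal R^{-1}\vee\mathcal R\mathcal L^{-1}$, and then Theorem~\ref{t3.3} converts the pseudocharacter of $\FU$ into the stated bounds on $sm$ and $iw$ (using also $\chi(\mathcal L)\cdot\chi(\mathcal R)\le\chi(G)$, which holds because a neighborhood base at $e$ of size $\chi_e(G)\le\chi(G)$ induces bases of $\mathcal L$ and $\mathcal R$). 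Consequently there is no obstacle of its own in Theorem~\ref{t7.4}; the single point requiring attention is simply the verification that the shifts of a paratopological group are open.
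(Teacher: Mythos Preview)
Your proposal is correct and matches the paper's approach exactly: the paper simply states that Theorem~\ref{t7.4} is a partial case of Theorem~\ref{t6.2}, and you have supplied the verification (that a paratopological group is a topological monoid with open shifts) which the paper takes for granted, having already noted it in the discussion preceding Proposition~\ref{p6.1}.
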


In light of this theorem it is important to have upper bound on the pseudocharacter $\psi(\FU)$ of the quasi-Roelcke uniformity. Such upper bounds are given in the following theorem, which unifies or generalizes the results of \cite{IS} and \cite{LL}.

\begin{theorem}\label{t7.5} For any Hausdorff paratopological group $G$ its quasi-Roelcke uniformity $\FU=\mathcal L\mathcal R^{-1}\vee\mathcal R\mathcal L^{-1}$ has pseudocharacter
\begin{enumerate}
\item $\psi(\FU)\le \min\{\psi(\mathcal L\mathcal L^{-1})\cdot\ell(\mathcal L^{-1}),\psi(\mathcal L^{-1}\mathcal L)\cdot\ell(\mathcal L)\}\le
    \overline{\psi}(G)\cdot\ell^{\pm2}(G)\cdot \min\{\ell(\mathcal L),\ell(\mathcal L^{-1})\}\le\newline
    \overline{\psi}(G)\cdot \ell^{\pm2}(G)\cdot\min\{q\ell^{\pm1}(G), q\ell^{\mp1}(G)\}$;
\item $\psi(\FU)\le\psi(\mathcal L\mathcal L^{-1}\vee \mathcal L^{-1}\mathcal L)\cdot\ell(\mathcal L^{-1})\cdot\ell(\mathcal L)\le \psi(\mathcal L^{\vee 2})\cdot q\ell^{\mp1}(G)\cdot q\ell^{\pm1}(G)$;
\item $\psi(\FU)\le \psi(\mathcal L\mathcal L^{-1}\mathcal L)\cdot \ell(\mathcal L\mathcal L^{-1}\vee \mathcal L^{-1}\mathcal L)\le\psi(\mathcal L^{\pm3})\cdot q\ell^{\vee 2}(G)$.
\end{enumerate}
Moreover, if the quasi-uniformity $\mathcal L$ is
\begin{enumerate}
\item[(4)] $\mp4$-separated, then $\psi(\FU)\le\psi(\mathcal L^{-1}\mathcal L\mathcal L^{-1}\mathcal L)\cdot\ell(\mathcal L^{-1}\mathcal L)\cdot \ell^{\pm2}(X)\le \psi(\mathcal L^{\mp4})\cdot q\ell^{\mp2}(X)\cdot \ell^{\pm2}(G)$;
\item[(5)] $\pm6$-separated, then
$\psi(\FU)\le\overline{\psi}(\mathcal L\mathcal L^{-1}\mathcal L\mathcal L^{-1}\mathcal L)\cdot \ell^{\pm2}(G)=\overline{\psi}(\mathcal L^{\pm5})\cdot \ell^{\pm2}(G)$.
\end{enumerate}
\end{theorem}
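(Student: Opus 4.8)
The statement is a compendium: each of the nine estimates is obtained by feeding the structural facts about $G$ recorded in Proposition~\ref{p7.1} into the general bounds of Theorems~\ref{t4.4} and~\ref{t4.5} and Proposition~\ref{p5.1}, and then simplifying by a few identifications special to the group case. Proposition~\ref{p7.1} supplies every hypothesis one needs: $\mathcal L$ and $\mathcal R$ are normally commuting, normally $\pm$-subcommuting, normal and $3$-separated; $\mathcal L^{-1}$ and $\mathcal R^{-1}$ are normally commuting, normally $\pm$-subcommuting and generate one common topology $\tau^{-1}$ on $G$; and $\FU$ is separated (moreover $\tau^{-1}$ is Hausdorff, since $\mathcal L^{-1}$, being $3$-separated, is $\pm2$-separated). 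Two auxiliary remarks then carry everything. First, the inversion $x\mapsto x^{-1}$ is a uniform isomorphism of $(G,\mathcal L^{-1})$ with $(G,\mathcal R)$ and of $(G,\mathcal R^{-1})$ with $(G,\mathcal L)$; since $(\mathcal L,\mathcal R)$ and $(\mathcal L^{-1},\mathcal R^{-1})$ are both $\pm$-subcommuting we get $\mathcal L\mathcal R^{-1}=\mathcal R^{-1}\mathcal L$ and $\mathcal R\mathcal L^{-1}=\mathcal L^{-1}\mathcal R$, so this isomorphism fixes $\FU=\mathcal L\mathcal R^{-1}\vee\mathcal R\mathcal L^{-1}=\mathcal R^{-1}\mathcal L\vee\mathcal L^{-1}\mathcal R$ and converts any bound proved for $(\mathcal L,\mathcal R)$ into the corresponding bound for $(\mathcal L^{-1},\mathcal R^{-1})$. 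Second, the pre-uniformities $\mathcal L\mathcal L^{-1}$, $\mathcal L^{-1}\mathcal L$, $\mathcal L\mathcal L^{-1}\mathcal L$, $\mathcal L^{-1}\mathcal L\mathcal L^{-1}\mathcal L$ and $\mathcal L\mathcal L^{-1}\vee\mathcal L^{-1}\mathcal L$ coincide with the alternating powers $\mathcal L^{\pm2}$, $\mathcal L^{\mp2}$, $\mathcal L^{\pm3}$, $\mathcal L^{\mp4}$, $\mathcal L^{\vee2}$ (the balls of the basic entourage $L_V$ are $xV$, so $L_VL_V^{-1}=L_V^{\pm2}$ etc., and intersecting neighbourhoods of $e$ produces common refinements); and since $\mathcal L\subset\qU_G$ and $\mathcal L^{-1}\subset\qU_G^{\mp1}$, monotonicity of the boundedness number yields $\ell(\mathcal L)\le q\ell^{\pm1}(G)$, $\ell(\mathcal L^{-1})\le q\ell^{\mp1}(G)$, $\ell(\mathcal L^{-1}\mathcal L)\le q\ell^{\mp2}(G)$ and $\ell(\mathcal L\mathcal L^{-1}\vee\mathcal L^{-1}\mathcal L)\le q\ell^{\vee2}(G)$.

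Items (1)--(3) and~(5) now read off. For~(1): Theorem~\ref{t4.5}(2) gives $\psi(\FU)\le\psi(\mathcal L\mathcal L^{-1})\cdot\ell(\mathcal L^{-1})$, and Theorem~\ref{t4.5}(8), applicable by Proposition~\ref{p7.1}(2), gives $\psi(\FU)\le\psi(\mathcal L^{-1}\mathcal L)\cdot\ell(\mathcal L)$; since $\mathcal L\mathcal L^{-1}$ is left-invariant and $\mathcal L^{-1}\mathcal L$ is carried by the inversion isomorphism onto the right-invariant $\mathcal R\mathcal R^{-1}$, both are homogeneous, so $\psi$ equals $\dot\psi$ for them, and Proposition~\ref{p5.1} bounds both by $\overline{\psi}(G)\cdot\ell^{\pm2}(G)$; now take the minimum and insert the last inequalities of the previous paragraph. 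Item~(2) is Theorem~\ref{t4.5}(9) (again via Proposition~\ref{p7.1}(2)) together with $\mathcal L\mathcal L^{-1}\vee\mathcal L^{-1}\mathcal L=\mathcal L^{\vee2}$. Item~(3) is Theorem~\ref{t4.5}(7) — whose hypotheses (normally commuting, $3$-separated) hold by Proposition~\ref{p7.1} — together with $\mathcal L\mathcal L^{-1}\mathcal L=\mathcal L^{\pm3}$. Item~(5) is exactly Theorem~\ref{t4.5}(6) applied to $X=G$, whose topology is generated by the normally $\pm$-subcommuting pair $\mathcal L,\mathcal R$.

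The remaining item~(4) is the one that does not just read off, and is where the real work sits. The closest general statement, Theorem~\ref{t4.5}(5), requires $\mathcal L$ to be $\mp5$-separated and only yields the closed pseudocharacter $\overline{\psi}(\mathcal L^{\mp4})$; for~(4) one must improve both the hypothesis and the conclusion. The plan is to re-run the argument of Theorem~\ref{t4.4}(4) with the pre-uniformity $\A=\mathcal L^{-1}\mathcal L$, so that $\A^{-1}\A\mathcal L=\mathcal L^{\mp4}=\mathcal L^{-1}\mathcal L\mathcal L^{-1}\mathcal L$ and $\ell(\A)=\ell(\mathcal L^{-1}\mathcal L)$, but now exploiting the explicit form of the entourages in a paratopological group: every $\mathcal L$-ball is a left translate $xV$ of a fixed $V\in\mathcal N_e$, its closure is $x\overline{V}$ with $\overline{V}\subseteq VV^{-1}$, and by Proposition~\ref{p7.1}(1) the quasi-uniformity $\mathcal L$ is normal, so the normality- and closure-expansions used in that proof can be performed inside $\mathcal L$ without inserting an extra composition factor. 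This trimming is meant to let one (i) verify the hypothesis $\bigcap\overline{\A^{-1}\A\mathcal L}=\Delta_G$ from $\mp4$-separation of $\mathcal L$ instead of $\mp5$, and (ii) collapse $\overline{\psi}(\mathcal L^{\mp4})$ to $\psi(\mathcal L^{\mp4})$; then $\ell(\mathcal L^{-1}\mathcal L)\le q\ell^{\mp2}(G)$ finishes~(4). I expect this last point — pinning down exactly how much padding the closures of the composite balls $\overline{B(x;\A^{-1}\A\mathcal L^{k})}$ genuinely require once one works inside a Hausdorff paratopological group, so that neither the $\mp5$-separation nor the passage to closures in the pseudocharacter is forced — to be the only non-routine step; everything else is assembly of the results proved earlier.
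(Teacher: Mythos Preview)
Your treatment of items (1)--(3) and (5) is correct and matches the paper's proof essentially line for line: the paper also invokes Theorem~\ref{t4.5}(2),(7),(9),(6) together with Proposition~\ref{p5.1} and Proposition~\ref{p7.1}, and for the second half of (1) it applies Theorem~\ref{t4.5}(2) to the swapped pair $(\mathcal R,\mathcal L)$ and then uses the inversion isomorphism to rewrite $\psi(\mathcal R\mathcal R^{-1})\cdot\ell(\mathcal R^{-1})$ as $\psi(\mathcal L^{-1}\mathcal L)\cdot\ell(\mathcal L)$, which is equivalent to your appeal to Theorem~\ref{t4.5}(8).

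For item (4) you have the right plan---apply Theorem~\ref{t4.4}(4) with $\A=\mathcal L^{-1}\mathcal L$ and argue that in a paratopological group $\overline{\psi}(\mathcal L^{\mp4})=\psi(\mathcal L^{\mp4})$---but the specific closure estimate you write down points in the wrong direction. You note $\overline{V}\subset VV^{-1}$, which tacks the extra factor onto the \emph{right}: applied to $A=U^{-1}UU^{-1}U$ this gives $\overline{A}\subset U^{-1}UU^{-1}UU^{-1}$, a $\mp5$ word, and one is back to needing $\mp5$-separation. The paper instead uses the closure from the right quasi-uniformity: since the neighborhoods $Uy$ also form a base at $y$, one has $\overline{A}\subset U^{-1}A$ for every $U\in\mathcal N_e$. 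This puts the extra $U^{-1}$ on the \emph{left}, where it merges with the leading $U^{-1}$: $\overline{U^{-1}UU^{-1}U}\subset U^{-2}UU^{-1}U$. After first shrinking the witnessing family (replace each $U$ by a smaller one so that $\bigcap U^{-2}UU^{-1}U=\{e\}$ still holds, at no cost in cardinality), this immediately yields $\bigcap\overline{U^{-1}UU^{-1}U}=\{e\}$ and hence $\overline{\psi}(\mathcal L^{\mp4})\le\psi(\mathcal L^{\mp4})$. With this one-line correction your sketch becomes a complete proof of (4), identical to the paper's.
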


\begin{proof} 1. The inequality $\psi(\FU)\le\psi(\mathcal L\mathcal L^{-1})\cdot\ell(\mathcal L^{-1})$ follows from Theorem~\ref{t4.5}(2), which also implies
$\psi(\FU)\le\psi(\mathcal R\mathcal R^{-1})\cdot\ell(\mathcal R^{-1})=\psi(\mathcal L^{-1}\mathcal L)\cdot\ell(\mathcal L)$. By Proposition~\ref{p5.1}, $\psi(\mathcal L\mathcal L^{-1})=\dot\psi(\mathcal L\mathcal L^{-1})\le\overline{\psi}(G)\cdot\ell^{\pm2}(G)$ and $\psi(\mathcal L^{-1}\mathcal L)=\psi(\mathcal R\mathcal R^{-1})=\dot\psi(\mathcal R\mathcal R^{-1})\le\overline{\psi}(G)\cdot\ell^{\pm2}(G)$, which implies
$$\min\{\psi(\mathcal L\mathcal L^{-1})\cdot\ell(\mathcal L^{-1}),\psi(\mathcal L^{-1}\mathcal L)\cdot\ell(\mathcal L)\}\le\overline{\psi}(G)\cdot\ell^{\pm2}(G)\cdot \min\{\ell(\mathcal L),\ell(\mathcal L^{-1})\}.$$
\smallskip

2, 3. The upper bounds from the second and third items follow from Theorem~\ref{t4.5}(9,7) and Proposition~\ref{p7.1}.
\smallskip

4. Assume that the quasi-uniformity $\mathcal L$ is $\mp 4$-separated. Then we can choose a subfamily $\U\subset \mathcal N_e$ of cardinality $|\U|=\psi(\mathcal L^{-1}\mathcal L\mathcal L^{-1}\mathcal L)$ such that $\bigcap_{U\in\U}U^{-1}UU^{-1}U=\{e\}$. Replacing every $U$ by a smaller neighborhood of $e$, we can assume that $\bigcap_{U\in\U}U^{-2}UU^{-1}U=\{e\}$. Since $\overline{U^{-1}UU^{-1}U}\subset U^{-1}(U^{-1}UU^{-1}U)$, we conclude that $\bigcap_{U\in\U}\overline{U^{-1}UU^{-1}U}=\{e\}$ and $\overline{\psi}(\mathcal L^{-1}\mathcal L\mathcal L^{-1}\mathcal L)\le|\U|=\psi(\mathcal L^{-1}\mathcal L\mathcal L^{-1}\mathcal L)$. Applying Theorem~\ref{t4.4}(4) to the pre-uniformity $\A=\mathcal L^{-1}\mathcal L$, we get the upper bound
$$\psi(\FU)\le\overline{\psi}(\A^{-1}\A\U)\cdot \ell(\A)\cdot\ell^{\pm2}(G)=\overline{\psi}(\mathcal L^{-1}\mathcal L\mathcal L^{-1}\mathcal L\mathcal L)\cdot\ell(\mathcal L^{-1}\mathcal L)\cdot\ell^{\pm2}(G)=\psi(\mathcal L^{-1}\mathcal L\mathcal L^{-1}\mathcal L)\cdot \ell(\mathcal L^{-1}\mathcal L)\cdot\ell^{\pm2}(G).$$

5. The fifth item follows from Theorem~\ref{t4.5}(6).
\end{proof}

\section{Two counterexamples}\label{s8}

In this section we construct two examples of paratopological groups that have some rather unexpected properties.

\subsection{A paratopological group with countable pseudocharacter which is not submetrizable}
In Theorem~\ref{t7.5}(1) we proved that for each Hausdorff paratopological group $G$ its quasi-Roelcke uniformity has pseudocharacter $\psi(\FU)\le\overline{\psi}(G)\cdot\ell^{\pm2}(G)\cdot \min\{\ell(\mathcal L),\ell(\mathcal L^{-1}\}$. It is natural to ask if this upper bound can be improved to $\psi(\FU)\le\overline{\psi}(G)$. In this section we show that this inequality is not true in general.
Namely, we present an example of a zero-dimensional (and hence) Hausdorff abelian paratopological group which has countable pseudocharacter but is not submetrizable. Some properties of this group can be proved only under Martin Axiom \cite{Weiss}, whose topological equivalent says that each countably cellular compact Hausdorff space is $\kappa$-Baire for every cardinal $\kappa<\mathfrak c$. We say that a topological space $X$ is {\em $\kappa$-Baire} if for any family $\U$ consisting of $\kappa$ many open dense subsets of $X$ the intersection $\bigcap\U$ is dense in $X$. Under Martin's Axiom for $\sigma$-centered posets, each separable compact Hausdorff space is $\kappa$-Baire for every cardinal $\kappa<\mathfrak c$. This implies that under Martin's Axiom (for $\sigma$-centered posets) the space $\IZ^\kappa$ endowed with the Tychonoff product topology is $\kappa$-Baire for every cardinal $\kappa<\mathfrak c$. Here $\mathfrak c$ stands for the cardinality of continuum. In the statement (4) of the following theorem by $\mathfrak d$ we denote the cofinality the partially ordered set $(\IN^\w,\le)$.
It is known \cite{Vaugh} that $\w_1\le\mathfrak d\le\mathfrak c$ and $\mathfrak d=\mathfrak c$ under Martin's Axiom (for countable posets).

Let $\kappa$ be an uncountable cardinal. On the group $\IZ^\kappa$ of all functions $g:\kappa\to\IZ$ consider the shift-invariant topology $\tau_\uparrow$ whose neighborhood base at the zero function $e:\kappa\to\IZ$ consists of the sets
$$W_{F,m}=\big\{g\in \IZ^\kappa:g|F=0,\;g(\kappa)\subset \{0\}\cup [m,\infty)\big\}$$where $m\in\IN$ and $F$ runs over finite subsets of $\kappa$. The group $\IZ^\kappa$ endowed with the topology $\tau_\uparrow$ is a paratopological group, denoted by ${\uparrow}\IZ^\kappa$.  Since the group ${\uparrow}\IZ^\kappa$ is abelian, the fours standard uniformities of ${\uparrow}\IZ^\kappa$ coincide (i.e., $\mathcal L=\mathcal R=\mathcal L\vee\mathcal R=\mathcal R\mathcal L$) whereas the quasi-Roelcke uniformity $\FU$ coincides with the pre-uniformities $\mathcal L\mathcal L^{-1}$ and $\mathcal R\mathcal R^{-1}$.

\begin{theorem}\label{t8.1} For any uncountable cardinal $\kappa$ the paratopological group $G={\uparrow}\IZ^\kappa$ has the following properties:
\begin{enumerate}
\item $G$ is a zero-dimensional (and hence regular) Hausdorff abelian paratopological group;
\item the topology on $G$ induced by the quasi-Roelcke uniformity $\FU$ coincides with the Tychonoff product topology $\tau$ on $\IZ^\kappa$;
\item $\psi(\FU)=\chi(G)=\kappa$ but $\psi(G)=\overline{\psi}(G)=\w$;
\item $\ell(\FU)=\w$ but $\ell(\mathcal L)\ge\mathfrak d>\w$;
\item $c(G)\ge\kappa$ but $\dc(G)=\w$;
\item $iw(G)\cdot\w=sm(G)\cdot \w\ge\log(2^\kappa)$.
\item If $2^\kappa>\mathfrak c$, then $G$ is not submetrizable.
\item If the space $\IZ^\kappa$ is $\kappa$-Baire, then $G$ fails to have $G_\delta$-diagonal and hence is not submetrizable.
    \end{enumerate}
\end{theorem}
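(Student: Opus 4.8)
The plan is to unwind the neighbourhood base $\{W_{F,m}\}_{F\in[\kappa]^{<\w},\,m\in\IN}$ of $e$ and feed each consequence into the machinery of Sections~1--4, handling the eight claims in the order $(1),(2);(3),(4);(5),(6),(7);(8)$. First, for \emph{(1)} and \emph{(2)}: since the sum of two members of $\{0\}\cup[m,\infty)$ again lies in $\{0\}\cup[m,\infty)$, one has $W_{F,m}+W_{F,m}\subset W_{F,m}$, so $G$ is a (visibly abelian) paratopological group; each $W_{F,m}$ is open, being a sub-semigroup containing $e$, and also closed, since a point $h\notin W_{F,m}$ has a coordinate $\alpha$ at which it is forbidden and then the neighbourhood $h+W_{\{\alpha\},1}$ pins the coordinate $\alpha$ to $h(\alpha)$ and misses $W_{F,m}$. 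Thus $G$ has a clopen base, and, being $T_1$, it is zero-dimensional and regular. For \emph{(2)}, abelianness gives $\mathcal L=\mathcal R$ and $\FU=\mathcal L\mathcal L^{-1}$; the $\FU$-ball $B(x;L_{F,m}L_{F,m}^{-1})=x+(W_{F,m}-W_{F,m})$, and off $F$ the difference set of $\{0\}\cup[m,\infty)$ is all of $\IZ$, so $W_{F,m}-W_{F,m}=\{g:g|F=0\}$; these are exactly the basic open sets of the Tychonoff product topology on $\IZ^\kappa$, so $\tau_\FU$ is the product topology.

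Next, \emph{(3)} and \emph{(4)}. As $G$ is homogeneous, $\chi(G)=\chi_e(G)\le|[\kappa]^{<\w}\times\IN|=\kappa$; conversely a base of size $<\kappa$ would omit some coordinate $\alpha$ from all its defining finite sets, and no basic set sits inside $W_{\{\alpha\},1}$, so $\chi(G)=\kappa$. Similarly $\psi(\FU)=\kappa$: by (2) the entourages $E_F$ with $B(x;E_F)=\{y:y|F=x|F\}$ are cofinal in $\FU$, and $\bigcap_{F\in\mathcal F}E_F=\Delta_G$ iff $\bigcup\mathcal F=\kappa$, which needs $|\mathcal F|\ge\kappa$. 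But $\bigcap_{m}W_{\emptyset,m}=\{e\}$ (no nonzero integer lies in $\{0\}\cup[m,\infty)$ for every $m$) and the $W_{\emptyset,m}$ are clopen, so $\psi(G)=\overline\psi(G)=\w$; both are $\ge\w$ since $G$ is not discrete. For \emph{(4)}: for each $E_F$ the countably many functions $F\to\IZ$ realise $B(A;E_F)=G$, and $|A|=\w$ is forced when $F\ne\emptyset$, so $\ell(\FU)=\w$; whereas the entourage $L$ with $B(x;L)=\{y:y\ge x\}$ coming from $W_{\emptyset,1}=\IN_0^\kappa$ has $B(A;L)=G$ only if, on a fixed copy of $\w$ in $\kappa$, the set $\{-a|_\w:a\in A\}$ dominates $\IN^\w$ pointwise, whence $|A|\ge\mathfrak d$; thus $\ell(\mathcal L)\ge\mathfrak d>\w$.

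For \emph{(5)} I would show $c(G)\ge\kappa$ via the pairwise disjoint family $V_\alpha=\{g:g(\alpha)=-1\}\cap\{g:g(\gamma)\ge0\ \forall\gamma<\alpha\}$, $\alpha<\kappa$, each $V_\alpha$ being open because sets of the form $\{g:g\ge f\}$ are open by the computation in (4), and nonempty since $\mathbf 1$ equal to $-1$ at $\alpha$ and $0$ elsewhere lies in $V_\alpha$; and I would derive $\dc(G)=\w$ from $G$ being quasi-regular, so $\dc(G)=\bar l^{*1\frac12}(G)$ by Proposition~\ref{p1.2}(1), together with an argument that no disjoint family of open sets of $G$ can be made discrete (here the presence of the coarser product topology $\tau_\FU$, which is ccc, should be exploited). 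Items \emph{(6)} and \emph{(7)} are then immediate from Proposition~\ref{p1.3}: since $|G|=|\IZ^\kappa|=2^\kappa$ and $\dc(G)=\w$, we get $iw(G)\cdot\w=sm(G)\cdot\log(\w)=sm(G)\cdot\w$ and $2^\kappa=|G|\le\dc(G)^{\w\cdot sm(G)}\le 2^{\w\cdot sm(G)}$, i.e. $\log(2^\kappa)\le sm(G)\cdot\w$; and if $sm(G)\le\w$ then $|G|\le\dc(G)^\w=\w^\w=\mathfrak c$, so $2^\kappa>\mathfrak c$ precludes submetrizability.

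Finally \emph{(8)}. One assumes $\Delta(G)\le\w$, so by Lemma~\ref{l2.2} there are neighbourhood assignments $(N_n)_{n\in\w}$ with $\bigcap_n\St(x,\{B(y;N_n)\}_y)=\{x\}$ for all $x$; shrink each $B(y;N_n)$ to a basic subneighbourhood $y+W_{F_n(y),m_n(y)}$. Tracking, at the point $e$, which functions $j\mathbf 1_\alpha$ (value $j\ge1$ at $\alpha$, $0$ elsewhere) lie in $\St(e,\{B(y;N_n)\}_y)$, the one-sided shape $\{0\}\cup[m,\infty)$ of the neighbourhoods shows this holds as soon as, for every $n$, there is a witness $y\le 0$ with $\alpha\notin F_n(y)$ and $m_n(y)\le j$; I would encode, for each $n$ and $k\in\IN$, the condition ``there is such a witness with parameter $\le k$ whose finitely many pinned coordinates avoid the coordinate in question'' as a dense open subset of $\IZ^\kappa$, and use the $\kappa$-Baire property of $\IZ^\kappa$ to find a point meeting $\kappa$ many of them (one per coordinate), producing a coordinate $\alpha$ and an integer $j\ge1$ with $j\mathbf 1_\alpha\in\bigcap_n\St(e,\{B(y;N_n)\}_y)\setminus\{e\}$ — a contradiction; non-submetrizability follows since $\Delta(G)\le sm(G)$. \textbf{The main obstacle} is precisely (8): checking the density of the sets fed to the $\kappa$-Baire hypothesis and extracting the final witness $(\alpha,j)$. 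A related subtlety lies in (5): because the one-sided neighbourhoods produce $\kappa$-sized \emph{disjoint} families of open sets so easily, establishing $\dc(G)=\w$ — i.e. that no such family is \emph{discrete} — is the part of that item needing the most care.
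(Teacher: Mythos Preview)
Items (1)--(4), (6), (7), and the inequality $c(G)\ge\kappa$ in (5) are handled essentially as in the paper (your disjoint family $V_\alpha$ is a harmless variant of the paper's $U_x=\delta_x+W_{\{x\},2}$). The two places you flag as delicate are exactly where your proposal does not yet constitute a proof.

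\textbf{Item (5), $\dc(G)=\w$.} Your suggestion to ``exploit that the coarser product topology $\tau_{\FU}$ is ccc'' does not go through: $\tau_\uparrow$-open sets typically have empty $\tau$-interior (e.g.\ $W_{\emptyset,1}=\{g:g\ge 0\}$ is nowhere dense in the product topology), so neither cellularity nor discreteness of families transfers downward. The paper instead proves $\ell^{\pm4}(G)=\w$ directly (which equals $\dc(G)$ by Proposition~\ref{p1.8}): given a neighbourhood assignment $V$, one takes a maximal $VV^{-1}$-separated set $C$; assuming $C$ is uncountable, one applies the $\Delta$-system lemma to the finite supports $F_x$ of the basic neighbourhoods $x+W_{F_x,m_x}\subset B(x;V)$, then pigeonholes on $m_x$ and on $x|K$ (where $K$ is the kernel), and finally uses the ordinary Baire property of the product cylinder $\IZ^\kappa(f)$ to locate two points $x,y\in C$ and a third point $z$ with $B(x;VV^{-1})\cap B(y;VV^{-1})\ni z$, contradicting maximality. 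The combinatorial core is the $\Delta$-system step; nothing about the ccc of $\tau$ is used.

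\textbf{Item (8).} Your encoding is not well-posed: the ``coordinate in question'' $\alpha$ is a coordinate of $\kappa$, not a point of $\IZ^\kappa$, so it is unclear what dense open subsets of $\IZ^\kappa$ you intend to intersect, and the sets $\{y:\alpha\notin F_n(y),\;m_n(y)\le k\}$ are not open for an arbitrary neighbourhood assignment. The paper avoids neighbourhood assignments altogether: by Gruenhage's characterisation one takes a sequence $(\U_n)$ of clopen covers separating points, and then \emph{inductively} builds finite sets $F_0\subset F_1\subset\cdots$ and functions $f_n\in\IZ^{F_n}$ so that each cylinder $\IZ^\kappa(f_n)=\{g:g|F_n=f_n\}$ lies inside a single member $U_n\in\U_n$. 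At each stage the $\kappa$-Baire hypothesis is used once: the previous cylinder is partitioned into $\kappa$ many pieces $Z_{(F,f,m)}=\{g:(F_g,g|F_g,m_g)=(F,f,m)\}$ according to the basic neighbourhood $g+W_{F_g,m_g}\subset U_g\in\U_n$ chosen for each $g$, and one of these pieces is somewhere dense, which pins down the next cylinder. Two distinct points in $\bigcap_n\IZ^\kappa(f_n)$ then violate separation. This inductive ``cylinder-trapping'' is the missing idea in your sketch.
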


\begin{proof} 1. It is clear that the topology $\tau_\uparrow$ on ${\uparrow}\IZ^\kappa$ is stronger than the Tychonoff product topology $\tau$ on $\IZ^\kappa$. This implies that the paratopological group $G={\uparrow}\IZ^\kappa$ is Hausdorff.
Observing that each basic neighborhood $W_{F,m}$ of the zero function $e\in\IZ^\kappa$ is $\tau$-closed, we conclude that it is $\tau_\uparrow$-closed, which implies that the space  ${\uparrow}\IZ^\kappa$ is  zero-dimensional and hence regular.
\smallskip

2. Observe that for every basic neighborhood $W_{F,m}$ of zero, the set $W_{F,m}-W_{F,m}$ coincides with the basic neighborhood $W_F=\{g\in\IZ^\kappa:g|F=0\}$ of zero in the Tychonoff product topology $\tau$. This implies that $\tau$ coincides with the topology induced by the quasi-Roelcke uniformity $\FU$.
\smallskip

3. The equality $\chi(G)=\kappa=\psi(\FU)$ easily follows from the definition of the topology $\tau_\uparrow$ and the fact that the quasi-Roelcke uniformity $\FU$ generates the Tychonoff product topology on $\IZ^\kappa$. To see that $\psi(G)=\overline{\psi}(G)=\w$, observe that $\bigcap_{m\in\IN}W_{\emptyset,m}=\{e\}$.
\smallskip

4. To see that $\ell(\FU)=\w$, take any basic open neighborhood $W_{F,m}$ of zero in the group $G$ and observe that $\IZ^F=\{g\in \IZ^\kappa:g|\kappa\setminus F=0\}$ is a countable subgroup of $G$ such that $G=\IZ^F+(W_{F,m}-W_{F,m})$, which implies that $\ell(\FU)\le\w$.  On the other hand, the boundedness number $\ell(\mathcal L)$ of the left quasi-uniformity on the paratopological group ${\uparrow}\IZ^\kappa$ is equal to the cofinality of the partially ordered set $(\IN^\kappa,\le)$ which is not smaller that $\mathfrak d$, the cofinality of the partially ordered set $(\IN^\w,\le)$.
\smallskip

5. For every $x\in\kappa$ denote by $\delta_x:\kappa\to\{0,1\}\subset\IZ$ the characteristic function of the singleton $\{x\}$ and let $U_x=\delta_x+W_{\{x\},2}$ be a basic neighborhood of $\delta_x$. We claim that for any distinct points $x,y\in\kappa$ the sets $U_x$ and $U_y$ are disjoint.
To derive a contradiction, assume that $U_x\cap U_y$ contains some function $f\in\IZ^\kappa$. The inclusion $f\in U_x$ implies that $f(x)=\delta_x(x)=1$. On the other hand, $f\in U_y$ implies $f(x)\in\{\delta_y(x)\}\cup[\delta_y(x)+2,\infty)=\{0\}\cup[2,\infty)\not\ni 1$. So, the closed-and-open sets $U_x$, $x\in\kappa$, are pairwise disjoint and hence $c(G)\ge |\{U_x\}_{x\in\kappa}|=\kappa$.

By Proposition~\ref{p1.8}, $\dc(G)=\ell^{\pm4}(G)$. So, it suffices to prove that $\ell^{\pm4}(G)=\w$. Given a neighborhood assignment $V$ on $G$, we need to find a countable subset $C\subset G$ such that $B(C;VV^{-1}VV^{-1})=G$. Using Zorn's Lemma, find a maximal subset $C\subset G$ such that $B(x;VV^{-1})\cap B(y;VV^{-1})=\emptyset$ for any distinct points $x,y\in C$. By the maximality of $C$, for every $x\in G$ there is a point $c\in C$ such that $B(c;VV^{-1})\cap B(x;VV^{-1})\ne\emptyset$, which implies $x\in B(C;VV^{-1}VV^{-1})$ and hence $X=B(C;VV^{-1}VV^{-1})$. It remains to prove that the set $C$ is countable. To derive a contradiction, assume that $C$ is uncountable. For every $x\in G$ find a finite subset $F_x\subset\kappa$ and a positive number $m_x\in\IN$ such that $x+W_{F_x,m_x}\subset B(x;V)$. By the $\Delta$-system Lemma \cite[16.1]{JW2}, the uncountable set $C$ contains an uncountable subset $D\subset C$ such that the family $(F_x)_{x\in D}$ is a $\Delta$-system with kernel $K$, which means that $F_x\cap F_y=K$ for any distinct points $x,y\in D$. For every $n\in\IN$ and $f\in\IZ^K$ consider the subset $D_{n,f}=\{x\in D:x|K=f,\;m_x\le n,\;\sup_{\alpha\in F_x}|x(\alpha)|\le n\}$ of $D$ and observe that $D=\bigcup_{n\in\IN}\bigcup_{f\in\IZ^K}D_{n,f}$. By the Pigeonhole Principle, for some $n\in\IN$ and $f\in\IZ^K$ the set $D_{n,f}$ is uncountable. Consider the clopen subset $\IZ^\kappa(f)=\{x\in\IZ^\kappa:x|K=f\}$ of $\IZ^\kappa$.
Since $\IZ^\kappa(f)$ is a Baire space, for some $m\in\IN$ the set $X_m=\{x\in\IZ^{\kappa}(f):m_x=m\}$ is not nowhere dense in $\IZ^\kappa(f)$.  Consequently, there is a finite subset $\bar K\subset\kappa$ containing $K$ and a
function $\bar f:\bar K\to\IZ$ such that the set $X_m\cap \IZ^\kappa(\bar f)$ is dense in $\IZ^\kappa(\bar f)=\{x\in \IZ^\kappa:x|\bar K=\bar f\}$. Since the family $(F_x\setminus K)_{x\in D}$ is disjoint, the set $\{x\in D:(F_x\setminus K)\cap\bar K\ne\emptyset\}$ is finite, so we can find two functions $x,y\in D_{n,f}$ such that $(F_x\cup F_y)\cap \bar K=K$.
Put $\tilde K=F_x\cup F_y\cup K$ and choose any function $\tilde f:\tilde K\to\IZ$ such that $\tilde f|\bar K=\bar f$ and $f(\alpha)<-n-m$ for any $\alpha\in \tilde K\setminus\bar K$. The function $\tilde f$ determines a non-empty open set $\IZ^\kappa(\tilde f)=\{z\in\IZ^\kappa:z|\tilde K=\tilde f\}$, which contains some function $z\in X_m$ (by the density of $X_m\cap\IZ^\kappa(\bar f)$ in $\IZ^\kappa(\bar f)$). Choose a function $\tilde z\in\IZ^\kappa$ such that $\tilde z|F_x=x|F_x$ and $\tilde z(\alpha)\ge \max\{m+z(\alpha),m_x+x(\alpha)\}$ for every $\alpha\in\kappa\setminus F_x$. Then $\tilde z\in (z+W_{F_z,m})\cap (x+W_{F_x,m_x})\subset B(z;V)\cap B(x;V)$, which implies $z\in B(x;VV^{-1})$. By analogy we can prove that $z\in B(y;VV^{-1})$. So, $B(x;VV^{-1})\cap B(y;VV^{-1})\ne\emptyset$, which contradicts the choice of the set $C\ni x,y$. This contradiction shows that $C$ is countable and hence $\dc(G)=\ell^{\pm4}(G)=\w$.
\smallskip

6. By Proposition~\ref{p1.3}, $iw(G)\cdot\w=sm(G)\cdot\log(\dc(G))=sm(G)\cdot\w$. On the other hand, $2^\kappa=|G|\le |[0,1]^{iw(G)}|=|2^{iw(G)\cdot\w}|$ implies that
$\log(2^\kappa)\le iw(G)\cdot\w$.
\smallskip

7. If $2^\kappa>\mathfrak c$, then $sm(G)\cdot\w\ge\log(2^\kappa)\ge \log(\mathfrak c^+)>\w$, which implies that $sm(G)>\w$ and hence $G$ is not submetrizable.
\smallskip

8. Suppose that the space $\IZ^\kappa$ is $\kappa$-Baire. Assuming that the space $G={\uparrow}\IZ^\kappa$ has $G_\delta$-diagonal, we can apply Theorem 2.2 in \cite{Grue} and find a countable family $(\U_n)_{n\in\IN}$ open covers of $G$, which separates the points of $G$ in the sense that for every distinct points $f,g\in G$ there is $n\in\IN$ such that no set $U\in\U_n$ contains both points $f$ and $g$. Since the space $G$ is zero-dimensional, we can assume that each set $U\in\bigcup_{n\in\w}\U_n$  is closed-and-open in $G$. Put $\U_0=\{G\}$.

We shall construct an increasing sequence $(F_n)_{n\in\w}$ of finite subsets and a sequence $f_n\in \IZ^{F_n}$, $n\in\w$, of functions such that for every $n\in\w$ the clopen set $\IZ^{\kappa}(f_n)=\{f\in\IZ^\kappa:f|F_n=f_n\}$ is contained in $U_n\cap\IZ^\kappa(f_{n-1})$ for some set $U_n\in\U_n$.

We start the inductive construction letting $F_0=\emptyset$ and $f_0:\emptyset\to\IZ$ be the unique function. Then $\IZ^\kappa(f_0)=\IZ^\kappa\in\U_0$. Assume that for some $n\in\IZ$ we have defined a finite set $F_{n-1}\subset\kappa$ and a function $f_{n-1}\in\IZ^{F_{n-1}}$ such that $\IZ^\kappa(f_{n-1})\subset U_{n-1}$ for some $U_{n-1}\in\U_{n-1}$.

The $\F$ be the family of all triples $(F,f,m)$ where $F$ is a finite subset of $\kappa$ containing $F_{n-1}$, $f:F\to\IZ$ is a function extending the function $f_{n-1}$ and $m\in\IN$ is a positive integer. Observe that $|\F|=\kappa$. For every function $g\in {\uparrow}\IZ^{\kappa}$ choose a closed-and-open subset $U_g\in\U_n$ containing $g$ and choose a finite subset $F_g\subset\kappa$ containing $F_{n-1}$ and a number $m_g$ such that $g+W_{F_g,m_g}\subset U_g$. For every triple $(F,f,m)\in\F$ consider the subset $Z_{(F,f,m)}=\{g\in{\uparrow}\IZ^\kappa:(F_g,g|F_g,m_g)=(F,f,m)\}$ and observe that $\IZ^\kappa(f_{n-1})=\bigcup_{(F,f,m)\in\F}Z_{F,f,m}$. Since the space $\IZ^\kappa(f_{n-1})$ is $\kappa$-Baire, there is a triple $(F,f,m)\in\F$ such that the set $Z_{(F,f,m)}$ is not nowhere dense in $\IZ^\kappa(f_{n-1})$. Consequently we can find a finite set $F_n\subset \kappa$ and a function $f_n\in\IZ^{F_n}$ such that for the basic open set $\IZ^\kappa(f_n)=\{g\in \IZ^\kappa:g|F_n=f_n\}$ the intersection $\IZ^\kappa(f_n)\cap Z_{(F,f,m)}$ is dense in $\IZ^\kappa(f_n)$. It follows that $F_n\supset F\supset F_{n-1}$ and $f_n|F=f$. Choose any point $g\in Z_{(F,f,m)}\cap \IZ^\kappa(f_n)$.

We claim that $\IZ^\kappa(f_n)\subset U_g\in\U$. Assuming that $\IZ^\kappa(f_n)\not\subset U_g$, choose a function $h\in \IZ^\kappa(f_n)\setminus U_g$ and find a basic neighborhood $h+W_{E,l}\subset \IZ^{\kappa}(f_n)\setminus U_g$ of $h$. It follows from the inclusion $h+W_{E,l}\subset \IZ^{\kappa}(f_n)$ that $E\supset F_n\supset F$ and $h|F_n=f_n$. Then $h|F=f_n|F=f$. Choose a function $\tilde h:\kappa\to\IZ$ such that $\tilde h|E=h|E$ and $\tilde h(x)\ge \max\{g(x)+m,h(x)+l\}$ for every $x\in\kappa\setminus E$. Then $\tilde h\in (h+W_{E,l})\cap (g+W_{F,m})\subset (\IZ^\kappa(f_n)\setminus U_g)\cap U_g=\emptyset$, which is a desired contradiction completing the inductive step.

After completing the inductive construction, consider the countable set $F_\w=\bigcup_{n\in\w}F_n$ and the function $f_\w:F_\w\to\IZ$ such that $f_\w|F_n=f_n$ for all $n\in\w$. Since the complement $\kappa\setminus F_\w$ is not empty, the ``cube'' $\IZ^\kappa(f_\w)=\{g\in \IZ^\kappa:g|Z_\w=f_\w\}$ contains two distinct functions $f,g$. By the choice of the family $(\U_n)_{n\in\w}$ there is a number $n\in\w$ such that no set $U\in \U_n$ contains both points $f$ and $g$. On the other hand, by the inductive construction, $f,g\in\IZ^{\kappa}(f_\w)\subset \IZ^\kappa(f_n)\subset U_n$ for some set $U_n\in\U$, which is a desired contradiction completing the proof of the theorem.
\end{proof}

\begin{corollary}\label{c8.2} For every cardinal $\kappa\ge\mathfrak c$ the paratopological group ${\uparrow}\IZ^{\mathfrak \kappa}$ has countable pseudocharacter but fails to be submetrizable.
\end{corollary}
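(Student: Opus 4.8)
The plan is to derive both halves of the corollary directly from Theorem~\ref{t8.1}, specialized to the given cardinal $\kappa$. Since $\kappa\ge\mathfrak c>\w$, the cardinal $\kappa$ is uncountable, so the paratopological group $G={\uparrow}\IZ^{\kappa}$ is well-defined and Theorem~\ref{t8.1} applies to it. In particular, statement~(3) of that theorem gives $\psi(G)=\overline{\psi}(G)=\w$, which is precisely the assertion that $G$ has countable pseudocharacter. So the first half of the corollary is immediate.

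For the second half I would invoke statement~(7) of Theorem~\ref{t8.1}, which asserts that $G$ is not submetrizable provided $2^\kappa>\mathfrak c$. It therefore suffices to verify this cardinal inequality under the hypothesis $\kappa\ge\mathfrak c$. By monotonicity of cardinal exponentiation we have $2^\kappa\ge 2^{\mathfrak c}$, and by Cantor's theorem $2^{\mathfrak c}>\mathfrak c$; combining these, $2^\kappa>\mathfrak c$. Hence Theorem~\ref{t8.1}(7) shows that $G$ is not submetrizable. (Alternatively one may argue from Theorem~\ref{t8.1}(6): if $G$ were submetrizable, then $sm(G)\le\w$, so $\log(2^\kappa)\le sm(G)\cdot\w=\w$ and therefore $2^\kappa\le 2^\w=\mathfrak c$, contradicting $2^\kappa>\mathfrak c$.)

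I do not anticipate a genuine obstacle here: the proof is a short deduction from Theorem~\ref{t8.1} together with the elementary cardinal arithmetic $2^\kappa\ge 2^{\mathfrak c}>\mathfrak c$. The one point worth flagging is that the statement should be a theorem of ZFC, so one must go through routes (6)/(7) of Theorem~\ref{t8.1} rather than route~(8), which relies on $\IZ^\kappa$ being $\kappa$-Baire and hence on an additional set-theoretic assumption such as Martin's Axiom. Since the inequality $2^\kappa>\mathfrak c$ holds outright whenever $\kappa\ge\mathfrak c$, no such extra hypothesis is needed and the corollary holds in ZFC.
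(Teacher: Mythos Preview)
Your proof is correct and matches the paper's intended argument: the corollary is stated immediately after Theorem~\ref{t8.1} without a separate proof, and your deduction from parts~(3) and~(7) via the elementary inequality $2^\kappa\ge 2^{\mathfrak c}>\mathfrak c$ is exactly what is implicit there. Your remark that one must avoid route~(8) to keep the result in ZFC is also well taken.
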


It is known \cite{Weiss} that under Martin's Axiom the space $\IZ^{\kappa}$ is $\kappa$-Baire for every cardinal $\kappa<\mathfrak c$. This fact combined with Theorem~\ref{example}(7,8) implies the following MA-improvement of Corollary~\ref{c8.2}.

\begin{corollary} Under Martin's Axiom, for any uncountable cardinal $\kappa$ the paratopological group ${\uparrow}\IZ^{\kappa}$ has countable pseudocharacter but fails to be submetrizable.
\end{corollary}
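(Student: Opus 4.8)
The plan is to separate the uncountable cardinals $\kappa$ into those that are $\ge\mathfrak c$ and those that are $<\mathfrak c$, and in each case invoke the relevant part of Theorem~\ref{t8.1}. The statement about the pseudocharacter carries over verbatim: Theorem~\ref{t8.1}(3) gives $\psi(G)=\overline\psi(G)=\w$ for every uncountable $\kappa$ with no set-theoretic hypothesis, so ${\uparrow}\IZ^\kappa$ always has countable pseudocharacter, and only the failure of submetrizability needs attention.

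For $\kappa\ge\mathfrak c$ I would observe that $2^\kappa\ge 2^{\mathfrak c}>\mathfrak c$ by Cantor's theorem, so $2^\kappa>\mathfrak c$ and Theorem~\ref{t8.1}(7) (equivalently, Corollary~\ref{c8.2}) shows that $G$ is not submetrizable; this part needs no extra axiom. The genuinely MA-dependent case is $\w<\kappa<\mathfrak c$: here I would use the recorded fact (following \cite{Weiss}) that, under Martin's Axiom, the product space $\IZ^\kappa$ is $\kappa$-Baire for every $\kappa<\mathfrak c$. Theorem~\ref{t8.1}(8) then applies and yields that $G={\uparrow}\IZ^\kappa$ fails to have a $G_\delta$-diagonal. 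Since a submetrizable space admits a continuous injective map into a metric space and metric spaces have $G_\delta$-diagonals, having a $G_\delta$-diagonal is a necessary condition for submetrizability; hence $G$ is not submetrizable. Combining the two cases finishes the proof.

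I do not anticipate any real obstacle: all of the work has already been carried out in Theorem~\ref{t8.1} and in the Baire-category consequence of Martin's Axiom cited above. The only point to watch is that the case division is exhaustive for all uncountable $\kappa$, which it is, $\mathfrak c$ splitting the uncountable cardinals precisely into those below it (handled via the $\kappa$-Baire property of $\IZ^\kappa$ and Theorem~\ref{t8.1}(8)) and those at least $\mathfrak c$ (handled via the cardinality bound $2^\kappa=|G|$ and Theorem~\ref{t8.1}(6,7)).
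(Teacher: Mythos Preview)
Your proposal is correct and matches the paper's own justification: the paper simply notes that under Martin's Axiom $\IZ^\kappa$ is $\kappa$-Baire for every $\kappa<\mathfrak c$ and then invokes Theorem~\ref{t8.1}(7,8), which is exactly your case split into $\kappa\ge\mathfrak c$ (item~(7)) and $\w<\kappa<\mathfrak c$ (item~(8)).
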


\begin{problem} Can the space ${\uparrow}\IZ^{\w_1}$ be submetrizable in some model of ZFC?
\end{problem}

In Theorem~\ref{t8.1} we proved that the paratopological group $G={\uparrow}\IZ^\kappa$ has $d(G)\ge c(G)\ge\kappa$ and $\dc(G)=\w$, By Propositions~\ref{p1.2} and \ref{p1.8},  $\ell^{\pm4}(G)=\bar l^{*\kern-1pt1\kern-1pt\frac12}(G)=\dc(G)=\w$. It would be interesting to know the values of some other cardinal characteristics of $G$, intermediate between $\dc(G)$ and $c(G)$.

\begin{problem} For the paratopological group $G={\uparrow}\IZ^\kappa$ calculate the values of cardinal characteristics $\ell^{\pm n}(G)$, $\ell^{\mp n}(G)$, $\ell^{\wedge n}(G)$, $\ell^{\vee n}(G)$ for all $n\in \IN$.
\end{problem}

\subsection{A submetrizable paratopological group whose quasi-Roelcke uniformity has uncountable pseudocharacter}

By Theorem~\ref{t7.4}, each Hausdorff paratopological group $G$ has submetrizability number $sm(G)\le \psi(\FU)$. This inequality can be strict as shown by an example constructed in this subsection.

Given an uncountable cardinal $\kappa$ in the paratopological group ${\uparrow}\IZ^\kappa$ consider the subgroup $H=\{f\in{\uparrow}\IZ^\kappa:|\supp(f)|<\w\}$ consisting of  functions $f:\kappa\to\IZ$ that have finite support $\supp(f)=\{\alpha\in\kappa:f(\alpha)\ne 0\}$. A neighborhood base of $H$ at zero consists of the sets
$$W_{F,m}=\{h\in H:h|F=0,\;h(\kappa)\in\{0\}\cup[m,\infty)\}$$where $F$ runs over finite subsets of $\kappa$ and $m\in\IN$.

\begin{theorem}\label{example} For any uncountable cardinal $\kappa$ the paratopological group $H$ has the following properties:
\begin{enumerate}
\item $H$ is a zero-dimensional (and hence regular) Hausdorff abelian paratopological group;
\item $H$ is strongly $\sigma$-discrete and submetrizable;
\item $iw(H)\cdot\w=\log(\kappa)$;
\item $\psi(\FU)=\chi(H)=\kappa$ but $\psi(H)=\overline{\psi}(H)=\w$;
\item $\ell(\FU)=\w$ but $\ell(\mathcal L)=\dc(H)=\kappa$.
    \end{enumerate}
\end{theorem}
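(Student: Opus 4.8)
The plan is to establish the five assertions in the order (1), (2), (4), (3), (5), deducing (3) from the submetrizability proved in (2) via Proposition~\ref{p1.3}, and extracting the equality $\dc(H)=\kappa$ (needed in (3) and (5)) from the strong $\sigma$-discreteness proved in (2). Throughout I will use that $H$ is a subgroup of the abelian paratopological group ${\uparrow}\IZ^\kappa$, so $\mathcal L=\mathcal R$ and the quasi-Roelcke uniformity $\FU=\mathcal L\mathcal R^{-1}\vee\mathcal R\mathcal L^{-1}$ equals $\mathcal L\mathcal L^{-1}$; since $W_{F,m}-W_{F,m}=\{h\in H:h|F=0\}$, the uniformity $\FU$ has base $\{E_F:F\in[\kappa]^{<\w}\}$ with $E_F=\{(x,y)\in H\times H:x|F=y|F\}$, so $\tau_\FU$ is exactly the product topology of $\IZ^\kappa$ restricted to $H$.

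For (1) I would note that each $W_{F,m}$, being the intersection of $\{h\in H:h|F=0\}$ with the product-closed sets $\{h:h(\alpha)\in\{0\}\cup[m,\infty)\}$, is closed in the product topology, hence $\tau$-closed, while it is $\tau$-open as a basic neighbourhood of $0$; thus $H$ has a clopen base and is zero-dimensional, and $H$ is Hausdorff because $\tau$ refines the (Hausdorff) product topology. The heart of the argument is (2): write $H=\bigcup_{n\in\w}H_n$ with $H_n=\{f\in H:|\supp f|\le n\ \text{and}\ |f(\alpha)|\le n\ \text{for all}\ \alpha\}$, and for $f\in H_n$ put $U_f=f+W_{\supp f,\,n+1}$. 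I would then show that $(U_f)_{f\in H_n}$ is a discrete family: given $z\in H$ with $S=\supp z$, pick $N\in\IN$ larger than $n$ and than every $|z(\alpha)|$; if $w\in(z+W_{S,N})\cap U_f$ for some $f\in H_n$, then $w$ agrees with $z$ on $S$ and lands in $\{0\}\cup[N,\infty)$ off $S$, while $w$ agrees with $f$ on $\supp f$ (where every value has modulus at most $n$) and lands in $\{0\}\cup[n+1,\infty)$ off $\supp f$; comparing these forces $\supp f\subseteq S$ and then pins down $f$ completely from $z$ and $n$ (namely $f(\alpha)=z(\alpha)$ if $\alpha\in S$ and $|z(\alpha)|\le n$, and $f(\alpha)=0$ otherwise), so $z+W_{S,N}$ meets at most one $U_f$. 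Hence every $H_n$ is strongly discrete, $H$ is strongly $\sigma$-discrete, and Proposition~\ref{p1.1} gives that $H$ is (zero-dimensional and) submetrizable. As a by-product, the subfamily $(U_{\delta_\alpha})_{\alpha\in\kappa}$ of the discrete family $(U_f)_{f\in H_1}$ is a discrete family of $\kappa$ pairwise distinct nonempty open sets, so $\dc(H)\ge\kappa$, and since $\dc(H)\le|H|=\kappa$ we get $\dc(H)=\kappa$. This discreteness verification is the step I expect to be the main obstacle; the remaining items are essentially bookkeeping.

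For (4): the equalities $\psi(H)=\overline\psi(H)=\w$ follow from $\bigcap_{m\in\IN}W_{\emptyset,m}=\{0\}$ (the $W_{\emptyset,m}$ being closed), together with $\psi(H)\ge\w$ since $H$ has no isolated points (every $W_{F,m}$ contains $m\delta_\alpha\ne 0$ for $\alpha\notin F$). For $\chi(H)=\psi(\FU)=\kappa$ the upper bounds come from $|[\kappa]^{<\w}|=\kappa$; for the lower bounds I would observe that any local base $\mathcal B$ at $0$ (respectively any $\mathcal V\subseteq\FU$ with $\bigcap\mathcal V=\Delta_H$) assigns to each of its members a finite set of ``controlled'' coordinates, so if $|\mathcal B|<\kappa$ (respectively $|\mathcal V|<\kappa$) one can choose $\alpha$ outside the union of all these finite sets and check that $\delta_\alpha$ cannot be separated from $0$ — a contradiction. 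For (3): $H$ is submetrizable, so $sm(H)\le\w$, and since $H$ is infinite and functionally Hausdorff Proposition~\ref{p1.3} yields $iw(H)\cdot\w=sm(H)\cdot\log(\dc(H))=\w\cdot\log(\kappa)=\log(\kappa)$, using that $\log(\kappa)\ge\w$ because $\kappa$ is uncountable.

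For (5): $\ell(\FU)=\w$ because for each $E_F$ the countable set of functions supported in $F$ is $E_F$-dense (and no finite set is, as $|\IZ^F|=\w$ for $F$ finite); $\dc(H)=\kappa$ was obtained above; and $\ell(\mathcal L)=\kappa$ because $\ell(\mathcal L)\le|H|=\kappa$ while, conversely, if $A+W_{F,m}=H$ then testing the functions $-k\delta_\alpha$ for $\alpha\in\kappa\setminus F$ and all $k\in\IN$ shows that every such $\alpha$ lies in $\supp a$ for some $a\in A$, whence $\kappa\setminus F\subseteq\bigcup_{a\in A}\supp a$ and therefore $|A|\ge\kappa$ (each support being finite). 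This gives $\ell(\mathcal L)=\dc(H)=\kappa$ and completes the proof.
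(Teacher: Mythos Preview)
Your proof is correct and follows essentially the same route as the paper: the same clopen basic sets for (1), the same strongly $\sigma$-discrete decomposition (you index by a single $n$ bounding both $|\supp f|$ and $\|f\|$, the paper by a pair $(n,m)$, but the neighbourhoods $f+W_{\supp f,\,n+1}$ and the discreteness verification are the same), and the same appeal to Proposition~\ref{p1.1}/\ref{p1.3} for submetrizability and $i$-weight. One point worth noting: for item~(5) the paper simply says ``by analogy with'' Theorem~\ref{t8.1}, but in the full group ${\uparrow}\IZ^\kappa$ one has $\dc=\omega$ and only $\ell(\mathcal L)\ge\mathfrak d$, so the analogy does not directly yield $\dc(H)=\ell(\mathcal L)=\kappa$; your explicit arguments (extracting $\dc(H)\ge\kappa$ from the discrete family $(U_{\delta_\alpha})_{\alpha<\kappa}$, and bounding $\ell(\mathcal L)\ge\kappa$ via the test functions $-k\delta_\alpha$) are exactly what is needed to complete this step.
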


\begin{proof} The items (1), (4), (5) follow (or can be proved by analogy with) the corresponding items of Theorem~\ref{t8.1}.

(2)--(3): To see that the space $H$ is strongly $\sigma$-discrete, write $H$ as $H=\bigcup_{n,m\in\w}H_{n,m}$ where  $H_{n,m}=\{h\in{\uparrow}\IZ^\kappa:|\supp(h)|=n,\;\|h\|\le m\}$ and $\|h\|=\sup_{\alpha\in\kappa}|h(\alpha)|$. We claim that each set $H_{n,m}$ is strongly discrete in $H$. To each function $h\in H_{n,m}$ assign the neighborhood $U_h=h+W_{\supp(h),m+1}$. Given any two distinct functions $g,h\in H_{n,m}$, we shall prove that $U_g\cap U_h=\emptyset$.  Assuming that $U_g\cap U_h$ contains some function $f\in H$, we would conclude that $f|\supp(g)=g|\supp(g)$ and $f|\supp(h)=h|\supp(h)$. So, $g|\supp(g)\cap \supp(h)=h|\supp(g)\cap\supp(h)$ and $g\ne h$ implies that $\supp(g)\ne\supp(h)$. Since $|\supp(g)|=|\supp(h)|=n$, there is $\alpha\in \supp(g)\setminus \supp(h)$ such that $g(\alpha)\ne 0=h(\alpha)$. Then $f(\alpha)\in\{g(\alpha)\}\cap [m+1,\infty)\subset[-m,m]\cap [m+1,\infty)=\emptyset$, which is a contradiction showing that the indexed family $(U_h)_{h\in H_{n,m}}$ is disjoint.

To show that this family $(U_h)_{h\in H_{n,m}}$ is discrete, for every function $g\in H\setminus \bigcup_{h\in H_{n,m}}U_h$ consider its neighborhood $U_g=g+W_{\supp(g),m+1}$. We claim that $U_g\cap U_h=\emptyset$ for every $h\in H_{n,m}$. Assume conversely that for some $h\in H_{n,m}$ the intersection $U_g\cap U_h$ contains a function $f\in H$. Then $f|\supp(g)=g|\supp(g)$ and $f|\supp(h)=h|\supp(h)$, which implies $\supp(g)\ne \supp(h)$. If $\supp(h)\setminus\supp(g)\ne\emptyset$, then we can find $\alpha\in\supp(h)\setminus\supp(g)$ and conclude that $f(\alpha)=h(\alpha)\ne 0=g(\alpha)$ and hence $f(\alpha)\in\{h(\alpha)\}\in [-m,m]\cap [m+1,\infty)=\emptyset$, which is a contradiction. So, $\supp(h)\subset \supp(g)$ and $g|\supp(h)=h|\supp(h)$. It follows from $g\notin U_h$ that for some $\alpha\in\kappa\setminus\supp(h)$ we get $g(\alpha)\notin \{0\}\cup[m+1,\infty)$. Then $\alpha\in\supp(g)$ and $f(\alpha)=g(\alpha)\notin [m+1,\infty)$. On the other hand, the inclusion $f\in U_h$ and $f(\alpha)\ne 0=h(\alpha)$ implies $f(\alpha)\in [m+1,\infty)$. This contradiction completes the proof of the equality $U_g\cap U_h=\emptyset$, which shows that the family $(U_h)_{h\in H_n}$ is discrete in $H$ and the set $H_{n,m}$ is strongly discrete in $H$.
Then the space $H=\bigcup_{n,m\in\w}H_{n,m}$ is strongly $\sigma$-discrete. By Proposition~\ref{p1.1} it is submetrizable and has $i$-weight $iw(H)\cdot\w=\log(|H|)=\log(\kappa)$.
\end{proof}

\end{document}